\documentclass[12pt]{article}
\usepackage{amsmath, amsthm, amssymb, color}
\usepackage[colorlinks=true,linkcolor=blue,urlcolor=blue]{hyperref}
\usepackage{graphicx}
\usepackage{caption}
\usepackage{mathtools}
\usepackage{enumerate}
\usepackage{verbatim}
\usepackage{mathrsfs}
\usepackage{booktabs}
\usepackage{tikz,tikz-cd,tikz-3dplot}
\usepackage{amssymb}
\usetikzlibrary{matrix}
\usetikzlibrary{arrows}
\usetikzlibrary{positioning}
\usepackage{algorithm}
\usepackage[noend]{algpseudocode}
\usepackage{caption}
\usepackage{titlesec}
\usepackage[normalem]{ulem}
\usepackage{subcaption}
\oddsidemargin \evensidemargin
\usepackage{makecell}
\usepackage{array}

\newtheorem{theorem}{Theorem}
\newtheorem{proposition}[theorem]{Proposition}
\newtheorem{lemma}[theorem]{Lemma}
\newtheorem{corollary}[theorem]{Corollary}

\theoremstyle{definition}
\newtheorem{definition}[theorem]{Definition}

\newtheorem{remark}[theorem]{Remark}
\newtheorem{conjecture}[theorem]{Conjecture}

\newtheorem{example}[theorem]{Example}

\numberwithin{theorem}{section}

\usepackage{thm-restate}

\newcommand{\PP}{\mathbb{P}}

\newcommand{\CC}{\mathbb{C}}

\newcommand{\NN}{\mathbb{N}}

\newcommand{\rank}{\mathrm{rank}}

\newcommand{\V}{\mathcal{V}}

\titleformat{\subsection}[runin]
  {\normalfont\normalsize\bfseries}
  {\thesubsection}
  {1em}
  {}[.]

\titlespacing*{\subsection}
  {0pt}{\baselineskip}{1em}

\title{Equations of Tree Tensor Network Varieties}
\author{Serkan Ho\c{s}ten, Niharika Chakrabarty Paul, Otto T.~P.~Schmidt, Dmitry Skurt}
\date{}

\begin{document}

\maketitle

\begin{abstract}
We show that tree tensor network varieties, including tensor train varieties, are general Markov models associated to spaced trees. This allows us to prove that the prime ideals of these varieties are generated by minors  of  matrix flattenings. In the case of tensor train varieties, we discuss whether these minors form a Gr\"obner basis and provide a combinatorial method to compute the degree for order $3$ tensor trains.
\end{abstract}

\section{Introduction}
Tree tensor networks (TTN), or hierarchical Tucker decompositions, were first introduced 
in \cite{HK09} to represent high-order tensors. The representation depends on a combinatorial tree $\mathscr{T}$ which encodes the decomposition of a tensor via a sequential application of singular value or QR decompositions. Due to the general applicability of structured tensor decomposition, TTN play a significant role in various areas of physics and computer science. They have been utilized in the simulation of time-evolution of spin systems \cite{krinitsin2025}, in machine learning for computer vision \cite{Cheng_2019} and to describe holographic duality in AdS/CFT \cite{Hayden_2016}. Especially in the area of quantum many-body physics these networks have gained popularity for enabling the description of two-dimensional systems while allowing a unique isometry center in the network \cite{humpert2025, krinitsin2025}.

Two instances of TTN stand out in practical applications: subspace (or Tucker) representations and tensor trains (or matrix product states). The former was already recognized as a TTN in \cite{HK09} and the latter was introduced in \cite{Oseledets_2011}. We will approach general TTN and these special cases with a view from algebraic geometry; see \cite{BUCZYNSKA_2015} for binary TTN, \cite{Breiding_2024}, \cite{Landsberg2012}, and \cite{Ye2018} for subspace representations, and \cite{BUCZYNSKA_2015} and \cite{BFHP25} for tensor trains from this perspective. 

Besides the tree $\mathscr{T}$, the data which determines a TTN includes a sequence $\mathbf{V} = (V_\lambda)_{\lambda \in \mathscr{L}}$ of complex vector spaces attached to the set of leaves $\mathscr{L}$ of $\mathscr{T}$ and a sequence of positive integers
$\mathbf{r} = (r_v)_{v \in \mathscr{T}\setminus \{\rho\}}$ associated to every non-root vertex of $\mathscr{T}$ (Definition \ref{def:subspace_def}). The set of all tensors in $\PP(\bigotimes_{\lambda \in \mathscr{L}} V_\lambda)$ determined by $(\mathscr{T}, \mathbf{V}, \mathbf{r})$ constitutes a projective algebraic variety $\mathrm{TTN}_{\mathscr{T}, \mathbf{V}, \mathbf{r}}$ which we call a \emph{tree tensor network variety}. In particular, $\mathrm{TTN}_{\mathscr{T}, \mathbf{V}, \mathbf{r}}$ is the set of all tensors $[\psi] \in \PP(\bigotimes_{\lambda \in \mathscr{L}} V_\lambda)$ with 
$\rank(\psi^{(v)}) \leq r_v$ for all $v \in \mathscr{T} \setminus \{\rho\}$ where $\psi^{(v)}$ is a matrix flattening of $\psi$; see Proposition \ref{prop:TTN_set_theoretic}. Hence, $\mathrm{TTN}_{\mathscr{T}, \mathbf{V}, \mathbf{r}}$ is the common zero set of minors of size $r_v+1$ of $\psi^{(v)}$. The most natural question is: \emph{do these minors generate the prime ideal of $\mathrm{TTN}_{\mathscr{T}, \mathbf{V}, \mathbf{r}}$, i.e., do they define the tree tensor network variety scheme-theoretically and not just set-theoretically?} 
We answer this question to the positive. 

\begin{restatable}{theorem}{main}\label{thm: main}
   For a fixed tree $\mathscr{T}$, the vector spaces $\mathbf{V} = (V_\lambda)_{\lambda \in \mathscr{L}}$, and a rank sequence $\mathbf{r} = (r_v)_{v \in \mathscr{T}\setminus\{\rho\}}$, let $\psi \in \bigotimes_{\lambda \in \mathscr{L}} V_\lambda$ be a  tensor of indeterminates and let $\psi^{(v)}$ for $v \in \mathscr{T}\setminus \{\rho\}$ be the flattenings as in \eqref{eq:flattening}. Then the homogeneous prime ideal of $\mathrm{TTN}_{\mathscr{T},\mathbf{V}, \mathbf{r}}$ in $\mathbb{C}[\psi]$ is
\[
\mathcal I(\mathrm{TTN}_{\mathscr{T},\mathbf{V}, \mathbf{r}})
= \sum_{v \in \mathscr{T}\setminus \{\rho\}} \Big\langle (r_{v} + 1)\textrm{-minors of }\psi^{(v)} \Big\rangle.
\]
\end{restatable}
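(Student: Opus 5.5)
The plan is to recast $\mathrm{TTN}_{\mathscr{T},\mathbf{V},\mathbf{r}}$ as a general Markov model on a suitable tree and then invoke the known description of the ideals of such models. The abstract tells us that TTN varieties are general Markov models associated to spaced trees. For those models there is a theorem of Allman--Rhodes type: the ideal of a general Markov model on a tree, with hidden states at the interior nodes of prescribed sizes, is generated by the $(r+1)$-minors of the edge flattenings. This is the Allman--Rhodes description of the ideal as generated by edge-flattening minors, with Draisma--Kuttler handling the general case. Since the proposition already gives the set-theoretic statement, the work lies in the ideal-theoretic one, and the route through Markov models supplies exactly that.

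\textbf{Step 1: the tree.} We build from $(\mathscr{T},\mathbf{V},\mathbf{r})$ a tree $\mathscr{T}'$, the spaced tree. Each non-root vertex $v$ becomes a hidden node of state-space dimension $r_v$. Each leaf $\lambda$ becomes an observed node with state space $V_\lambda$. We subdivide or attach edges so that every edge of $\mathscr{T}'$ induces a leaf bipartition equal to one of the flattenings $\psi^{(v)}$. The bound on each hidden node is governed by the rank $r_v$ of the corresponding edge.

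\textbf{Step 2: equality of varieties.} We show that the closure of the image of the Markov parametrization equals $\mathrm{TTN}_{\mathscr{T},\mathbf{V},\mathbf{r}}$. The inclusion of the Markov image in TTN comes from contracting the transition matrices along the hierarchical Tucker structure. For the reverse inclusion, a TTN tensor is a contraction of the Tucker cores at the interior vertices. We absorb each core into a root distribution together with edge transition matrices; this is done by introducing a new node and edge for each core, which is what the spacing achieves. Genericity of the parameters and Zariski closure take care of the normalizations that Markov models usually impose (stochasticity, positivity), since over $\CC$ and projectively these drop out.

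\textbf{Step 3: transferring the ideal.} We apply the Markov-model theorem to $\mathscr{T}'$. Its generators are the $(r_e+1)$-minors of the edge flattenings of $\mathscr{T}'$. Each such edge flattening coincides with some $\psi^{(v)}$, with $r_e = r_v$. Edges whose bound exceeds the size of the matrix, such as leaf edges where $r_e \ge \dim V_\lambda$, contribute no minors. Primality is automatic, because the variety is the closure of the image of an irreducible parameter space.

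\textbf{Main obstacle.} The hardest step is matching hypotheses exactly. The Markov-model ideal theorems typically require binary or trivalent trees and may need hidden state sizes bounded by the neighboring dimensions. TTN trees can have high-degree vertices and arbitrary ranks $r_v$. To deal with this, I would first reduce to the case where the $r_v$ are feasible, replacing each $r_v$ by the minimum of $r_v$ and the two sides of its flattening. This leaves both the variety and the minor ideal unchanged. I would then check that the version of the theorem being invoked (Draisma--Kuttler) covers arbitrary trees with arbitrary hidden state sizes, which I believe it does via the flattening-minor statement for general Markov models.
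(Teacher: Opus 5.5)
Your outer structure matches the paper's: subdivide each edge of $\mathscr{T}$ by a rank node, identify $\mathrm{TTN}_{\mathscr{T},\mathbf{V},\mathbf{r}}$ with $\mathcal{V}_T$, then apply Draisma--Kuttler. But Step~3 invokes a theorem that does not exist in the form you state. Draisma--Kuttler (Theorem~\ref{thm:ideal}) does not say that the ideal of a general Markov model is generated by edge-flattening minors. It says
$\mathcal{I}(\mathcal{V}_T)=\sum_{q\in\mathrm{int}(T)}\mathcal{I}(\mathcal{V}_{\mathfrak{s}_q(T)})$.
At a rank node $h_v$ (degree two) the summand is indeed generated by the $(r_v+1)$-minors of $\psi^{(v)}$. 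At a core vertex $v$ of degree $D_v\ge 3$, however, it is the ideal of the secant variety $\sigma_{n_v}$ of the Segre product of the $\mathbb{P}(K_C)$, $C\in\mathcal{C}(v)$, and that ideal is in general not generated by flattening minors. The edge-flattening description you attribute to Allman--Rhodes holds only when these star summands vanish, e.g.\ for two states on binary trees.

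For TTN these summands do not vanish. To make Step~2 work, the core node at $v$ must carry a space large enough that a generic core in $\mathcal{A}_v$ is a sum of $n_v$ pure tensors, e.g.\ $n_v=r_v\prod_{u\in\mathrm{ch}(v)}r_u$ as in the paper. A node of size $r_v$ there would force every core to have tensor rank at most $r_v$. Take the tensor train with $\mathbf{d}=(4,4,4)$ and $r_1=r_2=1$. The middle core node has $n_{v_2}=4$, and its summand is $\mathcal{I}(\sigma_4(\mathbb{P}^3\times\mathbb{P}^3\times\mathbb{P}^3))\neq 0$, since this secant variety has dimension at most $39<63$. Yet no flattening of a $4\times4\times4$ tensor has a single $5$-minor. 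So your ``main obstacle'' is misdiagnosed: Draisma--Kuttler needs no binarity or rank-feasibility hypotheses, and the real work is disposing of these non-determinantal summands.

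What is missing is the inclusion $\mathcal{I}(\mathcal{V}_{\mathfrak{s}_v(T)})\subseteq\sum_{i=1}^{D_v}\mathcal{I}(\mathcal{V}_{\mathfrak{s}_{h_i}(T)})$ for every core vertex $v$ with adjacent rank nodes $h_1,\dots,h_{D_v}$. The paper obtains it in two steps.

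First, set-theoretically: if each flattening $C_i\mid\text{rest}$ of $S$ has rank at most $r_i$, then $S\in\bigotimes_i U_i$ with $\dim U_i\le r_i$. Hence $S$ has tensor rank at most $\prod_i r_i=n_v$. So the intersection of the rank-node varieties lies in the secant variety, and the Nullstellensatz places the secant ideal inside the \emph{radical} of the sum of the determinantal ideals.

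Second, that sum of determinantal ideals (the ideal of a subspace variety) is prime, hence radical. The paper proves this by applying Draisma--Kuttler to an auxiliary star with rank nodes $\mathbb{C}^{r_i}$ and a central node of dimension $m\ge\prod_i d_i$, whose secant summand is zero.

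Without this primeness input you recover only the radical of the minor ideal, i.e.\ the set-theoretic statement of Proposition~\ref{prop:TTN_set_theoretic} you started from.
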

We note that this result is known for the special case of subspace representations \cite{landsberg2007}, and in the case of tensor trains it appeared as a conjecture \cite[Conjecture 5.10]{BFHP25}. Our proof will come in two steps. For the first step, we will consider a different algebraic variety that is defined on a tree, namely, the general Markov model $\V_T$ on the spaced tree $T$ \cite{DK09}. Here, the spaced tree $T$ refers to a combinatorial tree, as well as the vector and matrix spaces attached to all of its vertices and edges, respectively. Given a TTN defined by $(\mathscr{T}, \mathbf{V}, \mathbf{r})$, we construct the corresponding spaced tree $T$ and prove the following. 

\begin{restatable}{theorem}{TTNisSpacedTree} \label{thm:TTN=Spaced_tree}
The tree tensor network variety $\mathrm{TTN}_{\mathscr{T}, \mathbf{V}, \mathbf{r}}$ is equal to the general Markov model $\V_T$ on the spaced tree $T$ constructed in the preamble of Section \ref{sec:TTN_are_spaced_trees}.
\end{restatable}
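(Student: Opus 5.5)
We will prove the equality $\mathrm{TTN}_{\mathscr{T},\mathbf{V},\mathbf{r}}=\V_T$ by comparing parametrizations, establishing the two inclusions separately. Recall the set-up. The TTN variety is the closure of the image of the hierarchical map. This map assigns to each non-root vertex $v$ a subspace $U_v$ of dimension $r_v$. For a leaf $v$ we have $U_v\subseteq V_v$; for an internal vertex $v$ with children $c_1,\dots,c_k$ we have $U_v\subseteq U_{c_1}\otimes\cdots\otimes U_{c_k}$. At the root the map selects a vector $\psi\in U_{c_1}\otimes\cdots\otimes U_{c_k}$. Equivalently, after choosing bases, each vertex carries a coefficient tensor. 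The general Markov model $\V_T$ on the spaced tree $T$ is the closure of the image of the map that assigns a root vector $\pi\in W_\rho$ and, to each edge $(u,w)$, a matrix in $\mathrm{Hom}(W_u,W_w)$, followed by the "multiply and contract at internal nodes" map. Here $W_x$ denotes the space attached to vertex $x$. By construction in the preamble of Section \ref{sec:TTN_are_spaced_trees}, $T$ should have the same underlying tree as $\mathscr{T}$, with $W_\lambda=V_\lambda$ at the leaves and $W_v=\CC^{r_v}$ at the internal non-root vertices; at the root I take $W_\rho$ of dimension equal to the product of the children's ranks, or a suitably refined version of this.

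\textbf{Inclusion $\mathrm{TTN}\subseteq \V_T$.} Take a generic TTN tensor with coefficient tensors $B_v\in \CC^{r_v}\otimes\bigotimes_{c}\CC^{r_c}$ at the internal vertices and basis matrices $U_\lambda\in\mathrm{Hom}(\CC^{r_\lambda},V_\lambda)$ at the leaves. An $(r_v)\times(r_{c_1}\cdots r_{c_k})$ coefficient tensor is not a product of edge matrices. It does, however, become one after introducing the diagonal "copy" tensor: a tensor $\sum_{i} e_i\otimes M_1e_i\otimes\cdots\otimes M_ke_i$ realises an arbitrary multilinear map once the hidden space at $v$ is enlarged to $\bigotimes_c\CC^{r_c}$. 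The spaced-tree construction presumably handles this by subdividing edges or inserting auxiliary vertices of the appropriate dimension. I would write the TTN tensor in the Markov form explicitly, with the edge matrices given by the flattenings of the $B_v$, and verify the identity entrywise by expanding the contraction.

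\textbf{Inclusion $\V_T\subseteq\mathrm{TTN}$.} I would argue with flattenings and invoke Proposition \ref{prop:TTN_set_theoretic}, which characterises TTN as the set of tensors with $\rank\psi^{(v)}\le r_v$. For a Markov model tensor, the flattening separating the leaves below $v$ from the remaining leaves factors through the hidden space $W_v$, because the sum over the hidden state at $v$ decouples the two sides. It follows that $\rank\psi^{(v)}\le\dim W_v=r_v$. Since both sets are closed (Zariski closures), we obtain $\V_T\subseteq\mathrm{TTN}$.

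\textbf{Main obstacle.} The main obstacle is matching the exact spaced tree construction: the dimensions at auxiliary vertices and at the root must make the first inclusion hold without increasing any flattening rank. I would check that every auxiliary vertex has dimension at most that of a genuine TTN cut, so that the flattening bound in the second step is not violated.
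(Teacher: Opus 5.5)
There is a genuine gap. The content of the theorem is that one specific spaced tree works, and your proposal never fixes that tree. The only concrete $T$ you write down (same underlying tree as $\mathscr{T}$, $W_\lambda=V_\lambda$ at the leaves, $W_v=\CC^{r_v}$ at internal vertices) makes both of your steps fail.

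In your first inclusion you enlarge the hidden space at $v$ to $\bigotimes_c\CC^{r_c}$, so that an arbitrary core becomes a copy-tensor contraction. In your second inclusion you use that the same space has dimension $r_v$. In general a single vertex cannot do both.

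Moreover, nothing in that $T$ enforces $\rank(\psi^{(\lambda)})\le r_\lambda$ at a leaf, because the leaf cut only factors through $W_\lambda=V_\lambda$. Already for the Tucker star with $\mathbf d=(2,2,2)$ and $\mathbf r=(1,1,2)$, your tree is a star with center $\CC^2$. So $\V_T=\sigma_2(\mathrm{Seg}(\PP^1\times\PP^1\times\PP^1))=\PP^7$, while $\mathrm{TTN}_{\mathscr T,\mathbf V,\mathbf r}$ is the Segre variety itself. Your closing sanity check (``every auxiliary vertex has dimension at most that of a genuine TTN cut'') is also not the right criterion, as explained below.

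The missing idea is to decouple the two roles. Subdivide every edge $\{\mathrm{par}(v),v\}$ of $\mathscr T$ by a rank node $h_v$ with $V_{h_v}=E_v=\CC^{r_v}$. Give each non-leaf $v$ of $\mathscr T$ a space of dimension $n_v=r_v\prod_{u\in\mathrm{ch}(v)}r_u=\dim\mathcal A_v$, with $r_\rho=1$; your root choice $\prod_{u\in\mathrm{ch}(\rho)}r_u$ is exactly $n_\rho$. Cutting $T$ at the rank nodes gives star trees $T_v$. Since every element of $\mathcal A_v$ is a sum of at most $n_v$ pure tensors, each $\Psi_{T_v}$ maps onto $\mathcal A_v$. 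Hence $\Theta=\prod_v\Psi_{T_v}\colon\mathrm{Rep}(T)\to\mathcal P_{\mathscr T,\mathbf V,\mathbf r}$ is surjective, and expanding via Proposition \ref{prop:spaced-tree-parametrization} gives $\Psi_T=\widehat{\Phi}_{\mathscr T,\mathbf V,\mathbf r}\circ\Theta$. This makes your copy-tensor step precise and yields both inclusions at once, $\operatorname{im}(\Psi_T)=\operatorname{im}(\widehat{\Phi}_{\mathscr T,\mathbf V,\mathbf r})$, which is the paper's argument.

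Your flattening argument for $\V_T\subseteq\mathrm{TTN}_{\mathscr T,\mathbf V,\mathbf r}$ (via Proposition \ref{prop:TTN_set_theoretic} and closedness) remains a valid alternative for that direction. It works only if the cut $\mathscr L(v)\mid\mathscr L\setminus\mathscr L(v)$ is routed through $V_{h_v}$, not through $V_v$. In the correct $T$ the non-rank vertices have dimension $n_v$, which generally exceeds every cut rank. This is harmless precisely because every cut passes through a degree-two rank node of dimension $r_v$.
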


We wish to remark that \cite[Theorem 2.1]{RS19} proves a result of a similar spirit, translating tensor hypernetworks (which include subspace representations and tensor trains) to discrete graphical models associated to a hypergraph. In Theorem \ref{thm:TTN=Spaced_tree} we translate tree tensor network varieties (which also include subspace representations and tensor trains) to general Markov models on spaced trees. Our approach brings the advantage of characterizing the defining ideal of tree tensor network varieties.
This is the second step in our proof of Theorem \ref{thm: main} and is based on the description of the ideal $\mathcal{I}(\V_T)$ given by \cite[Theorem 1.7]{DK09}. This ideal description contains additional ideals besides the determinantal ideals in Theorem \ref{thm: main}. In a crucial step in Section \ref{sec:equations} we show that these additional ideals are already included in the determinantal ideals.   

Our result on the primeness of the TTN ideal gives a rigorous definition of the Zariski tangent space of TTN varieties. This provides a new, intrinsic method to verify and construct tangent spaces of TTN varieties, as used for so-called tangent-space \emph{ans\"atze} in the study of correlated many-body systems \cite{Bauernfeind_2020, Haegeman_2011} or Riemannian optimization using TTN \cite{willner2025}. More specifically, the primeness of the ideal allows us to realize the tangent space independently of a parametrization and so-called \emph{gauge redundancies} via the Jacobian of the ideal generators. That might prove particularly useful for computational methods such as the \emph{time-dependent variational principle}  \cite{Bauernfeind_2020, Haegeman_2016, schmidt2026} or rank-adaptive methods for time-dependent tensor decompositions \cite{ceruti2020}.

Finally, we touch upon the Gr\"obner bases of defining ideals of tensor trains. We know remarkably little even in this special case. We point out that the determinantal ideal generated by the minors of two flattening matrices in the case of tensor trains of order $3$ is a double determinantal ideal. These minors form a Gr\"obner basis with respect to a diagonal term order as proved in \cite{FK20} and \cite{illian2025grobner}. We construct an explicit diagonal term order for the general case and conjecture in Conjecture \ref{conj: minors-form-GB} that with respect to this term order the minors generating the tensor train ideal form a Gr\"obner basis. We finish with presenting a purely combinatorial algorithm to compute the degree of a tensor train of order $3$ (Proposition \ref{prop:degree-of-order-3}).  
 
The paper is structured as follows. Section \ref{sec:TTN} is devoted to the definition and description of the tree tensor network variety $\mathrm{TTN}_{\mathscr{T}, \mathbf{V}, \mathbf{r}}$. In particular, we give an explicit paramaterization of 
$\mathrm{TTN}_{\mathscr{T}, \mathbf{V}, \mathbf{r}}$ in 
Proposition \ref{prop:contraction parametrization}. In Section \ref{sec:tensor train varieties} and \ref{sec:subspace varieties} we rigorously prove that both tensor train and subspace representation varieties are indeed special cases of tree tensor networks. Section \ref{sec:spaced trees} is a review of spaced trees and general Markov models on spaced trees from \cite{DK09}, adjusted to our setting. We give the details of the recursive parametrization of these varieties (Definition \ref{defn:spaced-tree-contraction}) and also present a direct parametrization in Proposition \ref{prop:spaced-tree-parametrization}. Section \ref{sec:equations_DK} is devoted to the development of the defining ideal of the general Markov model on a spaced tree (Theorem \ref{thm:ideal}, which is Theorem 1.7 in \cite{DK09}). 
Section \ref{sec:TTN_are_spaced_trees} proves Theorem \ref{thm:TTN=Spaced_tree} and Section \ref{sec:equations} gives the details of the proof of our main Theorem \ref{thm: main}. Section \ref{sec: GB} treats   Gr\"obner bases and gives
 a combinatorial method to compute the degree of order $3$ tensor trains. 

\section{Tree tensor network varieties}\label{sec:TTN}
In this section we define the varieties of tree tensor networks intrinsically, then introduce corresponding parametrizations of these varieties via tensor contraction and finally specialize the derivation to prominent cases of TTN: the subspace \cite{Breiding_2024, Landsberg2012} and tensor train varieties \cite{BFHP25,Oseledets_2011}.

Let $\mathscr T$ be a \emph{combinatorial  rooted tree} with a set of $N$  \emph{leaves} $\mathscr L$ and a set of \emph{internal nodes} $\mathscr N$ including the \emph{root} $\rho$ such that $\mathscr{T} = \mathscr{L} \sqcup \mathscr{N}$.
Fix a set of $N$ vector spaces $V_\lambda = \CC^{d_\lambda}$ with $d_\lambda \in \NN$, sometimes called the \emph{physical dimension}, for $\lambda \in \mathscr L$. 

In order to define TTN in full generality, we consider each node $v \in \mathscr N$ to have immediate \emph{children} $\mathrm{ch}(v) \subseteq \mathscr T\setminus \{\rho\}$, that is, nodes $u$ connected to $v$ via edges pointing away from the root $\rho$.  Note that we do not require internal nodes to have exactly two children,  generalizing TTN on binary trees defined in \cite{BUCZYNSKA_2015}. 
In the following, we also assume that $|\mathrm{ch}(\rho)| \geq 2$. For every non-root vertex
$v\in\mathscr T\setminus\{\rho\}$, let
$\operatorname{par}(v)$ denote its unique \emph{parent}. We associate a
positive integer $r_v$ to the edge $\{\operatorname{par}(v),v\}$.
These \emph{ranks} are indexed by the non-root vertices $v$, and we write $\mathbf r
=
(r_v)_{v\in\mathscr T\setminus\{\rho\}}$ for the collection of ranks, called a \emph{rank sequence}.
For notational convenience, we set $r_\rho:=1$,
but $r_\rho$ is not part of the rank sequence $\mathbf r$.
We call the rank sequence $\mathbf r$ \emph{admissible} if $r_\lambda\leq d_\lambda$ for every $\lambda\in\mathscr L$ and $r_v
\leq
\prod_{u\in\operatorname{ch}(v)}r_u$ for every $v\in\mathscr N\setminus\{\rho\}$.
Given a tree $\mathscr T$, the vector spaces $\mathbf{V} = (V_\lambda)_{\lambda \in \mathscr L}$ and an admissible rank sequence $\mathbf{r}$, the variety associated to this tree tensor network is the projective tree tensor network variety
\begin{equation*}
    \mathrm{TTN}_{\mathscr T, \mathbf{V}, \mathbf{r}} \subset \PP(W),\quad W:= \bigotimes_{\lambda \in \mathscr{L}} V_\lambda.
\end{equation*}
\begin{definition}\label{def:subspace_def}
    The tree tensor network variety $\mathrm{TTN}_{\mathscr T, \mathbf{V}, \mathbf{r}}$ is the set of tensors $[\psi] \in \PP(W)$, such that for each vertex $v \in \mathscr T$ there exists a linear affine subspace $U_v$ with $\dim(U_v) \leq r_v$ and
    \begin{itemize}
        \item $U_\lambda \subseteq V_\lambda$ for $\lambda \in  \mathscr L$,
        \item $U_v \subseteq \bigotimes_{u \in \mathrm{ch}(v)}U_{u}$ for $v\in \mathscr N$, and
        \item there exists a representative $\psi \in U_{\rho}$. 
    \end{itemize}
\end{definition}
This definition generalizes \cite[Definition 2.5]{BUCZYNSKA_2015} to 
arbitrary trees. 
The next proposition justifies the fact that $\mathrm{TTN}_{\mathscr T, \mathbf{V}, \mathbf{r}}$ is a projective variety by realizing the $\mathrm{TTN}$ variety as the zero locus of minors of various flattenings of non-zero tensors $\psi$ for $ [\psi] \in \PP(W)$. 
For $v\in \mathscr T$, define $\mathscr L(v)$
as the set of leaves in the subtree rooted at $v$, and similarly $\mathscr{N}(v)$ and $\mathscr{T}(v)$. We set 
\[
W_v:=\bigotimes_{\lambda\in\mathscr L(v)}V_\lambda,
\qquad
W_v':=\bigotimes_{\lambda \in \mathscr{L} \setminus\mathscr L(v)}V_\lambda,
\]
such that the flattening for $v \in \mathscr T \setminus \{\rho\}$ reads:
\begin{equation} \label{eq:flattening}
    \psi^{(v)}: \left(W_v' \right)^* \to  W_v.
\end{equation}
In general, $\psi^{(v)}$ will be
a matrix of size $\left(\prod_{\lambda \in \mathscr{L}(v)} \dim(V_\lambda)\right) ~\times~\left(\prod_{\lambda \in \mathscr{L} \setminus \mathscr{L}(v)} \dim(V_\lambda)\right)$.

\begin{proposition} \label{prop:TTN_set_theoretic}
    The variety $\mathrm{TTN}_{\mathscr T, \mathbf{V}, \mathbf{r}}$ is set-theoretically given as
    \begin{equation}\label{eq:set_theoretic_TTN}
        \mathrm{TTN}_{\mathscr T, \mathbf{V}, \mathbf{r}} = \{[\psi] \in \PP(W) \, | \, \rank(\psi^{(v)}) \leq r_v ~ \text{for all} ~ v \in \mathscr T \setminus \{\rho\}\}.
    \end{equation}
\end{proposition}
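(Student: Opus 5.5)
We prove the two inclusions separately. The key device is the subspace $\mathrm{im}(\psi^{(v)}) \subseteq W_v$, i.e.\ the span of all contractions of $\psi$ with covectors on the complementary factors $W_v'$.

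For ``$\subseteq$'', take $[\psi]$ in $\mathrm{TTN}_{\mathscr T, \mathbf{V}, \mathbf{r}}$ with subspaces $U_v$ as in Definition \ref{def:subspace_def}. By induction along the tree, every $U_v \subseteq \bigotimes_{\lambda\in\mathscr L(v)} U_\lambda \subseteq W_v$. Moreover, for every non-root $v$ with ancestor chain $v=v_0, v_1,\dots,v_k=\rho$ we have
\[
\psi\in U_\rho \subseteq U_{v} \otimes \bigotimes_{j=1}^{k}\bigotimes_{u\in \mathrm{ch}(v_j)\setminus\{v_{j-1}\}} U_u .
\]
This follows by repeatedly replacing $U_{v_j}$ by the larger space $\bigotimes_{u\in\mathrm{ch}(v_j)}U_u$, using that tensoring preserves inclusions. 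Hence $\psi\in U_v\otimes W_v'$, so $\mathrm{im}(\psi^{(v)})\subseteq U_v$ and $\rank(\psi^{(v)})\le \dim U_v\le r_v$.

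For ``$\supseteq$'', suppose $\rank(\psi^{(v)})\le r_v$ for all non-root $v$. Define $U_v:=\mathrm{im}(\psi^{(v)})\subseteq W_v$ for $v\neq\rho$, and $U_\rho:=\mathrm{span}(\psi)$. Then $\dim U_v\le r_v$ and $\dim U_\rho = 1 = r_\rho$. For leaves, $U_\lambda\subseteq V_\lambda$ holds automatically. The representative $\psi$ lies in $U_\rho$ by construction. It remains to show $U_v\subseteq\bigotimes_{u\in\mathrm{ch}(v)}U_u$ for every internal node $v$, including the root.

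This last inclusion is the main step. I will use the standard fact that for finite-dimensional spaces $A_1,\dots,A_m$ and subspaces $B_i \subseteq A_i$,
\[
\bigcap_{i}\Big(B_i\otimes \textstyle\bigotimes_{j\ne i}A_j\Big)=\bigotimes_i B_i .
\]
It is proved by choosing bases of each $A_i$ that extend bases of $B_i$ and comparing coordinates. Let $\mathrm{ch}(v)=\{u_1,\dots,u_m\}$, so $W_v=\bigotimes_i W_{u_i}$. It then suffices to show $U_v\subseteq U_{u_i}\otimes\bigotimes_{j\ne i}W_{u_j}$ for each $i$.

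For this, note that $\mathscr L(u_i)\subseteq \mathscr L(v)$, so $W_{u_i}'= \bigotimes_{j\neq i}W_{u_j}\otimes W_v'$. Consider an element $\psi^{(v)}(\alpha)$ of $U_v$, where $\alpha\in (W_v')^*$. Contract it further with any $\beta\in(\bigotimes_{j\ne i}W_{u_j})^*$. The result equals $\psi^{(u_i)}(\beta\otimes\alpha)$, which lies in $U_{u_i}$. Since every contraction of $\psi^{(v)}(\alpha)$ along the factors $\bigotimes_{j\ne i}W_{u_j}$ lands in $U_{u_i}$, the element itself lies in $U_{u_i}\otimes\bigotimes_{j\ne i}W_{u_j}$. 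To see this, pick a complement $C$ with $W_{u_i} = U_{u_i}\oplus C$; the component of the element in $C\otimes\bigotimes_{j\neq i}W_{u_j}$ has all such contractions zero, hence is zero.

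At the root, the same argument applies with $W_\rho'$ trivial and $\psi$ in place of $\psi^{(v)}(\alpha)$, giving $\psi\in\bigotimes_{u\in \mathrm{ch}(\rho)}U_u$. The admissibility of $\mathbf r$ is not needed for the set-theoretic equality. Once this is in place, the right-hand side of \eqref{eq:set_theoretic_TTN} is visibly the zero locus of the $(r_v+1)$-minors, hence Zariski closed.
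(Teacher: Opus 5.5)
Your proof is correct and follows essentially the same route as the paper: the forward inclusion comes from iterating the nesting conditions to get $\psi\in U_v\otimes W_v'$, and the converse sets $U_v=\mathrm{im}(\psi^{(v)})$ and $U_\rho=\mathrm{span}\{\psi\}$, then uses the intersection identity $\bigcap_i \bigl(U_{u_i}\otimes\bigotimes_{j\neq i}W_{u_j}\bigr)=\bigotimes_i U_{u_i}$. The only difference is cosmetic: you apply this identity to the elements $\psi^{(v)}(\alpha)$ inside $W_v$ via your contraction-and-complement argument, whereas the paper applies it to $\psi$ itself inside $W_v\otimes W_v'$ and then reads off $\mathrm{im}(\psi^{(v)})$.
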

\begin{proof}
We first show that Definition \ref{def:subspace_def} implies the description in \eqref{eq:set_theoretic_TTN}. 
Choose a nonzero representative $\psi\in W$ of $[\psi]\in
\mathrm{TTN}_{\mathscr T,\mathbf V,\mathbf r}$. By iterating the
nesting conditions in Definition \ref{def:subspace_def}, we obtain $\psi\in U_v\otimes W_v'$ for every $v\neq\rho$. Using \eqref{eq:flattening}, we get $\operatorname{im}(\psi^{(v)})\subseteq U_v$,
and therefore
$$
\operatorname{rank}(\psi^{(v)})
\leq \dim(U_v)
\leq r_v.
$$
For the converse, suppose that $\operatorname{rank}(\psi^{(v)})\leq r_v$
and define $U_v:=\operatorname{im}(\psi^{(v)})$ for $v\neq\rho$,
and set $U_\rho:=\operatorname{span}_{\mathbb C}\{\psi\}$.
Then $\dim(U_v)\leq r_v$ for every $v\neq\rho$, and
$U_\lambda\subseteq V_\lambda$ for every leaf $\lambda$.
To verify the nesting conditions of Definition \ref{def:subspace_def}, let
$v\in\mathscr N \setminus \{\rho\}$ be a non-leaf node and 
$\operatorname{ch}(v)=\{u_1,\ldots,u_s\}$.
For every $i \in [s]$, the definition
$U_{u_i}=\operatorname{im}(\psi^{(u_i)})$ implies
\[
\psi\in
U_{u_i}
\otimes
\bigotimes_{\substack{1\leq j\leq s\\j\neq i}}W_{u_j}
\otimes W_v'.
\]
Since the sets $\mathscr L(u_1),\ldots,\mathscr L(u_s)$ are pairwise
disjoint and their union is $\mathscr L(v)$, we have
\begin{equation*}
\bigcap_{i=1}^s
\left(
U_{u_i}
\otimes
\bigotimes_{\substack{1\leq j\leq s\\j\neq i}}W_{u_j}
\otimes W_v'
\right) 
=
\left(\bigotimes_{i=1}^sU_{u_i}\right)\otimes W_v'.
\end{equation*}
Thus
\[
\psi\in
\left(\bigotimes_{u\in\operatorname{ch}(v)}U_u\right)
\otimes W_v' \quad \text{and} \quad U_v=\operatorname{im}(\psi^{(v)})
\subseteq
\bigotimes_{u\in\operatorname{ch}(v)}U_u.
\]
At the root, the same intersection argument without the factor
$W_v'$ gives
$U_\rho\subseteq
\bigotimes_{u\in\operatorname{ch}(\rho)}U_u$.
This concludes the proof.
\end{proof}
We next describe a parametrization of $\mathrm{TTN}_{\mathscr T, \mathbf{V}, \mathbf{r}}$.
We denote by $\widehat{\mathrm{TTN}}_{\mathscr T,\mathbf V,\mathbf r}\subseteq W$ the affine cone over $\mathrm{TTN}_{\mathscr T,\mathbf V,\mathbf r}$.
For every $v\in\mathscr T\setminus\{\rho\}$, we associate the \emph{bond space} $E_v:=\mathbb C^{r_v}$ to the edge $\{\mathrm{par}(v), v\}$. At the root we set $E_\rho:=\mathbb C$. We give an example of a TTN with associated bond spaces in Figure \ref{fig:TTN_combinatorial_tree}.
\begin{figure}[h!]
    \centering
    \begin{tikzpicture}[
        scale=1.15,
        every node/.style={font=\small},
        leaf/.style={circle, draw, minimum size=8mm},
        root/.style={circle, draw, fill=gray!50, minimum size=8mm},
        node/.style={circle, draw, fill=gray!20, minimum size=8mm},
        edge/.style={line width=0.6pt},
    ]

    \node[root] (r) at (0,4) {$\rho$};
    
    \node[node] (v11) at (-2,3) {$v_1$};
    \node[node] (v12) at ( 2,3) {$v_2$};
    
    \node[leaf] (l21) at (-3,1.5) {$\lambda_1$};
    \node[leaf] (l22) at (-1,1.5) {$\lambda_2$};
    \node[leaf] (l23) at (1,1.5) {$\lambda_3$};
    \node[leaf] (l24) at (3,1.5) {$\lambda_4$};
    
    \draw[edge] (r) -- node[midway, above left] {$\CC^{r_{v_1}}$}(v11);
    \draw[edge] (r) -- node[midway, above right, fill=white] {$\CC^{r_{v_2}}$}(v12);
    
    \draw[edge] (v11) -- node[midway, above left] {$\CC^{r_{\lambda_1}}$}(l21);
    \draw[edge] (v11) -- node[midway, above right] {$\CC^{r_{\lambda_2}}$}(l22);
    \draw[edge] (v12) -- node[midway, above left] {$\CC^{r_{\lambda_3}}$}(l23);
    \draw[edge] (v12) -- node[midway, above right] {$\CC^{r_{\lambda_4}}$}(l24);

    \end{tikzpicture}
    \caption{Binary TTN of order 4 with associated bond spaces.}
    \label{fig:TTN_combinatorial_tree}
\end{figure}
Furthermore, each vertex $v\in\mathscr T$ carries an associated \emph{local parameter space}
\begin{equation*}
        \mathcal A_v :=
    \begin{cases}
        V_\lambda\otimes E_\lambda^*,
            & v=\lambda\in\mathscr L,\\[2mm]
\left(\bigotimes_{u\in\operatorname{ch}(v)}E_u \right) \otimes E_v^*,
            & v\in\mathscr N \setminus \{\rho\},\\[2mm]
        \bigotimes_{u\in\operatorname{ch}(\rho)}E_u, & v = \rho.          
    \end{cases}
\end{equation*}
The total parameter space of the tree tensor network is
\begin{equation*}
        \mathcal P_{\mathscr T,\mathbf V,\mathbf r}
    :=
    \prod_{v\in\mathscr T}\mathcal A_v.
\end{equation*}
An element $\mathbf A:=(A^v)_{v\in\mathscr T}\in \mathcal P_{\mathscr T,\mathbf V,\mathbf r}$ of this space is called a \emph{collection of local cores}. The local core $A^v$ 
is a tensor expressed with respect to a choice of bases. Let $\{e^\lambda_{j_\lambda}\}_{j_\lambda=1}^{d_\lambda}$ and $\{f^v_{\alpha_v}\}_{\alpha_v=1}^{r_v}$ be bases of $V_\lambda$ and $E_v$, respectively.
We write
\begin{equation*}
        A^\lambda
    =
    \sum_{j_\lambda=1}^{d_\lambda}\sum_{\alpha_\lambda = 1}^{r_\lambda}
    A^\lambda_{j_\lambda,\alpha_\lambda}
    e^\lambda_{j_\lambda}\otimes(f^\lambda_{\alpha_\lambda})^*
\end{equation*}
for the matrices that are leaf cores,
\begin{equation*}
        A^v
    =
    \sum_{\alpha_v=1}^{r_v}\sum_{(\alpha_u)_{u\in\operatorname{ch}(v)}}
    A^v_{(\alpha_u),\alpha_v}
    \left(
    \bigotimes_{u\in\operatorname{ch}(v)}f^u_{\alpha_u}
    \right)
    \otimes(f^v_{\alpha_v})^*
\end{equation*}
for the tensors that are cores at nodes $v\in\mathscr N\setminus\{\rho\}$. We use the shorthand notation $(\alpha_u)_{u\in\operatorname{ch}(v)} = \prod_{u\in\operatorname{ch}(v)} [r_u]$ with $[r_u] = \{1, ..., r_u\}$. For the root we have
\begin{equation*}
        A^\rho
    =
    \sum_{(\alpha_u)_{u\in\operatorname{ch}(\rho)}}
    A^\rho_{(\alpha_u)}
    \bigotimes_{u\in\operatorname{ch}(\rho)}f^u_{\alpha_u}.
\end{equation*}
\begin{definition}
    The \emph{affine contraction map}
\begin{equation*}
        \widehat{\Phi}_{\mathscr T,\mathbf V,\mathbf r}:
    \mathcal P_{\mathscr T,\mathbf V,\mathbf r}
    \longrightarrow W 
\end{equation*}
is given as follows. 
Let $\widehat{\Phi}_\lambda \, : \, \mathcal{A}_\lambda \longrightarrow  V_\lambda \otimes E_\lambda^*$ 
be the identity map for each leaf $\lambda \in \mathscr{L}$. Then recursively define
\begin{equation} \label{eq:TTN_map_at_v}
\widehat{\Phi}_v \, : \,  \mathcal{A}_v \, \, \times  \,\, \prod_{u \in \operatorname{ch}(v)} W_u \otimes E_u^* \,\,  \longrightarrow \,\, W_v \otimes E_v^* 
\end{equation}
\begin{equation}\label{eq:Phi_v_formula}
\left(A^v,(B^u)_{u\in\operatorname{ch}(v)}\right)
\longmapsto
\sum_{(\alpha_u)_{u \in \operatorname{ch}(v)}}
\left(
\bigotimes_{u\in\operatorname{ch}(v)}
B^u_{\bullet,\alpha_u}
\right)
\otimes
A^v_{(\alpha_u),\bullet}.
\end{equation}
where $A^v_{(\alpha_u),\bullet}$ is the slice of the tensor $A^v$ determined by the fixed set of indices  $(\alpha_u)_{u \in \operatorname{ch}(v)}$ and $B^u_{\bullet, \alpha_u}$ is the slice of the tensor $B^u$ for the fixed index $\alpha_u$. 
Each entry of the resulting tensor is equal to 
\begin{align} \label{eq:TTN_param}
\left(\widehat{\Phi}_\rho(\mathbf{A})\right)_{(j_\lambda)_{\lambda\in\mathscr L}} &= \left(
    \widehat{\Phi}_{\mathscr T,\mathbf V,\mathbf r}
    (\mathbf A)
    \right)_{(j_\lambda)_{\lambda\in\mathscr L}} \nonumber\\
    &=
    \sum_{(\alpha_w)_{w\in\mathscr T\setminus\{\rho\}}}
    A^\rho_{(\alpha_u)_{u\in\operatorname{ch}(\rho)}}
    \prod_{v\in\mathscr N\setminus\{\rho\}}
    A^v_{(\alpha_u)_{u\in\operatorname{ch}(v)},\alpha_v}
    \prod_{\lambda\in\mathscr L}
    A^\lambda_{j_\lambda,\alpha_\lambda}.
\end{align}
Since the contraction is linear in each local core, it induces a projective
rational map
\begin{equation*}
    \Phi_{\mathscr T,\mathbf V,\mathbf r}:
    \prod_{v\in\mathscr T}\PP(\mathcal A_v)
    \dashrightarrow
    \PP(W),
    \qquad
    ([A^v])_{v\in\mathscr T}
    \longmapsto
    [\widehat{\Phi}_{\mathscr T,\mathbf V,\mathbf r}
    (\mathbf A)],
\end{equation*}
defined on the locus where the contraction is nonzero.
\end{definition}
\begin{example} \label{ex:main_TTN}
 We illustrate $\widehat{\Phi}_{\mathscr{T}, \mathbf{V}, \mathbf{r}}$ on the tree tensor network depicted in Figure 
 \ref{fig:TTN_combinatorial_tree}.
 We denote $\dim(V_{\lambda_i})=d_{\lambda_i} = d_i$. With this $A^{\lambda_i}$ is a $d_i \times r_{\lambda_i}$ matrix. A local core for the node $v_1$ is the $r_{\lambda_1}\times r_{\lambda_2} \times r_{v_1}$ tensor
 $A^{v_1}$. Similarly, a local core for the node $v_2$ is the $r_{\lambda_3}\times r_{\lambda_4} \times r_{v_2}$ tensor
 $A^{v_2}$. 
 A local core for the root $\rho$ is a $r_{v_1} \times r_{v_2}$ matrix $A^{\rho}$. Besides the four local cores at the leaves
 we obtain the two tensors $B^{v_1} \in \CC^{d_1} \otimes \CC^{d_2} \otimes \CC^{r_{v_1}}$ and $B^{v_2} \in \CC^{d_3} \otimes \CC^{d_4} \otimes \CC^{r_{v_2}}$ given by
 \begin{align*}
     (\widehat{\Phi}_{v_1}(A^{v_1}, A^{\lambda_1}, A^{\lambda_2}))_{j_{\lambda_1},j_{\lambda_2},\beta_1}=:B^{v_1}_{j_{\lambda_1},j_{\lambda_2},\beta_1} &= \sum_{\alpha_1=1}^{r_{\lambda_1}}\sum_{\alpha_2=1}^{r_{\lambda_2}} A^{v_1}_{\alpha_1,\alpha_2, \beta_1} A^{\lambda_1}_{j_{\lambda_1},\alpha_1} A^{\lambda_2}_{j_{\lambda_2},\alpha_2}, \\
 (\widehat{\Phi}_{v_2}(A^{v_2}, A^{\lambda_3}, A^{\lambda_4}))_{j_{\lambda_3},j_{\lambda_4},\beta_2}=:B^{v_2}_{j_{\lambda_3},j_{\lambda_4},\beta_2} &= \sum_{\alpha_3=1}^{r_{\lambda_3}}\sum_{\alpha_4=1}^{r_{\lambda_4}} A^{v_2}_{\alpha_3,\alpha_4, \beta_2} A^{\lambda_3}_{j_{\lambda_3},\alpha_3} A^{\lambda_4}_{j_{\lambda_4},\alpha_4},
 \end{align*}
 where we denote $\alpha_i = \alpha_{\lambda_i}$ and $\beta_i = \alpha_{v_i}$ to simplify notation.
 Finally, at the root we compute the tree tensor $\psi = \widehat{\Phi}_{\mathscr{T}, \mathbf{V}, \mathbf{r}}(\mathbf{A}) = \widehat{\Phi}_\rho(A^\rho, B^{v_1}, B^{v_2})$ where
 \begin{align}\label{eq:example_TTN_N_4}
     \psi_{j_{\lambda_1},j_{\lambda_2},j_{\lambda_3},j_{\lambda_4}} &= \sum_{\beta_1 =1}^{r_{v_1}}\sum_{\beta_2 =1}^{r_{v_2}} A^\rho_{\beta_1,\beta_2}B^{v_1}_{j_{\lambda_1},j_{\lambda_2},\beta_1}B^{v_2}_{j_{\lambda_3},j_{\lambda_4},\beta_2} \nonumber\\
     &= \sum_{\substack{\beta_1, \beta_2 \\ \alpha_1, \alpha_2, \alpha_3, \alpha_4}} A^\rho_{\beta_1,\beta_2}A^{v_1}_{\alpha_1,\alpha_2, \beta_1}A^{v_2}_{\alpha_3,\alpha_4, \beta_2} A^{\lambda_1}_{j_{\lambda_1},\alpha_1} A^{\lambda_2}_{j_{\lambda_2},\alpha_2}A^{\lambda_3}_{j_{\lambda_3},\alpha_3} A^{\lambda_4}_{j_{\lambda_4},\alpha_4}.
 \end{align} 
\end{example}
\begin{proposition}[Contraction parametrization]
\label{prop:contraction parametrization}
The tree tensor network variety
$\mathrm{TTN}_{\mathscr{T}, \mathbf{V}, \mathbf{r}}$ is equal to the image of 
$\Phi_{\mathscr{T}, \mathbf{V}, \mathbf{r}}$.
\end{proposition}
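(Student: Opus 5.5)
We prove the two inclusions separately, using Proposition \ref{prop:TTN_set_theoretic} as the working description of $\mathrm{TTN}_{\mathscr T,\mathbf V,\mathbf r}$.

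\emph{Image is contained in the variety.} Let $\mathbf A$ be a collection of local cores with $\psi=\widehat{\Phi}_{\mathscr T,\mathbf V,\mathbf r}(\mathbf A)\neq 0$. Write $B^v:=\widehat{\Phi}_v(\cdots)\in W_v\otimes E_v^*$ for the intermediate tensor produced by the recursion \eqref{eq:Phi_v_formula} at each $v\neq\rho$. We first show, by induction from the root downwards, that $\psi$ is a contraction of $B^v$ with some tensor $C^v\in W_v'\otimes E_v$ over the bond index $\alpha_v$. At $v\in\operatorname{ch}(\rho)$ this follows directly from \eqref{eq:TTN_param}: take $C^v$ to be the contraction of $A^\rho$ with the $B^u$, $u\in\operatorname{ch}(\rho)\setminus\{v\}$. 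Going from a node $v$ to a child $u$, substitute the formula for $B^v$ in terms of $A^v$ and the $B^{u'}$. Then $C^u$ is the contraction of $C^v$, $A^v$ and the $B^{u'}$ with $u'\neq u$. Consequently the flattening $\psi^{(v)}$ factors through $E_v=\CC^{r_v}$, so $\operatorname{rank}\psi^{(v)}\le r_v$. By Proposition \ref{prop:TTN_set_theoretic}, $[\psi]\in\mathrm{TTN}_{\mathscr T,\mathbf V,\mathbf r}$.

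\emph{The variety is contained in the image.} Let $\psi\neq0$ satisfy $\operatorname{rank}\psi^{(v)}\le r_v$ for all $v\neq\rho$. As in the proof of Proposition \ref{prop:TTN_set_theoretic}, set $U_v=\operatorname{im}\psi^{(v)}$. Then $\dim U_v\le r_v$ and $U_v\subseteq\bigotimes_{u\in\operatorname{ch}(v)}U_u$, and at the root $\psi\in\bigotimes_{u\in\operatorname{ch}(\rho)}U_u$. For each $v\neq\rho$ choose a linear map $F_v\colon E_v\to W_v$ whose image is $U_v$; this is possible since $\dim U_v\le r_v$, padding with zero columns if necessary. We regard $F_v$ as an element of $W_v\otimes E_v^*$.

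\emph{Defining the local cores.} For a leaf $\lambda$, set $A^\lambda:=F_\lambda$. For an internal node $v\neq\rho$, the image of $F_v$ lies in $U_v\subseteq\bigotimes_{u\in\operatorname{ch}(v)}U_u=\operatorname{im}\bigl(\bigotimes_u F_u\bigr)$. Hence we can choose a lift $A^v\colon E_v\to\bigotimes_u E_u$ with $\bigl(\bigotimes_u F_u\bigr)\circ A^v=F_v$, for instance by choosing a right inverse of $\bigotimes_u F_u$ on its image, column by column. This $A^v$ lies in $\mathcal A_v$. At the root, choose $A^\rho\in\bigotimes_u E_u$ with $\bigl(\bigotimes_u F_u\bigr)(A^\rho)=\psi$.

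\emph{Checking the recursion.} By induction up the tree, \eqref{eq:Phi_v_formula} is exactly composition of maps, so $\widehat{\Phi}_v$ applied to these cores returns $B^v=F_v$ at every $v\neq\rho$, and at the root it returns $\psi$. Thus $[\psi]=\Phi_{\mathscr T,\mathbf V,\mathbf r}([A^v])$. Since $\psi\neq0$, every $A^v$ is nonzero, so each $[A^v]$ is a point of $\PP(\mathcal A_v)$ and the map is defined there.

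\emph{Main obstacle.} The delicate step is the identification, in both directions, of the contraction formula \eqref{eq:Phi_v_formula} with composition of linear maps, together with the tensor-product statement $\operatorname{im}\bigl(\bigotimes_u F_u\bigr)=\bigotimes_u\operatorname{im}F_u$. The latter is elementary, and once these are written carefully in the chosen bases both inclusions are routine.
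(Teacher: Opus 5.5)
Your proposal is correct and follows essentially the same route as the paper. The first inclusion factors $\psi^{(v)}$ through $E_v$ by contracting the subtree at $v$ and its complement, and the second builds the cores from zero-padded spanning families of the nested subspaces $U_v$; your maps $F_v$ and lifts $A^v$ are exactly the paper's padded bases $u^v_{\alpha}$ and basis-expansion coefficients, and your explicit check that each $A^v$ is nonzero (so the projective map is defined at the constructed point) is a detail the paper leaves implicit.
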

\begin{proof}
It is enough to show that the affine cone over the tree tensor network variety is equal to the image of
the affine contraction map, that is, 
\begin{equation*}
    \widehat{\mathrm{TTN}}_{\mathscr T,\mathbf V,\mathbf r}
    =
    \operatorname{im}
    \left(
    \widehat{\Phi}_{\mathscr T,\mathbf V,\mathbf r}
    \right).
\end{equation*}

First we show the inclusion $\operatorname{im}
    (
    \widehat{\Phi}_{\mathscr T,\mathbf V,\mathbf r}
    ) \subseteq \widehat{\mathrm{TTN}}_{\mathscr T,\mathbf V,\mathbf r}$.
 For non-zero $\psi \in \operatorname{im}(\widehat{\Phi}_{\mathscr T,\mathbf V,\mathbf r})$, pick a vertex $v \in \mathscr T \setminus \{\rho\}$. By removing the edge that connects $v$ to its parent, we get a subtree $\mathscr{T}(v)$ rooted at $v$.  Contracting all cores in the subtree rooted at $v$ gives a tensor $B^v \in W_v \otimes E_v^*$ (see Example \ref{ex:main_TTN}).
The entries $B^v_{\mathbf j_v, \alpha_v}$
of $B^v$ where $\mathbf j_v  = (j_\lambda)_{\lambda \in \mathscr L(v)}$
are computed via \eqref{eq:TTN_map_at_v} as
\begin{equation*}
    B^v_{\mathbf j_v, \alpha_v} = \sum_{(\alpha_w)_{w \in \mathscr{T}(v)}}
    A^v_{(\alpha_u)_{u\in\operatorname{ch}(v)}, \alpha_v}
    \prod_{w\in\mathscr N(v) \setminus\{v\}}
    A^w_{(\alpha_u)_{u\in\operatorname{ch}(w)},\alpha_w}
    \prod_{\lambda\in\mathscr L(v)}
    A^\lambda_{j_\lambda,\alpha_\lambda}.
\end{equation*}
Next, contract the cores in the complement of the subtree $\mathscr{T}(v)$, that is, all vertices $w\notin \mathscr{T}(v)$. Similarly to $B^v$, this gives a tensor $C^v \in E_v\otimes W_v'$,
such that $\psi_{\mathbf j_v, \mathbf j_v'} = \sum_{\alpha_v=1}^{r_v}B^v_{\mathbf j_v, \alpha_v}C^v_{\alpha_v, \mathbf j_v'} $ with $\mathbf j_v'  = (j_\lambda)_{\lambda \in \mathscr{L} \setminus \mathscr{L}(v)}$.
Therefore, the flattening $\psi^{(v)}$ can equivalently be written as a matrix product $\psi^{(v)} = B^vC^v$ and clearly $\mathrm{rank}(\psi^{(v)}) \leq r_v$.
Since this holds for all vertices $v\in\mathscr T \setminus \{\rho\}$ we conclude that 
$\operatorname{im}(\widehat{\Phi}_{\mathscr T,\mathbf V,\mathbf r}) \subseteq \widehat{\mathrm{TTN}}_{\mathscr T,\mathbf V,\mathbf r}$.

Now assume that $\psi \in \widehat{\mathrm{TTN}}_{\mathscr T,\mathbf V,\mathbf r}$ and recall Definition \ref{def:subspace_def}. For each $v\in\mathscr T\setminus\{\rho\}$, let $s_v:=\dim(U_v)\leq r_v.$
Choose a basis $\{u_1^v,\ldots,u_{s_v}^v\}$
of $U_v$, and set $u_\alpha^v:=0$ for $s_v<\alpha\leq r_v$.
Thus $(u_\alpha^v)_{\alpha=1}^{r_v}$ is a zero-padded spanning
family of $U_v$. 
For $\lambda \in \mathscr{L}$ we have $U_\lambda \subseteq V_\lambda$ and express the chosen basis of $U_\lambda$ in terms of the basis of $V_\lambda$,
    \begin{equation}\label{eq:basis_exp_lambda}
        u^\lambda_{\alpha_\lambda} = \sum_{j_\lambda=1}^{d_\lambda}A^\lambda_{j_\lambda, \alpha_\lambda}e^\lambda_{j_\lambda},
    \end{equation}
    where we have $A^\lambda_{j_\lambda, \alpha_\lambda} = 0$ whenever $s_\lambda < \alpha_\lambda \leq r_\lambda$.
    This defines the leaf core $A^\lambda \in V_\lambda \otimes E_\lambda^*$. 
    Next, for $v \in \mathscr N \setminus \{\rho\}$ we have  $U_v \subseteq \bigotimes_{u\in \mathrm{ch}(v)}U_u$. This allows us to write
    \begin{equation}\label{eq:basis_exp_v}
        u^v_{\alpha_v} = \sum_{(\alpha_u)_{u\in\mathrm{ch}(v)}} A^v_{(\alpha_u),\alpha_v}\bigotimes_{u\in\mathrm{ch}(v)}u^{u}_{\alpha_{u}},
    \end{equation}
    which defines the core $A^v \in \left(\bigotimes_{u\in\operatorname{ch}(v)}E_u \right) \otimes E_v^*$, where again $A^v_{(\alpha_u),\alpha_v} = 0$ whenever $s_v < \alpha_v \leq r_v$.
    Finally, since $ \psi\in U_\rho \subseteq    \bigotimes_{u\in\operatorname{ch}(\rho)}U_u$, we obtain the root core 
    \begin{equation}\label{eq:psi_subspace_proof}
    \psi
    =
    \sum_{(\alpha_u)_{u\in\operatorname{ch}(\rho)}}
    A^\rho_{(\alpha_u)_{u\in\operatorname{ch}(\rho)}}
    \bigotimes_{u\in\operatorname{ch}(\rho)}
    u^u_{\alpha_u}.
    \end{equation}
 Taking into account \eqref{eq:basis_exp_lambda} and \eqref{eq:basis_exp_v}, we see that $\psi$ in \eqref{eq:psi_subspace_proof} can be expressed in the basis of $W$ and this expression is precisely the right-hand side of \eqref{eq:TTN_param}. This shows that 
    $\psi \in 
    \operatorname{im} (\widehat{\Phi}_{\mathscr T,\mathbf V,\mathbf r})$  and concludes the proof.   
\end{proof}
\noindent
We now illustrate the construction of general TTN using tensor train (TT) and subspace varieties. Both are prominent examples of varieties arising from tensor decompositions via sequential application of singular value or QR decompositions.
\subsection{Tensor train varieties} \label{sec:tensor train varieties}
Tensor trains, also known as matrix product states \cite{Schollwoeck2011}, are a special case of tree tensor networks. Here we define them and illustrate how one can view them as tree tensor networks.

\begin{definition}
A tensor train variety is determined by two sequences of positive
integers
\[
\mathbf d=(d_1,\ldots,d_N)
\qquad\text{and}\qquad
\mathbf r=(r_0=1,r_1,\ldots,r_{N-1},r_N=1).
\]
For each $i=1,\ldots,N$, let $A^i
\in
\CC^{d_i}
\otimes
\mathbb C^{r_i}
\otimes
(\mathbb C^{r_{i-1}})^*$.
Since $r_0=r_N=1$, the corresponding one-dimensional factors in
the first and last cores may be omitted.
The tensor train variety $\mathrm{TT}_{\mathbf d,\mathbf r}$ is
the Zariski closure of the image of the rational map \cite[Section 5]{BFHP25}
\begin{equation}\label{eq:tensor-train-param}
\begin{aligned}
\Psi_{\mathbf d,\mathbf r}:
\prod_{i=1}^N
\mathbb P\left(
\CC^{d_i}
\otimes
\mathbb C^{r_i}
\otimes
(\mathbb C^{r_{i-1}})^*
\right)
&\dashrightarrow
\mathbb P\left(
\bigotimes_{i=1}^N \CC^{d_i}
\right),\\
([A^i])_{i=1}^N
&\longmapsto
[\psi],
\end{aligned}
\end{equation}
where
\begin{equation}\label{eq:TT-entries}
\psi_{j_1,\cdots ,j_N}
=
\sum_{\alpha_1=1}^{r_1}
\cdots
\sum_{\alpha_{N-1}=1}^{r_{N-1}}
A^1_{j_1,\alpha_1}
A^2_{j_2,\alpha_2,\alpha_1}
\cdots
A^N_{j_N,\alpha_{N-1}}.
\end{equation}
\end{definition}

\begin{proposition}\label{prop:TT_is_TTN}
Let $\mathbf{d}=(d_1, \ldots, d_N)$ and $\mathbf{r} = (r_0=1, r_1, \ldots, r_{N-1},r_N=1)$ be sequences of positive integers. Then the corresponding TT variety is equal to a TTN variety,
$$ \mathrm{TT}_{\mathbf{d}, \mathbf{r}} = \mathrm{TTN}_{\mathscr{T}, \mathbf{V},\mathbf{r}},$$
where 
\begin{enumerate}
\item $\mathscr{L} = \{\lambda_1,\ldots, \lambda_N\}$ and $\mathscr{N} = \{v_1=\rho, v_2, \ldots, v_N\}$, and the edges in $\mathscr{T}$ are  $\{\lambda_i, v_i\}$ 
for $i=1, \ldots, N$ and $\{v_i, v_{i+1}\}$ for $i=1,\ldots, N-1$,
\item $V_{\lambda_i} = \CC^{d_i}$ for $i=1,\ldots, N$, and
\item $r_{\lambda_i}=d_i$ for $i=1,\ldots,N$, so that
$E_{\lambda_i}=V_{\lambda_i}$, and
\[
r_{v_i}=r_{i-1}
\qquad
\text{for }i=2,\ldots,N.
\]
As before, we use the convention $r_{v_1}=r_\rho=1$.

\end{enumerate}
\end{proposition}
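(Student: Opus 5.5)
We compare the two parametrizations directly and then deal with the closure. By Proposition \ref{prop:contraction parametrization}, $\mathrm{TTN}_{\mathscr T,\mathbf V,\mathbf r}$ equals the image of $\Phi_{\mathscr T,\mathbf V,\mathbf r}$. By Proposition \ref{prop:TTN_set_theoretic}, it is Zariski closed, being cut out by minors. Since $\mathrm{TT}_{\mathbf d,\mathbf r}$ is by definition the closure of the image of $\Psi_{\mathbf d,\mathbf r}$, it suffices to show that the affine images $\operatorname{im}\widehat\Psi_{\mathbf d,\mathbf r}$ and $\operatorname{im}\widehat\Phi_{\mathscr T,\mathbf V,\mathbf r}$ coincide. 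Closing both sides then gives the statement.

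First we set up the tree data. Root $\mathscr T$ at $v_1$. Then $\mathrm{ch}(v_i)=\{\lambda_i,v_{i+1}\}$ for $i<N$ and $\mathrm{ch}(v_N)=\{\lambda_N\}$. Note $|\mathrm{ch}(\rho)|=2$ when $N\ge 2$. The bond spaces are $E_{\lambda_i}=\CC^{d_i}$ and $E_{v_i}=\CC^{r_{i-1}}$. The local parameter spaces are therefore:
\begin{itemize}
\item $\mathcal A_{\lambda_i}=V_{\lambda_i}\otimes E_{\lambda_i}^*$, whose elements are $d_i\times d_i$ matrices;
\item $\mathcal A_{v_i}=E_{\lambda_i}\otimes E_{v_{i+1}}\otimes E_{v_i}^*\cong \CC^{d_i}\otimes\CC^{r_i}\otimes(\CC^{r_{i-1}})^*$ for $2\le i\le N-1$, which is exactly the TT core space;
\item $\mathcal A_{\rho}=\CC^{d_1}\otimes\CC^{r_1}$ and $\mathcal A_{v_N}=\CC^{d_N}\otimes(\CC^{r_{N-1}})^*$, which match the boundary TT cores after omitting the one-dimensional factors.
\end{itemize}
Writing out \eqref{eq:TTN_param} for this tree gives
\[
\psi_{j_1,\dots,j_N}=\sum_{\alpha,\beta}A^{\rho}_{\alpha_1,\beta_2}\prod_{i=2}^{N-1}A^{v_i}_{\alpha_i,\beta_{i+1},\beta_i}\,A^{v_N}_{\alpha_N,\beta_N}\prod_{i=1}^N A^{\lambda_i}_{j_i,\alpha_i},
\]
where $\alpha_i$ ranges over $[d_i]$ and $\beta_i$ over $[r_{i-1}]$.

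For $\operatorname{im}\widehat\Psi\subseteq\operatorname{im}\widehat\Phi$, take the leaf cores $A^{\lambda_i}=\mathrm{Id}_{d_i}$ and $A^{v_i}=A^i$. The formula above then collapses to \eqref{eq:TT-entries}, after renaming $\beta_{i+1}\mapsto\alpha_i$. For the reverse inclusion, absorb each leaf matrix into its node core by setting $\tilde A^i_{j_i,\cdot,\cdot}:=\sum_{\alpha_i}A^{\lambda_i}_{j_i,\alpha_i}A^{v_i}_{\alpha_i,\cdot,\cdot}$. The sum over $\alpha_i$ factors through exactly one node core, so the TTN tensor equals $\widehat\Psi((\tilde A^i)_i)$. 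Projectively, we must also discard the parameters where some core is zero or the contraction vanishes. This is harmless: a zero tensor is excluded on both sides, and any nonzero image tensor arises with all cores nonzero.

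The main obstacle is bookkeeping rather than conceptual. The delicate points are the degenerate cases: the boundary cores at $i=1$ and $i=N$ with $r_0=r_N=1$, and the case $N=1$, where $\rho$ has only one child and the tree must be treated separately or excluded. Admissibility of the TTN ranks is not actually needed, because both Proposition \ref{prop:contraction parametrization} and Proposition \ref{prop:TTN_set_theoretic} hold without it. Moreover $r_{\lambda_i}=d_i$ satisfies the leaf condition trivially.
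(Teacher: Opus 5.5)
Your proposal is correct and follows the paper's proof. For $\mathrm{TT}\subseteq\mathrm{TTN}$ you take identity leaf cores with $A^{v_i}=A^i$, and for the reverse inclusion you absorb each leaf core $A^{\lambda_i}$ into the adjacent node core $A^{v_i}$. You are slightly more careful than the paper: you make the closure step explicit, using that $\mathrm{TTN}$ is the image of $\Phi$ (Proposition \ref{prop:contraction parametrization}) and is Zariski closed (Proposition \ref{prop:TTN_set_theoretic}), and you flag two points the paper leaves unstated, namely that admissibility of the rank sequence is not needed and that $N=1$ violates $|\mathrm{ch}(\rho)|\geq 2$.
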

\begin{proof}
Under the rank sequence above, we have $E_{\lambda_i}=V_{\lambda_i}$ for $i \in [N]$ and $E_{v_i}\simeq\mathbb C^{r_{i-1}}$ for $i=2,\ldots,N$, together with $E_{v_1}=E_\rho=\mathbb C$.
First, let $(G^i)_{i=1}^N$ be a collection of ordinary tensor-train
cores, that is, a parametrization realizing a point in $\mathrm{TT}_{\mathbf{d}, \mathbf{r}}$. For every leaf $\lambda_i$, choose the TTN leaf core to be $A^{\lambda_i}
=
\operatorname{id}_{V_{\lambda_i}}.$
Choose the internal TTN core at $v_i$ to be $G^i$, using the
canonical identifications
$$
\begin{aligned}
\mathcal A_\rho
&=
E_{\lambda_1}\otimes E_{v_2}
\simeq
\CC^{d_1}\otimes\mathbb C^{r_1},\\
\mathcal A_{v_i}
&=
E_{\lambda_i}\otimes E_{v_{i+1}}\otimes E_{v_i}^*
\simeq
\CC^{d_i}
\otimes
\mathbb C^{r_i}
\otimes
(\mathbb C^{r_{i-1}})^* \quad \quad i=2,\ldots, N-1,\\
\mathcal A_{v_N}
&=
E_{\lambda_N}\otimes E_{v_N}^*
\simeq
\CC^{d_N}\otimes(\mathbb C^{r_{N-1}})^*.
\end{aligned}
$$
Hence, we realize the tensor-train cores as TTN cores with $G^1 \in \mathcal A_\rho$, $G^i \in \mathcal A_{v_i}$ for $i \in \{2,\ldots, N-1\}$ and $G^N \in \mathcal A_{v_N}$.
Then the TTN contraction \eqref{eq:TTN_param} reduces to the
tensor-train contraction \eqref{eq:TT-entries} and we conclude 
$
\mathrm{TT}_{\mathbf d,\mathbf r}
\subseteq
\mathrm{TTN}_{\mathscr T,\mathbf V,\mathbf r}$.

Conversely, consider TTN cores $(A^v)_{v\in\mathscr{T}}$ that realize a point on $\mathrm{TTN}_{\mathscr T,\mathbf V,\mathbf r}$. Write
$
L^i:=A^{\lambda_i}
\in
V_{\lambda_i}\otimes E_{\lambda_i}^*$
for the leaf core at $\lambda_i$, and write 
$ B^i:=A^{v_i} \in E_{\lambda_i}\otimes E_{v_{i+1}}\otimes E_{v_i}^*
$
for the adjacent internal core. Contract the
$E_{\lambda_i}^*$-factor of $L^i$ with the
$E_{\lambda_i}$-factor of $B^i$ for $i \in [N]$, and define a 
tensor-train core $G^i$ by
\[
G^i_{j_{\lambda_i},\alpha_{v_i},\alpha_{v_{i-1}}}
:=
\sum_{\alpha_{\lambda_i}=1}^{d_i}
L^i_{j_{\lambda_i},\alpha_{\lambda_i}}
B^i_{\alpha_{\lambda_i},\alpha_{v_i},\alpha_{v_{i-1}}} \in
\CC^{d_i}
\otimes
\mathbb C^{r_i}
\otimes
(\mathbb C^{r_{i-1}})^*,
\]
where we use the endpoint conventions $\alpha_0=\alpha_N=1.$
Setting $j_i = j_{\lambda_i}$, $\alpha_i = \alpha_{v_i}$ and substituting these $\mathrm{TT}$ cores into
\eqref{eq:TT-entries} gives exactly the tensor produced by the
original TTN contraction \eqref{eq:TTN_param} for $\mathscr{T}$ defined above. Hence
$
\mathrm{TTN}_{\mathscr T,\mathbf V,\mathbf r}
\subseteq
\mathrm{TT}_{\mathbf d,\mathbf r}$, and
the two varieties are therefore equal.
\end{proof}

\begin{example}
Let $\mathbf{d} = (d_1, d_2,d_3)$ and 
$\mathbf{r} = (r_0=1, r_1, r_2, r_3=1)$
define a tensor train variety in the space of tree tensors in $\CC^{d_1} \otimes \CC^{d_2} \otimes \CC^{d_3}$. 
The corresponding tree $\mathscr{T}$
that realizes this tensor train variety as a TTN can be seen in Figure \ref{fig:TTN=TT_tree}. The local parameter spaces of the TTN are
\begin{equation*}
    \mathcal A_{\lambda_i} = V_{\lambda_i}\otimes V_{\lambda_i}^*, \,\, \mathcal A_{v_1} = V_{\lambda_1}\otimes E_{v_2} \otimes \CC^*, \,\, A_{v_2} = V_{\lambda_2}\otimes E_{v_3} \otimes E_{v_2}^* , \,\, 
    A_{v_3} = V_{\lambda_3}\otimes \CC \otimes E_{v_3}^*.
\end{equation*}
In this case the parametrization reads
\begin{align*}
\widehat{\Phi}_{\mathscr{T}, \mathbf{V}, \mathbf{r}}(\mathbf{A})     &= \sum_{\alpha_{v_2}=1}^{r_{1}}\sum_{\alpha_{v_3}=1}^{r_{2}}\prod_{i = 1}^{3}\sum_{j_{\lambda_i}=1}^{d_i} A^1_{j_{\lambda_1}, \alpha_{v_2}} A^2_{j_{\lambda_2}, \alpha_{v_3}, \alpha_{v_2}}A^3_{j_{\lambda_3}, \alpha_{v_3}} e_{j_{\lambda_1}}\otimes e_{j_{\lambda_2}} \otimes e_{j_{\lambda_3}}. 
\end{align*}
\end{example}
\begin{figure}[h!]
    \centering
    \begin{tikzpicture}[
        scale=1.15,
        every node/.style={font=\small},
        leaf/.style={circle, draw, minimum size=8mm},
        root/.style={circle, draw, fill=gray!50, minimum size=8mm},
        node/.style={circle, draw, fill=gray!20, minimum size=8mm},
        edge/.style={line width=0.6pt},
    ]

    \node[node] (a1) at (0,4) {$v_1$};
    
    \node[leaf] (l1) at (-1,3) {$\lambda_1$};
    \node[node] (a2) at (1,3)  {$v_2$};

    \node[leaf] (l2) at (0, 2) {$\lambda_2$};
    \node[node] (a3) at (2, 2) {$v_3$};

    \node[leaf] (l3) at (1, 1) {$\lambda_3$};

    \draw[edge] (a1) -- node[midway, above left] {$E_{\lambda_1}$}(l1);
    \draw[edge] (a1) -- node[midway, above right, fill=white] {$E_{v_2}$}(a2);
    \draw[edge] (a2) -- node[midway, above left] {$E_{\lambda_2}$}(l2);
    \draw[edge] (a2) -- node[midway, above right] {$E_{v_3}$}(a3);
    \draw[edge] (a3) -- node[midway, above left] {$E_{\lambda_3}$}(l3);

    \end{tikzpicture}
    \caption{Tree tensor network of a tensor train decomposition of order 3 with bond spaces associated to the edges.}
    \label{fig:TTN=TT_tree}
\end{figure}
\begin{corollary}\label{cor: TT flattening}
Let $\mathbf{d}=(d_1, \ldots, d_N)$ and $\mathbf{r} = (r_0=1, r_1, \ldots, r_{N-1},r_N=1)$ be sequences of positive integers. The tensor train variety $\mathrm{TT}_{\mathbf{d}, \mathbf{r}}$ is
$$ \mathrm{TT}_{\mathbf{d}, \mathbf{r}} = \{ [\psi] \in \PP(W) \, : \, \mathrm{rank}(\psi^{(i)}) \leq r_i, \,\, i=1, \ldots, N-1\}$$
where $\psi^{(i)}$ is the flattening of 
$\psi$ corresponding to the partition 
$\{1,\ldots,i\} \mid\{i+1, \ldots, N\}$.
\end{corollary}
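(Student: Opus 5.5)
The plan is to combine Proposition \ref{prop:TT_is_TTN} with Proposition \ref{prop:TTN_set_theoretic}. By Proposition \ref{prop:TT_is_TTN}, $\mathrm{TT}_{\mathbf d,\mathbf r}=\mathrm{TTN}_{\mathscr T,\mathbf V,\mathbf r}$ for the caterpillar tree $\mathscr T$ with root $v_1$, internal nodes $v_2,\dots,v_N$ and leaves $\lambda_i$ hanging from $v_i$. Proposition \ref{prop:TTN_set_theoretic} then describes this variety as the set of $[\psi]$ with $\rank(\psi^{(v)})\le r_v$ for every non-root vertex $v$. It remains to identify these flattenings and rank bounds with the ones in the statement, and to discard the redundant conditions.

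First, the non-leaf vertices. For $i=2,\dots,N$ the subtree rooted at $v_i$ has leaves $\mathscr L(v_i)=\{\lambda_i,\dots,\lambda_N\}$. So $\psi^{(v_i)}$ is the flattening for the bipartition $\{1,\dots,i-1\}\mid\{i,\dots,N\}$, which is $\psi^{(i-1)}$ up to transposition. Its rank bound is $r_{v_i}=r_{i-1}$. As $i$ runs over $2,\dots,N$, this yields exactly the conditions $\rank(\psi^{(k)})\le r_k$ for $k=1,\dots,N-1$.

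Second, the leaf vertices. For $\lambda_i$ the flattening $\psi^{(\lambda_i)}$ has $d_i$ rows, and the bound is $r_{\lambda_i}=d_i$. This condition holds for every tensor, so it imposes nothing and can be dropped. The root imposes no condition, and $r_{v_1}=1$ is only a convention. Together these two steps give the claimed equality of sets.

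The only point needing care is that Proposition \ref{prop:TT_is_TTN} identifies $\mathrm{TT}_{\mathbf d,\mathbf r}$, which is defined as a Zariski closure of the image of $\Psi_{\mathbf d,\mathbf r}$, with the TTN variety. By Proposition \ref{prop:contraction parametrization}, that TTN variety equals the image itself, and by Proposition \ref{prop:TTN_set_theoretic} it is closed, so the closure causes no problem. I expect no real obstacle; the main bookkeeping is the index shift $r_{v_i}=r_{i-1}$ and checking that the leaf sets of the subtrees are the suffixes $\{i,\dots,N\}$.
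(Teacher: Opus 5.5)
Your proposal is correct and follows the paper's own proof. You combine Propositions \ref{prop:TT_is_TTN} and \ref{prop:TTN_set_theoretic}, identify $\psi^{(v_i)}$ as the transpose of $\psi^{(i-1)}$, and use $r_{v_i}=r_{i-1}$. You are slightly more careful than the paper, which silently drops the vacuous leaf conditions $\rank(\psi^{(\lambda_i)})\le d_i$ and does not comment on the Zariski closure.
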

\begin{proof}
Combining Propositions \ref{prop:TTN_set_theoretic} and  
\ref{prop:TT_is_TTN} we get
$$ \mathrm{TT}_{\mathbf{d}, \mathbf{r}} =\mathrm{TTN}_{\mathscr T,\mathbf V,\mathbf r} =  \{ [\psi] \in \PP(W) \, : \, \mathrm{rank}(\psi^{(v_i)}) \leq r_{v_i}, \,\, i=2, \ldots, N\}.$$
For $i=2,\ldots,N$, the flattening $\psi^{(v_i)}$ in \eqref{eq:flattening} corresponds to
the partition $\{i,\ldots,N\}\mid\{1,\ldots,i-1\}.$
It is therefore the transpose of $\psi^{(i-1)}$ and $\operatorname{rank}(\psi^{(v_i)})
=
\operatorname{rank}(\psi^{(i-1)}).$
Since $r_{v_i}=r_{i-1}$, the result follows.
\end{proof}
\subsection{Subspace varieties}
\label{sec:subspace varieties}
Another class of tree tensor network varieties are subspace varieties (see \cite{Landsberg2012}; for ideal-theoretic description see \cite{landsberg2007, oeding2016}). As in the tensor train case, we show that subspace varieties are a special case of tree tensor network varieties. 
\begin{definition} 
A subspace variety is determined by two sequences of positive
integers
\[
\mathbf d=(d_1,\ldots,d_N)
\qquad\text{and}\qquad
\mathbf r=(r_1,\ldots,r_N).
\]
These determine local cores $A^i \in \CC^{d_i} \otimes (\CC^{r_i})^*$ for $i=1, \ldots, N$, and $C \in \bigotimes_{i=1}^{N} \CC^{r_i}$. The subspace variety $\mathrm{S}_{\mathbf{d}, \mathbf{r}}$ is the image closure of the 
rational map
\begin{equation}\label{eq:subspace-param}
    \begin{aligned}
        \Psi_{\mathbf{d},\mathbf{r}}:
\left(\prod_{i=1}^N \PP\left(\CC^{d_i} \otimes (\CC^{r_i})^*\right)\right) \times \PP(\bigotimes_{i=1}^{N} \CC^{r_i} )  
& \dashrightarrow
\mathbb{P}\!\left(\bigotimes_{i=1}^N \mathbb{C}^{d_i}\right), \\
\big(([A^i])_{i=1}^N, [C]\big)
& \longmapsto
[\psi] ,
    \end{aligned}
\end{equation}
where 
\begin{equation} \label{eq:S-entries}
 \psi_{j_1, \cdots, j_N} = \sum_{\alpha_1=1}^{r_1}
\cdots
\sum_{\alpha_{N}=1}^{r_{N}} C_{\alpha_1, ..., \alpha_N} A^1_{j_1,\alpha_1} A^2_{j_2,\alpha_2} \cdots A^N_{j_N, \alpha_{N}}.
 \end{equation}

\end{definition}
\begin{proposition}\label{prop:S_is_TTN}
Let $\mathbf d=(d_1,\ldots,d_N)$ and $\mathbf{r} = (r_1, \ldots,r_N)$ be sequences of positive integers. Then the corresponding subspace variety is equal to a TTN variety,
$$ \mathrm{S}_{\mathbf{d}, \mathbf{r}} = \mathrm{TTN}_{\mathscr{T}, \mathbf{V},\mathbf{r}},$$
where 
\begin{enumerate}
\item $\mathscr{L} = \{\lambda_1,\ldots, \lambda_N\}$ and $\mathscr{N} = \{\rho\}$, and the edges in $\mathscr{T}$ are  $\{\lambda_i, \rho\}$ 
for $i=1, \ldots, N$,
\item $V_{\lambda_i} = \CC^{d_i}$ and $r_{\lambda_i} = r_i$ for $i=1,\ldots, N$. 
\end{enumerate}
\end{proposition}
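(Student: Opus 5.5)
The plan is to compare the two parametrizations directly, as in the proof of Proposition \ref{prop:TT_is_TTN}. For the star tree $\mathscr{T}$ described in the statement, the local parameter spaces are $\mathcal A_{\lambda_i} = V_{\lambda_i}\otimes E_{\lambda_i}^* \simeq \CC^{d_i}\otimes(\CC^{r_i})^*$ and $\mathcal A_\rho = \bigotimes_{i=1}^N E_{\lambda_i}\simeq \bigotimes_{i=1}^N \CC^{r_i}$. There are no internal nodes other than $\rho$, so $\mathscr N\setminus\{\rho\}=\emptyset$. The contraction formula \eqref{eq:TTN_param} therefore reduces to
\[
\psi_{j_{\lambda_1},\ldots,j_{\lambda_N}} = \sum_{\alpha_{\lambda_1},\ldots,\alpha_{\lambda_N}} A^\rho_{\alpha_{\lambda_1},\ldots,\alpha_{\lambda_N}} \prod_{i=1}^N A^{\lambda_i}_{j_{\lambda_i},\alpha_{\lambda_i}}.
\]
Under the identifications $C = A^\rho$ and $A^i = A^{\lambda_i}$, this is literally \eqref{eq:S-entries}. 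Hence the rational maps $\Psi_{\mathbf d,\mathbf r}$ and $\Phi_{\mathscr T,\mathbf V,\mathbf r}$ coincide, up to the canonical identification of their domains $\prod_i \PP(\mathcal A_{\lambda_i})\times \PP(\mathcal A_\rho)$.

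It follows that the image of $\Psi_{\mathbf d,\mathbf r}$ equals the image of $\Phi_{\mathscr T,\mathbf V,\mathbf r}$. By Proposition \ref{prop:contraction parametrization}, the latter image is $\mathrm{TTN}_{\mathscr T,\mathbf V,\mathbf r}$. The subspace variety is defined as the Zariski closure of the image. To identify it with the TTN variety, I would use that $\mathrm{TTN}_{\mathscr T,\mathbf V,\mathbf r}$ is Zariski closed by Proposition \ref{prop:TTN_set_theoretic}, being cut out by minors. So the image is already closed and taking the closure changes nothing.

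The only point needing care is admissibility. The TTN setup assumes $r_{\lambda_i}\le d_i$, while the statement allows arbitrary positive $r_i$. I would handle this by observing that neither Proposition \ref{prop:TTN_set_theoretic} nor Proposition \ref{prop:contraction parametrization} actually uses admissibility: the zero-padding argument works for any ranks. Alternatively, one can replace $r_i$ by $\min(r_i,d_i)$ without changing either variety. The condition $|\mathrm{ch}(\rho)|\ge 2$ holds when $N\ge 2$, which is implicit in the setting.

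I expect no real obstacle here. The main step is just checking that the index conventions match.
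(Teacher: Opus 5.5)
Your proposal is correct and follows the paper's own argument. The paper identifies $C = A^\rho$ and $A^i = A^{\lambda_i}$ and observes that the contraction formula \eqref{eq:TTN_param} for the star tree becomes \eqref{eq:S-entries}. Your additional remarks fill in points the paper leaves implicit: that the image is already Zariski closed, by Propositions \ref{prop:contraction parametrization} and \ref{prop:TTN_set_theoretic}, and that admissibility is never used.
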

\begin{proof}
For each leaf $\lambda_i$ the local core $A^{\lambda_i}$  is the $d_i \times r_{\lambda_i}$ matrix  in $\mathcal A_{\lambda_i} = V_{\lambda_i}\otimes E_{\lambda_i}^*$. Since $\mathscr{N} = \{\rho\}$, there are no other internal nodes other than the root. For the root, we have the core $A^\rho \in \bigotimes_{i=1}^{N} E_{\lambda_i}$. Renaming
$A^i = A^{\lambda_i}$ and $C = A^\rho$, the tree tensor network parametrization in \eqref{eq:TTN_param} reduces to the subspace parametrization in \eqref{eq:S-entries}.
\end{proof}
\begin{example}
Let $\mathbf d=(d_1,d_2, d_3)$ and 
$\mathbf{r} = (r_1, r_2, r_3)$
define a subspace variety in the space of tree tensors in $\CC^{d_1} \otimes \CC^{d_2} \otimes \CC^{d_3}$. 
The corresponding tree $\mathscr{T}$
that realizes this subspace variety as a TTN can be seen in Figure \ref{fig:subspace_combinatorial_tree}. The local TTN parameter spaces are
\begin{equation*}
    \mathcal A_{\lambda_i} = V_{\lambda_i}\otimes E_{\lambda_i}^*, \,\, A^\rho \in E_{\lambda_1} \otimes E_{\lambda_2} \otimes E_{\lambda_3}.
\end{equation*}
In this case the parametrization reads 
\begin{align*}
\widehat{\Phi}_{\mathscr{T}, \mathbf{V}, \mathbf{r}}(\mathbf{A})     &= \sum_{\alpha_{\lambda_1}=1}^{r_1}\sum_{\alpha_{\lambda_2}=1}^{r_2}
\sum_{\alpha_{\lambda_3}=1}^{r_3}\prod_{i=1}^{3}\sum_{j_{\lambda_i}=1}^{d_i} C_{\alpha_{\lambda_1}, \alpha_{\lambda_2}, \alpha_{\lambda_3}}A^1_{j_{\lambda_1}, \alpha_{\lambda_1}} A^2_{j_{\lambda_2}, \alpha_{\lambda_2}}A^3_{j_{\lambda_3}, \alpha_{\lambda_3}} e_{j_{\lambda_1}}\otimes e_{j_{\lambda_2}} \otimes e_{j_{\lambda_3}}. 
\end{align*}
\end{example}
\begin{figure}[h!]
    \centering
    \begin{tikzpicture}[
        scale=1.15,
        every node/.style={font=\small},
        leaf/.style={circle, draw, minimum size=8mm},
        root/.style={circle, draw, fill=gray!50, minimum size=8mm},
        node/.style={circle, draw, fill=gray!20, minimum size=8mm},
        edge/.style={line width=0.6pt},
    ]

    \node[node] (rho) at (0,0) {$\rho$};
    
    \node[leaf] (l1) at (-2,-1.5) {$\lambda_1$};
    \node[leaf] (l2) at (0, -1.5) {$\lambda_2$};
    \node[leaf] (l3) at (2, -1.5) {$\lambda_3$};

    \draw[edge] (rho) -- node[midway, above left] {$E_{\lambda_1}$}(l1);
    \draw[edge] (rho) -- node[pos=0.9, above left] {$E_{\lambda_2}$}(l2);
    \draw[edge] (rho) -- node[midway, above right] {$E_{\lambda_3}$}(l3);

    \end{tikzpicture}
    \caption{Tree tensor network of a subspace (Tucker) decomposition of order 3 with bond spaces associated to the edges.}
    \label{fig:subspace_combinatorial_tree}
\end{figure}
\begin{corollary}\label{cor: S flattening}
Let $\mathbf d=(d_1,\ldots,d_N)$ and $\mathbf{r} = (r_1, \ldots,r_N)$ be sequences of positive integers. The subspace variety $\mathrm{S}_{\mathbf{d}, \mathbf{r}}$ is
 $$ \mathrm{S}_{\mathbf{d}, \mathbf{r}} = \{ [\psi] \in \PP(W) \, :\, \mathrm{rank}(\psi^{(i)}) \leq r_i \mbox{ for } i=1,\ldots, N\}$$
 where $\psi^{(i)}$ is the flattening of $\psi$ corresponding to the partition $\{i\} \mid \{1, \ldots, \widehat{i}, \ldots, N\}$.
\end{corollary}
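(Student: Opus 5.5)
This corollary follows from Proposition \ref{prop:TTN_set_theoretic} and Proposition \ref{prop:S_is_TTN}, in exact parallel with the proof of Corollary \ref{cor: TT flattening}. First I use Proposition \ref{prop:S_is_TTN} to identify $\mathrm{S}_{\mathbf d,\mathbf r}$ with $\mathrm{TTN}_{\mathscr T,\mathbf V,\mathbf r}$. Here $\mathscr T$ is the star tree with root $\rho$ and leaves $\lambda_1,\ldots,\lambda_N$, and the ranks are $r_{\lambda_i}=r_i$. Then Proposition \ref{prop:TTN_set_theoretic} describes this variety set-theoretically as the locus where $\rank(\psi^{(v)})\le r_v$ for every non-root vertex $v$.

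The only bookkeeping needed is to identify the non-root vertices and their flattenings.

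\emph{Vertices.} Since $\mathscr N=\{\rho\}$, the non-root vertices are exactly the leaves $\lambda_1,\ldots,\lambda_N$.

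\emph{Flattenings.} For a leaf $\lambda_i$ we have $\mathscr L(\lambda_i)=\{\lambda_i\}$, hence $W_{\lambda_i}=V_{\lambda_i}=\CC^{d_i}$ and $W_{\lambda_i}'=\bigotimes_{j\ne i}\CC^{d_j}$. By \eqref{eq:flattening}, $\psi^{(\lambda_i)}$ is therefore the $d_i\times\prod_{j\neq i}d_j$ flattening for the partition $\{i\}\mid\{1,\ldots,\widehat i,\ldots,N\}$. This is precisely $\psi^{(i)}$ as defined in the statement, possibly up to transposition depending on the convention, which does not change the rank. The rank bounds $r_{\lambda_i}=r_i$ then give exactly the conditions in the corollary.

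The one point to check is whether the equality in Proposition \ref{prop:S_is_TTN} is compatible with taking closures. $\mathrm{S}_{\mathbf d,\mathbf r}$ is defined as an image closure, while $\mathrm{TTN}$ equals the image of $\Phi_{\mathscr T,\mathbf V,\mathbf r}$ by Proposition \ref{prop:contraction parametrization}. That image is already closed, because by Proposition \ref{prop:TTN_set_theoretic} it is cut out by minors. So the closure is harmless, and Proposition \ref{prop:S_is_TTN} may be invoked as stated. I expect no real obstacle here; the argument is a direct specialization.

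Admissibility plays no role in this argument. If some $r_i>d_i$, the corresponding rank condition on $\psi^{(i)}$ is simply vacuous, and both sides of the claimed equality still agree.
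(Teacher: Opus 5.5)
Your proposal is correct and matches the paper's proof. Like the paper, you combine Proposition \ref{prop:S_is_TTN} with Proposition \ref{prop:TTN_set_theoretic}, observe that the non-root vertices of the star tree are exactly the leaves $\lambda_i$, and note that $\psi^{(\lambda_i)}$ is the $\{i\}\mid\{1,\ldots,\widehat{i},\ldots,N\}$ flattening with $r_{\lambda_i}=r_i$. Your extra observation is also correct and is a bit of care the paper leaves implicit: the image of $\Phi_{\mathscr T,\mathbf V,\mathbf r}$ is already closed by Propositions \ref{prop:contraction parametrization} and \ref{prop:TTN_set_theoretic}, so the Zariski closure in the definition of $\mathrm{S}_{\mathbf d,\mathbf r}$ changes nothing.
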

\begin{proof}
Combining Propositions \ref{prop:TTN_set_theoretic} and  
\ref{prop:S_is_TTN} we get
$$ \mathrm{S}_{\mathbf{d}, \mathbf{r}} = \mathrm{TTN}_{\mathscr{T}, \mathbf{V},\mathbf{r}} = \{ [\psi] \in \PP(W) \, : \, \mathrm{rank}(\psi^{(\lambda_i)}) \leq r_{\lambda_i}, \,\, i=1, \ldots, N\}.
$$
For $i \in [N]$, the flattening $\psi^{(\lambda_i)}$ corresponds to the partition $\{i\} \mid \{1, \ldots, \widehat{i}, \ldots, N\}$. Since $r_{\lambda_i} = r_i$, the result follows.
\end{proof}
\section{General Markov models on spaced trees} \label{sec:spaced trees}
Our aim in the next section is to realize any given tree tensor network variety $\mathrm{TTN}_{\mathscr T, \mathbf{V}, \mathbf{r}}$ as a {\it general Markov model} associated to a {\it spaced tree}. Spaced trees and Markov models on them were treated in \cite{DK09} as a general framework for phylogenetic trees representing biological evolution via nucleotide mutations. In this section we present a concise summary of what we need from \cite{DK09} for the remaining sections.

\begin{definition}\cite[Definition 1.2.]{DK09}
A spaced tree $T$ is 
a finite undirected tree with a complex vector space $V_v$ of dimension $\mathrm{dim}(V_v) = n_v$ attached to each vertex $v$. Moreover, for each vertex $v$, we fix a non-degenerate symmetric bilinear form $\langle\cdot,\cdot \rangle_v: V_v \times V_v \rightarrow \CC$ and a basis $B_v$ that is orthonormal with respect to $\langle \cdot, \cdot \rangle_v$.
\end{definition}
The set of leaves of a spaced tree $T$ is denoted by $\mathcal{L}(T)$. In this setting, we define a leaf as a node $v\in T$ that has only one edge attached to it. The set of remaining vertices in $T$, that is $v\in T\setminus \mathcal{L}(T)$, is defined as \emph{internal nodes} $\mathrm{int}(T)$. The ambient tensor space associated to $T$ is $\bigotimes_{v \in \mathcal{L}(T)} V_v =: K_T$ and we denote the edges of $T$ by $\mathcal{E}(T)$. 
\begin{definition}\cite[Definition 1.4.]{DK09}
A \emph{representation} of a spaced tree $T$ assigns to every edge $e=\{v,w\}$ a tensor $E_e \in V_v \otimes V_w$. The set of all representations of the spaced tree $T$ is $\mathrm{Rep}(T)= \prod _{e=\{v,w\}} (V_v \otimes V_w)$. 
\end{definition}
Once we fix the orthonormal bases $B_v$ for each vertex $v$, the tensors $E_e$ with $e = \{v,w\}$ will be identified with matrices $A_e$ inducing linear homomorphisms $V_v \longrightarrow V_w$. With this we will view $\mathrm{Rep}(T)$ as a parameter space where the entries of these matrices are parameters. 

\begin{example} The simplest nontrivial example is the \emph{star tree} $T$ with three leaves: $T$ consists of a single internal vertex $q \in \mathrm{int}(T)$ and three leaves $\lambda_1, \lambda_2, \lambda_3 \in \mathcal{L}(T)$. If we assume $V_q \simeq \CC^{n_q}$
and $V_{\lambda_i} \simeq \CC^{n_{\lambda_i}}$ for some $n_q, n_{\lambda_i} \in \NN$ and $i=1,2,3$, 
then 
$$ \mathrm{Rep}(T) \, =\, \{A = (A_{\lambda_1 q}, A_{\lambda_2 q}, A_{\lambda_3 q}) \, : \, A_{\lambda_i q} \in \mathrm{Mat}_{ n_{\lambda_i} \times n_q}(\CC), \,\, i=1,2,3 \} $$
where $\mathrm{Mat}_{ n_{\lambda_i} \times n_q}(\CC)$ is the space of all complex $n_{\lambda_i} \times n_q$ matrices. 
\end{example}

\begin{definition}\cite{DK09} \label{defn:spaced-tree-contraction}
The \emph{polynomial contraction map}
\begin{equation*}
    \Psi_T \, : \, \mathrm{Rep}(T) \longrightarrow K_T
\end{equation*}
associated to a spaced tree $T$ is defined recursively. If $T$ has a single edge $e=\{v,w\}$ with $V_v \simeq \CC^{n_v}$
and $V_w \simeq \CC^{n_w}$ and a representation $A \in \mathrm{Mat}_{n_v \times n_w}$, then $\Psi_T(A) = A \in \CC^{n_v} \otimes \CC^{n_w}$. 
If $T$ has at least two edges, pick an internal node $q \in \mathrm{int}(T)$. After deleting $q$ and edges incident to it from $T$ one obtains $s\geq 2$ subtrees $T_i'$. Let $v_i$ be the vertex of $T_i'$ where the deleted edge was incident. Now, let $T_i$ for $i=1, \ldots, s$ be the \emph{subtree} obtained by re-attaching to $T_i'$ this unique deleted edge $\{v_i,q\}$ at $v_i$ (including the node $q$).
 Given a representation $A \in \mathrm{Rep}(T)$, let $A_i \in \mathrm{Rep}(T_i)$ be the induced representation on each $T_i$. For these subtrees let
\begin{equation}
\label{eq:subtree-rep-map}
\Psi_{T_i} \, : \, \mathrm{Rep}(T_i) \longrightarrow \bigotimes_{v \in \mathcal{L}(T_i)} V_v =: K_{T_i}
\end{equation}
 be the associated recursively defined maps. We decompose $\Psi_T(A)$ via these subtree maps $\Psi_{T_i}(A_i)$,
\begin{equation}\label{eq:psi_map_explicit}
    \Psi_T(A) = \sum_{j_q=1}^{n_q} \left( \Psi_{T_1}(A_1)^{j_q} \otimes \Psi_{T_2}(A_2)^{j_q} \otimes \cdots \otimes \Psi_{T_s}(A_s)^{j_q}\right) \in K_T, 
\end{equation}
where $\Psi_{T_i}(A_i)^{j_q}$ is the slice of the tensor $\Psi_{T_i}(A_i)$ with fixed index $j_q$ corresponding to the node $q$. We refer to the contraction defined above as the \emph{gluing} of subtrees at node $q$. 
\\
We now prove a statement that highlights the fact that $\Psi_T(A)$ can be constructed by contracting  the matrices associated to all edges $\mathcal{E}(T)$.  For this define $\mathcal{L}(T_i)$ as the set of leaves of subtree $T_i$ (which includes node $q$).
\end{definition}

\begin{proposition} \label{prop:spaced-tree-parametrization} Let $T$ be a spaced tree and $A = (A_e \, : \, e \in \mathcal{E}(T)) \in \mathrm{Rep}(T)$. Then
\begin{equation} \label{eq:spaced-tree-param} 
(\Psi_T(A))_{(j_v)_{v \in \mathcal{L}(T)}} = \sum_{(j_v)_{v\in T \setminus \mathcal{L}(T)}} \prod_{\{u,w\} \in \mathcal{E}(T)} (A_{uw})_{j_u j_w}.
\end{equation}
\end{proposition}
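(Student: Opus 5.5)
We prove \eqref{eq:spaced-tree-param} by strong induction on the number of edges $|\mathcal{E}(T)|$, following the recursive definition of $\Psi_T$ in Definition \ref{defn:spaced-tree-contraction}. In the base case $T$ has one edge $e=\{v,w\}$, both endpoints are leaves, and $T\setminus\mathcal{L}(T)=\emptyset$. The right-hand side of \eqref{eq:spaced-tree-param} is then the single product $(A_{vw})_{j_vj_w}$ with an empty sum over internal indices, which is exactly $\Psi_T(A)=A$.

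For the inductive step, fix the internal node $q$ used in the recursion and the subtrees $T_1,\dots,T_s$, each with strictly fewer edges than $T$. We record three combinatorial facts:
\begin{itemize}
\item[(i)] the edge sets $\mathcal{E}(T_i)$ partition $\mathcal{E}(T)$, since every edge of $T$ lies either in some $T_i'$ or is one of the re-attached edges $\{v_i,q\}$;
\item[(ii)] $\mathcal{L}(T_i)=(\mathcal{L}(T)\cap T_i)\sqcup\{q\}$;
\item[(iii)] the internal vertices of $T_i$ are exactly the internal vertices of $T$ lying in $T_i'$, and these sets are pairwise disjoint and together with $\{q\}$ exhaust $\mathrm{int}(T)$.
\end{itemize}
Fact (ii) uses that a vertex of $T_i'$ other than $v_i$ has the same degree in $T_i$ as in $T$. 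The vertex $v_i$ also keeps its degree, because the deleted edge is re-attached, so $v_i$ is a leaf of $T_i$ if and only if it is a leaf of $T$. Finally, $q$ has degree one in $T_i$.

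Now apply the induction hypothesis to each $\Psi_{T_i}(A_i)$. Its slice at $j_q$ is
\[
\sum_{(j_v)_{v\in \mathrm{int}(T_i)}}\ \prod_{\{u,w\}\in\mathcal{E}(T_i)}(A_{uw})_{j_uj_w},
\]
with the leaf indices of $T_i$ other than $q$ left free. Substitute these expressions into \eqref{eq:psi_map_explicit}. An entry of a tensor product of slices is the product of the entries, so
\[
(\Psi_T(A))_{(j_v)_{v\in\mathcal{L}(T)}}=\sum_{j_q=1}^{n_q}\ \prod_{i=1}^s\ \sum_{(j_v)_{v\in\mathrm{int}(T_i)}}\ \prod_{\{u,w\}\in\mathcal{E}(T_i)}(A_{uw})_{j_uj_w}.
\]
By (iii) the index sets of the inner sums are disjoint, so we may interchange the product and the sums (distributivity) to obtain a single sum over $j_q$ and over all $(j_v)_{v\in\mathrm{int}(T)\setminus\{q\}}$. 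By (i) the product over $i$ of the products over $\mathcal{E}(T_i)$ is the product over $\mathcal{E}(T)$. The result is exactly the right-hand side of \eqref{eq:spaced-tree-param}.

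The main obstacle is the bookkeeping in (ii) and (iii), namely checking that leaves and internal nodes of the subtrees correspond correctly to those of $T$, with $q$ appearing as a leaf in every $T_i$ but being summed only once. A consequence of the formula is that $\Psi_T$ does not depend on the choice of $q$. This independence follows from the formula itself, so it needs no separate argument.
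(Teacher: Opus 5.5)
Your proof is correct and follows essentially the same route as the paper: induction along the recursive definition of $\Psi_T$, a single-edge base case, and substitution of the inductive hypothesis for each $\Psi_{T_i}(A_i)$ into the gluing formula \eqref{eq:psi_map_explicit}. Your facts (i)--(iii) (edge partition, correspondence of leaves and internal vertices, and $q$ being summed only once) make explicit the bookkeeping that the paper's proof leaves implicit.
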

\begin{proof}
We use induction based on the recursion that defines $\Psi_T$. In this recursion, when a $T$ is just an edge $\{u,w\}$ with $\mathrm{Rep}(T) = \{A_{uw}\}$, then $(\Psi_T(A_{uw}))_{j_uj_w} = (A_{uw})_{j_uj_w}$, and in this simple case, this is equal to the expression in \eqref{eq:spaced-tree-param} since there is no non-leaf to sum over and there is exactly one edge. For the induction step, let $q$ be an internal node of $T$ and let $T_i$, $i=1,\ldots, s$, be the subtrees adjacent to $q$ with their respective maps $\Psi_{T_i}$ as in  \eqref{eq:subtree-rep-map}. Moreover, $\mathrm{Rep}(T_i)$ consists of $A_i = (A_e \, : \, e \in \mathcal{E}(T_i))$.
By induction assumption
\begin{equation}\label{eq:subtree_edge_param}
    (\Psi_{T_i}(A_i))_{(j_v)_{v \in \mathcal{L}(T_i)}} = \sum_{(j_v)_{v \in T_i \setminus\mathcal{L}(T_i)}} \prod_{\{u,w\} \in \mathcal{E}(T_i)} (A_{uw})_{j_u j_w}.
\end{equation}
Note that in the product above we have matrices $A_{v_iq}$, where $\{v_i, q\}$ is the edge in $T_i$ that is incident to 
the leaf $q$.  Now $\Psi_T(A) = \Psi_T(A_i \, : \, i=1, \ldots, s)$ 
and using the contraction map 
for $\Psi_T(A)$ as in \eqref{eq:psi_map_explicit}, we get
\begin{equation}\label{eq:full_tree_contraction}
    (\Psi_T(A))_{(j_v)_{v \in \mathcal{L}(T)}} = \sum_{j_q=1}^{n_q} \prod_{i=1}^s ((\Psi_{T_i}(A_i))_{(j_v)_{v \in \mathcal{L}(T_i)}})^{j_{q}}.
\end{equation}
Substituting $(\Psi_{T_i}(A_i))_{(j_v)_{v \in \mathcal{L}(T_i)}}$ in \eqref{eq:subtree_edge_param} in expression \eqref{eq:full_tree_contraction} yields \eqref{eq:spaced-tree-param}.
\end{proof}

\begin{definition}\cite[Definition 1.5]{DK09}
The \emph{affine general Markov model} associated to $T$ is
\[
\widehat{\V}_T
:=
\overline{\operatorname{im}(\Psi_T)}
\subseteq
K_T,
\]
where the closure is taken in the Zariski topology. Since
$\operatorname{Rep}(T)$ is an affine space and $\Psi_T$ is a
polynomial map, $\widehat{\V}_T$ is irreducible. We call $\V_T:=\mathbb P(\widehat{\V}_T)$ the corresponding irreducible projective spaced-tree variety.
\end{definition}

\begin{example} \label{ex:sp-tree}
Consider the following tree $T$. 
\begin{center}
\begin{tikzpicture}[
  scale=1,
  transform shape,
  x=1.0cm,
  y=1.0cm,
  every node/.style={font=\small},
  leaf/.style={circle, fill, inner sep=1.6pt},
  internal/.style={circle, draw, inner sep=1.6pt},
  rank/.style={circle, draw, fill=gray!50, inner sep=1.6pt},
  edge/.style={line width=0.6pt}
]

\def\dx{1.05}  
\def\dy{0.95}  

\node[leaf, label=below:$\lambda_1$] (L1) at (-\dx,0) {};
\node[rank, label=below:$h_1$] (h1) at (0,0) {};
\node[internal, label=below:$q$] (q) at (\dx,0) {};
\node[rank, label=below:$h_2$] (h2) at (2*\dx,0) {};
\node[leaf, label=below:$\lambda_3$] (l3) at (3*\dx,0) {};

\node[leaf, label=above:$\lambda_2$] (L2) at (\dx,\dy) {};
\draw (q) -- (L2);

\draw (L1) -- (h1) -- (q) -- (h2) -- (l3);

\end{tikzpicture}
\end{center}
Let $V_{\lambda_1} = V_{\lambda_2} = V_{\lambda_3} \simeq \CC^3$, $V_{h_1} = V_{h_2} \simeq \CC^2$,  and $V_q \simeq \CC^4$.
A representation consists of 
$$A = (A_{\lambda_1 h_1}, A_{h_1 q}, A_{\lambda_2 q}, A_{h_2 q}, A_{\lambda_3 h_2})$$
where $A_{\lambda_1 h_1} = (a_{j_{\lambda_1}, j_{h_1}}) \in \mathrm{Mat}_{3 \times 2}(\CC)$, $A_{\lambda_2 q} = (b_{j_{\lambda_2},j_{q}}) \in \mathrm{Mat}_{3 \times 4}(\CC)$, $A_{\lambda_3 h_2} = (c_{j_{\lambda_3}, j_{h_2}}) \in \mathrm{Mat}_{3 \times 2}(\CC)$, $A_{h_1 q} = (d_{j_{h_1},j_{q}}) \in \mathrm{Mat}_{2 \times 4}(\CC)$, and $A_{h_2 q} = (e_{j_{h_2}, j_{q}}) \in \mathrm{Mat}_{2 \times 4}(\CC)$.
We first split $T$ into two trees $T_1$  and $T_2$:
\begin{center}
\begin{tikzpicture}[
  scale=1,
  transform shape,
  x=1.0cm,
  y=1.0cm,
  every node/.style={font=\small},
  leaf/.style={circle, fill, inner sep=1.6pt},
  internal/.style={circle, draw, inner sep=1.6pt},
  rank/.style={circle, draw, fill=gray!50, inner sep=1.6pt},
  edge/.style={line width=0.6pt}
]

\def\dx{1.05}  
\def\dy{0.95}  

\node[leaf, label=below:$\lambda_1$] (L1) at (-\dx,0) {};
\node[rank, label=below:$h_1$] (h1) at (0,0) {};

\draw (L1) -- (h1);

\node at (\dx,0){};
\node[rank, label=below:$h_1$] (H1) at (2*\dx,0) {};
\node[internal, label=below:$q$] (q) at (3*\dx,0) {};
\node[rank, label=below:$h_2$] (H2) at (4*\dx,0) {};
\node[leaf, label=below:$\lambda_3$] (L3) at (5*\dx,0) {};

\draw (H1) -- (q) -- (H2) -- (L3);

\node[leaf, label=above:$\lambda_2$] (L2) at (3*\dx,\dy) {};

\draw (L2) -- (q);

\end{tikzpicture}
\end{center}
Next $T_2$ is split into two trees $\tilde{T}_1$ and $\tilde{T}_2$:
\begin{center}
\begin{tikzpicture}[
  scale=1,
  transform shape,
  x=1.0cm,
  y=1.0cm,
  every node/.style={font=\small},
  leaf/.style={circle, fill, inner sep=1.6pt},
  internal/.style={circle, draw, inner sep=1.6pt},
  rank/.style={circle, draw, fill=gray!50, inner sep=1.6pt},
  edge/.style={line width=0.6pt}
]

\def\dx{1.05}  
\def\dy{0.95}  

\node at (\dx,0){};
\node[rank, label=below:$h_1$] (H1) at (2*\dx,0) {};
\node[internal, label=below:$q$] (q) at (3*\dx,0) {};
\node[rank, label=below:$h_2$] (H2) at (4*\dx,0) {};

\draw (H1) -- (q) -- (H2);

\node[leaf, label=above:$\lambda_2$] (L2) at (3*\dx,\dy) {};

\draw (L2) -- (q);

\node[rank, label=below:$h_2$] (HH2) at (6*\dx,0) {};
\node[leaf, label=below:$\lambda_3$] (L3) at (7*\dx,0) {};

\draw (HH2) -- (L3);

\end{tikzpicture}
\end{center}
Finally, $\tilde{T}_1$ is split into three trees $\bar{T}_1, \bar{T}_2$, and $\bar{T}_3$:
\begin{center}
\begin{tikzpicture}[
  scale=1,
  transform shape,
  x=1.0cm,
  y=1.0cm,
  every node/.style={font=\small},
  leaf/.style={circle, fill, inner sep=1.6pt},
  internal/.style={circle, draw, inner sep=1.6pt},
  rank/.style={circle, draw, fill=gray!50, inner sep=1.6pt},
  edge/.style={line width=0.6pt}
]

\def\dx{1.05}  
\def\dy{0.95}  

\node[rank, label=below:$h_1$] (H1) at (-\dx,0) {};
\node[internal, label=below:$q$] (q1) at (0,0) {};

\draw (H1) -- (q1);

\node at (\dx,0){};

\node[internal, label=below:$q$] (q) at (3*\dx,0) {};

\draw (q) ;

\node[leaf, label=above:$\lambda_2$] (L2) at (3*\dx,\dy) {};

\draw (L2) -- (q);

\node[internal, label=below:$q$] (q2) at (6*\dx,0) {};
\node[rank, label=below:$h_2$] (H2) at (7*\dx,0) {};

\draw (q2) -- (H2);

\end{tikzpicture}
\end{center}
We get $\Psi_{\bar{T}_1}(A_{h_1q}) = A_{h_1q}$,  $\Psi_{\bar{T}_2}(A_{\lambda_2q}) = A_{\lambda_2q}$, and $\Psi_{\bar{T}_3}(A_{h_2q}) = A_{h_2q}$. We glue these trees together at $q$ via \eqref{eq:psi_map_explicit} into $\tilde{T}_1$ where
 \begin{equation*}
     P := \Psi_{\tilde{T}_1}(A_{h_1 q}, A_{\lambda_2 q}, A_{h_2 q}) = \sum_{j_q=1}^{n_q = 4}(A_{h_1 q})^{j_q}\otimes (A_{\lambda_2 q})^{j_q}\otimes (A_{h_2 q})^{j_q} \in \CC^2 \otimes \CC^3 \otimes \CC^2,
 \end{equation*}
 and $P_{j_{h_1}, j_{\lambda_2}, j_{h_2}} = \sum_{j_q=1}^{4}d_{j_{h_1}, j_q}b_{j_{\lambda_2}, j_q}e_{j_{h_2}, j_q}$.
Next, we glue $\tilde{T}_1$ and $\tilde{T}_2$ at $h_2$ into $T_2$. Noting that $\Psi_{\tilde{T}_2}(A_{\lambda_3h_2}) = A_{\lambda_3h_2}$, we get the tensor
\begin{equation*}
    Q := \Psi_{T_2}(A_{h_1 q}, A_{\lambda_2 q}, A_{h_2 q}, A_{\lambda_3h_2}) = \sum_{j_{h_2}=1}^{n_{h_2}=2}(\Psi_{\tilde{T}_1}(A_{h_1 q}, A_{\lambda_2 q}, A_{h_2 q}))^{j_{h_2}}\otimes(\Psi_{\tilde{T}_2}(A_{\lambda_3h_2}))^{j_{h_2}}
\end{equation*}
in $\CC^2 \otimes \CC^3 \otimes \CC^3$ given by
\begin{equation*}
    Q_{j_{h_1}, j_{\lambda_2}, j_{\lambda_3}} = \sum_{j_{h_2}=1}^2 P_{j_{h_1}, j_{\lambda_2}, j_{h_2}}c_{j_{\lambda_3}, j_{h_2}} = \sum_{j_{h_2}=1}^2 \sum_{j_q=1}^4 d_{j_{h_1}, j_q}b_{j_{\lambda_2}, j_q}e_{j_{h_2}, j_q}c_{j_{\lambda_3}, j_{h_2}}.
\end{equation*}
Finally, we glue $T_1$ and $T_2$ at $h_1$ into $T$. Again, since $\Psi_{T_1}(A_{\lambda_1h_1}) = A_{\lambda_1 h_1}$, the result is a tensor $R \in \CC^3 \otimes \CC^3 \otimes \CC^3$ with 
\begin{align*}
    R := \Psi_{T}(A) = \sum_{j_{h_1} = 1}^{n_{h_1}=2}(\Psi_{T_2}(A_{h_1 q}, A_{\lambda_2 q}, A_{h_2 q}, A_{\lambda_3h_2}))^{j_{h_1}}\otimes (\Psi_{T_1}(A_{\lambda_1h_1}))^{j_{h_1}},
\end{align*}
given by 
\begin{equation*}
    R_{j_{\lambda_1}, j_{\lambda_2}, j_{\lambda_3}} = \sum_{j_{h_1} = 1}^{2} a_{j_{\lambda_1}, j_{h_1}}Q_{j_{h_1}, j_{\lambda_2}, j_{\lambda_3}} =  \sum_{j_{h_1} = 1}^{2}\sum_{j_{h_2}=1}^2 \sum_{j_q=1}^4 a_{j_{\lambda_1}, j_{h_1}}d_{j_{h_1}, j_q}b_{j_{\lambda_2}, j_q}e_{j_{h_2}, j_q}c_{j_{\lambda_3}, j_{h_2}}.
\end{equation*}
In summary we get the map 
$$ \Psi_T \, : \, \mathrm{Rep}(T) \longrightarrow \CC^3 \otimes \CC^3 \otimes \CC^3$$
given by
$$ (\Psi_T(A))_{j_{\lambda_1}, j_{\lambda_2}, j_{\lambda_3}} = \sum_{j_{h_1} = 1}^{2}\sum_{j_{h_2}=1}^2 \sum_{j_q=1}^4 a_{j_{\lambda_1}, j_{h_1}}d_{j_{h_1}, j_q}b_{j_{\lambda_2}, j_q}e_{j_{h_2}, j_q}c_{j_{\lambda_3}, j_{h_2}}. $$
This expression is precisely the one claimed by Proposition \ref{prop:spaced-tree-parametrization} for the tree $T$ as above.
\end{example}

\subsection{Equations of spaced-tree varieties}\label{sec:equations_DK}
We continue to follow \cite{DK09} and now describe the ideal of $\V_T$.
For this, we need one more construction.
\begin{definition}\cite{DK09}\label{def:contracted_star_tree}
    Let $q$ be an internal vertex of $T$. Deleting $q$ and the incident edges produces finitely many trees, the \emph{connected components} of $T \setminus \{q\}$. We define an equivalence relation on $\mathcal{L}(T) \cup \{q\}$ by $p \sim r$ if and only if either $p=q=r$
or $p,r \neq q$ lie in the same connected component of 
 $T \setminus \{q\}$. 
This gives rise to the \emph{contracted star tree} $\mathfrak{s}_q(T)$ as follows. The vertices of this new tree are $\mathcal{L}(T) \cup \{q\}  / \sim$ and the class of $q$ is attached to all other classes by an edge. The resulting star tree $\mathfrak{s}_q(T)$ has (the class of) $q$ as its internal node and every other class corresponds to a leaf. We collect the classes $C$ associated to the leaves in the set $\mathcal{C}(q)$ and define the \emph{degree} of the node $q$ as $
D_q = |\mathcal{C}(q)|$. To each of these \emph{leaf classes} $C$ we associate the space $K_C := \bigotimes_{v \in C} V_v$. 
 The set of representations $\mathrm{Rep}(\mathfrak{s}_q(T))$ consists of collections of matrices $(A_{C, q} \, : \, C \in \mathcal{C}(q))$
where $A_{C,q} \in K  _C \otimes V_q^*$ is a $d_C \times n_q$ matrix with $d_C = \prod_{v \in C} n_v$.
\end{definition}

\begin{remark}\label{rmk:s_q_variety}
The image of the star-tree parametrization consists precisely of tensors of tensor rank at most $n_q$, that is, of tensors (see Proposition \ref{prop:spaced-tree-parametrization} and \eqref{eq:psi_map_explicit})
\begin{equation*}
    \psi = \Psi_{\mathfrak{s}_q(T)}(A)  = \sum_{j_q=1}^{n_q} \bigotimes_{C \in \mathcal{C}(q)} (A_{C,q})^{j_q} \in K_T,
\end{equation*}
where $(A_{C,q})^{j_q} \in K_C$ is the $j_q$th column of $A_{C,q}$. 
The corresponding spaced-tree variety $\widehat{\V}_{\mathfrak{s}_q(T)}$ is, by definition, the Zariski closure of this image. Consequently,
\begin{equation*}
    \V_{\mathfrak{s}_q(T)} = \sigma_{n_q}\left(\mathrm{Seg}\left(\bigtimes_{C \in \mathcal{C}(q)} \PP\left(K_C\right)\right)\right) \subseteq \PP\left( K_T\right),
\end{equation*}
namely, the $n_q$-th secant variety
of the Segre product of  projective spaces $\{ \PP\left(K_C\right) \, : \, C \in \mathcal{C}(q)\}$, which consists of projective tensors of border rank at most $n_q$. When $\mathfrak{s}_q(T)$ has only two leaf classes $C$ and $\overline{C} = \mathcal{L}(T) \setminus C$,
this secant variety is equal to all tensors $[\psi]$ whose flattenings corresponding to the partition $C\mid\overline{C}$ have rank at most $n_q$. In this case, the ideal $\mathcal{I}(\V_{\mathfrak{s}_q(T)})$ is generated by the $(n_q+1)$-minors of this flattening. When $\mathfrak{s}_q(T)$ has three or more leaf classes, there is no uniform determinantal description of the ideal in general. 
\end{remark}

\noindent
Now we are ready to present a fundamental theorem which describes the homogeneous prime ideal of the general Markov model $\mathcal{V}_T$ of the spaced tree $T$.
\begin{theorem} \label{thm:ideal} \cite[Theorem 1.7]{DK09} For any spaced tree $T$ we have 
$$ \mathcal{I}(\V_T) = \sum_{v \in \mathrm{int}(T)} \mathcal{I}(\V_{\mathfrak{s}_v(T)}).$$
\end{theorem}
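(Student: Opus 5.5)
The statement is Theorem 1.7 of \cite{DK09}; the natural route is to reproduce Draisma--Kuttler's argument, an induction on the number of internal vertices of $T$ that glues along an edge.

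The inclusion $\supseteq$ is immediate. For each internal vertex $q$, Proposition \ref{prop:spaced-tree-parametrization} lets us contract every edge except those at $q$. Summing over the indices of all vertices other than $q$ writes $\Psi_T(A)$ in the form
\[
\sum_{j_q=1}^{n_q}\bigotimes_{C\in\mathcal C(q)} (A'_{C,q})^{j_q},
\]
where $A'_{C,q}\in K_C\otimes V_q^*$ is obtained by contracting the component of $T\setminus\{q\}$ containing $C$. Hence $\operatorname{im}\Psi_T\subseteq \widehat{\V}_{\mathfrak s_q(T)}$. Taking closures gives $\mathcal I(\V_{\mathfrak s_q(T)})\subseteq \mathcal I(\V_T)$ for every $q$.

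For the inclusion $\subseteq$ I would induct on $|\mathrm{int}(T)|$. If there is one internal vertex, $T=\mathfrak s_q(T)$ and there is nothing to prove. Otherwise, pick an edge $\{p,q\}$ between two internal vertices and split $T$ there into $T_1\ni p$ and $T_2\ni q$. Give each piece the extra leaf $q$, respectively $p$, whose space is $V_q$, respectively $V_p$. The key structural statement is a gluing lemma: $\widehat{\V}_T$ is the image of $\widehat{\V}_{T_1}\times\widehat{\V}_{T_2}$ under contraction along the edge, and at the level of ideals
\[
\mathcal I(\V_T)=\mathcal I(\V_{\mathfrak s_p(T)})+\mathcal I(\V_{\mathfrak s_q(T)})+\cdots.
\]
The standard way to organize this is through $\mathrm{GL}(V_p)$- or $\mathrm{GL}(V_q)$-invariant theory. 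First, enlarge $V_q$ to make it large enough, which is harmless because the ideal of a flattening-rank variety becomes zero in that direction. Then identify $\widehat{\V}_T$ with a quotient of $\widehat{\V}_{T_1}\times\widehat{\V}_{T_2}$ by $\mathrm{GL}(V_q)$. The first fundamental theorem describes the invariant polynomials, and the second fundamental theorem says their relations are generated by $(n_q+1)$-minors of the relevant flattening. Those minors lie in $\mathcal I(\V_{\mathfrak s_q(T)})$.

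The inductive hypothesis applies to $T_1$ and $T_2$. The contracted star trees of $T_i$ at their internal vertices pull back to contracted star trees of $T$, because contracting a component through the edge $\{p,q\}$ is exactly what the classes $C\in\mathcal C(v)$ encode. Combining the pulled-back generators with the gluing relations gives the sum formula.

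The main obstacle is this gluing step. One must show that the ideal of the contracted image equals the ideal generated by the pulled-back equations of the factors together with the flattening minors. This amounts to an ideal-theoretic, not merely set-theoretic, statement about the contraction map. I would establish it by a reduction to the case $n_q\ge\prod$ of the adjacent leaf-class dimensions, using a "$V_q$ large" flattening lemma, and then descend via the Cauchy-type decomposition of the coordinate rings. Since the paper cites \cite{DK09}, in practice I would invoke their result at this point rather than reprove it.
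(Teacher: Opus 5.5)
The paper gives no proof of this statement beyond citing \cite[Theorem 1.7]{DK09}, so deferring the inclusion $\subseteq$ to Draisma--Kuttler is exactly the paper's approach, and your direct proof of $\supseteq$ is correct: fixing $j_q$ in Proposition~\ref{prop:spaced-tree-parametrization} makes $\Psi_T(A)$ factor through $\mathrm{Rep}(\mathfrak{s}_q(T))$. Your sketch of $\subseteq$ would not work as written, though: both pieces keep the edge $\{p,q\}$, so contracting them does not reproduce $\Psi_T$, and enlarging an original $V_q$ is not harmless because it changes both $\V_T$ and $\mathcal{I}(\V_{\mathfrak{s}_q(T)})$; the clean version glues by a matrix product at a degree-two vertex inserted on $\{p,q\}$, whose dimension can be taken $\geq \min(n_p,n_q)$ without changing either side of the identity.
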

\begin{example}
Consider the spaced tree $T$ in Example \ref{ex:sp-tree}. The contracted star trees for the three internal vertices $h_1, q, h_2$ are:
\begin{figure}[htbp]
\centering
\setlength{\abovecaptionskip}{3pt}
\setlength{\belowcaptionskip}{-3pt}
\begin{subfigure}[t]{0.30\textwidth}
\centering
\begin{tikzpicture}[
  scale=0.9,
  transform shape,
  x=1.0cm, y=1.0cm,
  every node/.style={font=\small},
  leaf/.style={circle, fill, inner sep=1.6pt},
  internal/.style={circle, draw, inner sep=1.6pt},
  rank/.style={circle, draw, fill=gray!50, inner sep=1.6pt},
  edge/.style={line width=0.6pt}
]

\node[leaf, label=below:$\CC^3$] (h11) at (-1,0) {};
\node[leaf, label=below:$\CC^9$] (h12) at (1,0) {};
\node[rank, label=below:$\CC^2$] (q0) at (0,0) {};

\draw[edge, line width=1pt] (h11) -- (q0);
\draw[edge, line width=1pt] (h12) -- (q0);

\end{tikzpicture}
\caption{Star tree $\mathfrak{s}_{h_1}(T)$.}
\label{fig:Tl1}
\end{subfigure}
\hfill
\begin{subfigure}[t]{0.30\textwidth}
\centering
\begin{tikzpicture}[
  scale=0.9,
  transform shape,
  x=1.0cm, y=1.0cm,
  every node/.style={font=\small},
  leaf/.style={circle, fill, inner sep=1.6pt},
  internal/.style={circle, draw, inner sep=1.6pt},
  rank/.style={circle, draw, fill=gray!50, inner sep=1.6pt},
  edge/.style={line width=0.6pt}
]

\node[leaf, label=below:$\CC^3$] (h21) at (-1,0) {};
\node[leaf, label=right:$\CC^3$] (h22) at (0,1) {};
\node[leaf, label=below:$\CC^3$] (h11) at (1,0) {};
\node[internal, label=below:$\CC^4$] (q11) at (0,0) {};

\draw[edge, line width=1pt] (h21) -- (q11);
\draw[edge, line width=1pt] (h22) -- (q11);
\draw[edge, line width=1pt] (h11) -- (q11);

\end{tikzpicture}
\caption{Star tree $\mathfrak{s}_{q}(T)$.}
\label{fig:Tq0}
\end{subfigure}
\hfill
\begin{subfigure}[t]{0.30\textwidth}
\centering
\begin{tikzpicture}[
  scale=0.9,
  transform shape,
  x=1.0cm, y=1.0cm,
  every node/.style={font=\small},
  leaf/.style={circle, fill, inner sep=1.6pt},
  internal/.style={circle, draw, inner sep=1.6pt},
  rank/.style={circle, draw, fill=gray!50, inner sep=1.6pt},
  edge/.style={line width=0.6pt}
]

\node[leaf, label=below:$\CC^9$] (h11) at (-1,0) {};
\node[leaf, label=below:$\CC^3$] (h12) at (1,0) {};
\node[rank, label=below:$\CC^2$] (q0) at (0,0) {};

\draw[edge, line width=1pt] (h11) -- (q0);
\draw[edge, line width=1pt] (h12) -- (q0);

\end{tikzpicture}
\caption{Star tree $\mathfrak{s}_{h_2}(T)$.}
\label{fig:T_q11}
\end{subfigure}

\vspace{0.5em}

\caption{The contracted star trees of the spaced tree $T$ in Example \ref{ex:sp-tree} with associated vector spaces at the vertices.}
\label{fig:TTN_spaced_tree_combined}
\end{figure}
\\
According to Theorem \ref{thm:ideal} \cite{DK09}
$$ \mathcal{I}(\V_T) = \mathcal{I}(\V_{\mathfrak{s}_{h_1}(T)}) + \mathcal{I}(\V_{\mathfrak{s}_q(T)}) + \mathcal{I}(\V_{\mathfrak{s}_{h_2}(T)}).$$
Here every ideal lies in the polynomial ring $\CC[\psi_{ijk} \, : \, i,j,k=1,2,3]$.
We note that $\mathfrak{s}_{h_1}(T)$ consists of a path  with two edges. The spaces attached to its leaves are 
$V_{\lambda_1} \simeq \CC^3$ and $\bigotimes_{i\in\{2, 3\}}V_{\lambda_i} \simeq \CC^9 $. The space attached to its internal vertex is $V_{h_1} \simeq \CC^2$. Therefore, $\V_{\mathfrak{s}_{h_1}(T)}$ 
is the set of all $3 \times 9$ matrices of rank at most $2$ whose $(i,jk)$ entry is $\psi_{ijk}$.  
Hence, $\mathcal{I}(\V_{\mathfrak{s}_{h_1}(T)})$
is generated by all $3$-minors of the generic $3 \times 9$ matrix which is a flattening of the tensor $\psi$. The flattening corresponds to the partition $1|23$. Similarly, $\mathcal{I}(\V_{\mathfrak{s}_{h_2}(T)})$ is 
generated by all $3$-minors of the generic $9 \times 3$ matrix which is the flattening of $\psi$ corresponding to the partition $12|3$.  The remaining star tree $\mathfrak{s}_q(T)$ has three leaves to each of which $\CC^3$ is attached. Since $V_q \simeq \CC^4$, the variety $\V_{\mathfrak{s}_q(T)}$ is the closure of all tensors in $\CC^3 \otimes \CC^3 \otimes \CC^3$ with tensor rank at most $4$. Therefore, $\mathcal{I}(\V_{\mathfrak{s}_q(T)})$ is the ideal of the secant variety $\V_{\mathfrak{s}_q(T)} = \sigma_4(\mathrm{Seg}(\PP^2 \times \PP^2 \times \PP^2))$.
\end{example}

\section{TTN varieties are Markov models on spaced trees}\label{sec:TTN_are_spaced_trees}
In this section, we prove that the 
tree tensor network variety $\mathrm{TTN}_{\mathscr{T}, \mathbf{V}, \mathbf{r}}$ is equal to the general Markov model $\V_T$ on a corresponding spaced tree $T$. We will prove this by showing that the image of the parametrization $\widehat{\Phi}_{\mathscr T,\mathbf V,\mathbf r}$ for a tree tensor network $\mathscr{T}$, vector spaces $\boldsymbol{V}$ and rank sequence $\boldsymbol{r}$ coincides with the image of the parametrization $\Psi_T$ associated to a spaced tree $T$. 

We first construct the tree $T$ from the given TTN $\mathscr{T}$: it is obtained from $\mathscr{T}$ by inserting a new vertex of degree two on each edge of $\mathscr{T}$. This new vertex $h$ is referred to as a \emph{rank node}. Hence every edge $\{v,u\}$ of $\mathscr{T}$ is replaced by two edges  $\{v,h\}$ and $\{h,u\}$ that are connected via the node $h$. We keep the labels of the original vertices coming from $\mathscr{T}$ and label the new vertices as follows. Suppose that for the edge $\{v, u\}$ the vertex $v$ is an internal node and $u$ is one of its children in $\mathscr{T}$. Then the new vertex $h$ inbetween $v$ and $u$ will be labeled as $h_u$. See Figure \ref{fig:TTN_vs_Markov_tree_model} for an example. To each $h_v$ we attach the complex vector space $V_{h_v} = E_v$ with $\mathrm{dim}(V_{h_v}) = r_v$ from $\boldsymbol{r}$. Finally, to each node $v$ 
that is not a leaf and not a rank node, we attach the complex vector space $V_v$ with $\mathrm{dim}(V_v) = n_v := (\prod_{u \in \mathrm{ch}(v)} r_u)r_v$. Note that the leaves $\mathcal{L}(T) =: \mathcal{L}$ of $T$ are precisely the TTN leaves
$\mathscr L$ of $\mathscr T$. For every
$\lambda\in\mathscr L=\mathcal L(T)$, we attach the physical space $V_\lambda=\mathbb C^{d_\lambda}$ from $\boldsymbol{V}$. We equip all vertex spaces with the standard symmetric bilinear forms and use the resulting identifications $E_v\simeq E_v^*$.

\begin{figure}[htbp]
\centering
\setlength{\abovecaptionskip}{3pt}
\setlength{\belowcaptionskip}{-3pt}

\begin{subfigure}[t]{0.48\textwidth}
\centering
\begin{tikzpicture}[
        scale=1,
        every node/.style={font=\small},
        leaf/.style={circle, draw, minimum size=8mm},
        root/.style={circle, draw, fill=gray!50, minimum size=8mm},
        node/.style={circle, draw, fill=gray!20, minimum size=8mm},
        edge/.style={line width=0.6pt},
    ]

    \node[root] (r) at (0,0) {$\rho$};

    \node[node] (v11) at (-2,-1.5) {$v_1$};
    \node[node] (v12) at ( 2,-1.5) {$v_2$};

    \node[leaf] (l21) at (-3,-4) {$\lambda_1$};
    \node[leaf] (l22) at (-1,-4) {$\lambda_2$};
    \node[leaf] (l23) at ( 1,-4) {$\lambda_3$};
    \node[leaf] (l24) at ( 3,-4) {$\lambda_4$};

    \draw[edge] (r) -- 
        node[midway, above left] {$\CC^{r_{v_1}}$} (v11);
    \draw[edge] (r) -- 
        node[midway, above right, fill=white] {$\CC^{r_{v_2}}$} (v12);

    \draw[edge] (v11) -- 
        node[midway, above left] {$\CC^{r_{\lambda_1}}$} (l21);
    \draw[edge] (v11) -- 
        node[midway, above right] {$\CC^{r_{\lambda_2}}$} (l22);
    \draw[edge] (v12) -- 
        node[midway, above left] {$\CC^{r_{\lambda_3}}$} (l23);
    \draw[edge] (v12) -- 
        node[midway, above right] {$\CC^{r_{\lambda_4}}$} (l24);

    \pgfresetboundingbox
    \path[use as bounding box] (-4,-4.8) rectangle (4,0.8);

\end{tikzpicture}
\caption{Tree tensor network $\mathrm{TTN}_{\mathscr{T}, \mathbf{V}, \mathbf{r}}$.}
\label{fig:tree_tensor_network}
\end{subfigure}
\hfill
\begin{subfigure}[t]{0.48\textwidth}
\centering
\begin{tikzpicture}[
  scale=1,
  transform shape,
  x=1.0cm,
  y=1.0cm,
  every node/.style={font=\small},
  leaf/.style={circle, fill, inner sep=1.6pt},
  internal/.style={circle, draw, inner sep=1.6pt},
  rank/.style={circle, draw, fill=gray!50, inner sep=1.6pt},
  edge/.style={line width=0.6pt}
]

\node[internal, label=above:$\rho$] (q) at (0,0) {};

\node[rank, label=left:$h_{v_1}$]  (h11) at (-1,-1) {};
\node[rank, label=right:$h_{v_2}$] (h12) at ( 1,-1) {};

\node[internal, label=left:$v_1$]  (q11) at (-2,-2) {};
\node[internal, label=right:$v_2$] (q12) at ( 2,-2) {};

\node[rank, label=left:$h_{\lambda_1}$]  (h21) at (-3,-3) {};
\node[rank, label=right:$h_{\lambda_2}$] (h22) at (-1,-3) {};
\node[rank, label=left:$h_{\lambda_3}$]  (h23) at ( 1,-3) {};
\node[rank, label=right:$h_{\lambda_4}$] (h24) at ( 3,-3) {};

\node[leaf, label=below:$\lambda_1$] (l1) at (-3,-4) {};
\node[leaf, label=below:$\lambda_2$] (l2) at (-1,-4) {};
\node[leaf, label=below:$\lambda_3$] (l3) at ( 1,-4) {};
\node[leaf, label=below:$\lambda_4$] (l4) at ( 3,-4) {};

\draw[edge] (q) -- (h11);
\draw[edge] (q) -- (h12);
\draw[edge] (q11) -- (h11);
\draw[edge] (q12) -- (h12);

\draw[edge] (q11) -- (h21);
\draw[edge] (q11) -- (h22);
\draw[edge] (q12) -- (h23);
\draw[edge] (q12) -- (h24);

\draw[edge] (l1) -- (h21);
\draw[edge] (l2) -- (h22);
\draw[edge] (l3) -- (h23);
\draw[edge] (l4) -- (h24);

\pgfresetboundingbox
\path[use as bounding box] (-4,-4.8) rectangle (4,0.8);

\end{tikzpicture}
\caption{Spaced tree $T$.}
\label{fig:Markov_spaced_tree}
\end{subfigure}

\vspace{0.5em}

\caption{Comparison of a TTN of order four and its corresponding spaced tree.}
\label{fig:TTN_vs_Markov_tree_model}
\end{figure}

Next we describe $\mathrm{Rep}(T)$, namely, the parameter space for the map $\Psi_T$. To each edge $\{\lambda, h_{\lambda}\}$ we attach $A^\lambda \in \mathcal{A}_\lambda \simeq \mathrm{Mat}_{d_\lambda \times r_\lambda}$ for $\lambda \in \mathcal{L}(T)$.  Otherwise, any other edge is of the form $\{h_u, v\}$, where $u$ is a child of internal node $v$ in $\mathscr{T}$ or it is $\{v, h_v\}$ (see Figure \ref{fig:Markov_spaced_tree}). We assign matrices $M^{h_u}  \in \mathrm{Mat}_{r_u \times n_v}$ to $\{h_u, v\}$ and matrices $M^v \in \mathrm{Mat}_{r_v \times n_v}$ to $\{v, h_v\}$. The parameter space $\mathrm{Rep}(T)$ is equal to the direct product of all these matrix spaces, one per edge $\mathcal{E}(T)$ of the tree $T$. 
\begin{example}
    The parameter space for the Markov model on spaced tree $T$ in Figure \ref{fig:Markov_spaced_tree} is
\begin{align*}
    \mathrm{Rep}(T) = \{A&=(A^{\lambda_1}, A^{\lambda_2}, A^{\lambda_3}, A^{\lambda_4}, M^{h_{\lambda_1}}, M^{h_{\lambda_2}}, M^{h_{\lambda_3}}, M^{h_{\lambda_4}}, M^{v_1}, M^{v_2}, M^{h_{v_1}}, M^{h_{v_2}})~:~\\
    &A^{\lambda_i} \in \mathrm{Mat}_{d_{\lambda_i} \times r_{\lambda_i}}, M^{h_{\lambda_1}}\in\mathrm{Mat}_{r_{\lambda_1} \times n_{v_1}}, M^{h_{\lambda_2}}\in\mathrm{Mat}_{r_{\lambda_2} \times n_{v_1}}, M^{h_{\lambda_3}}\in\mathrm{Mat}_{r_{\lambda_3}\times n_{v_2}}, \\
    &M^{h_{\lambda_4}}\in\mathrm{Mat}_{r_{\lambda_4} \times n_{v_2}}, M^{v_i}\in\mathrm{Mat}_{r_{v_i} \times n_{v_i}}, M^{h_{v_i}}\in\mathrm{Mat}_{r_{v_i} \times n_{\rho}}\}.
\end{align*}
\end{example}

\TTNisSpacedTree*
\begin{proof}
We prove the statement by showing that the image of $\widehat{\Phi}_{\mathscr{T}, \mathbf{V}, \mathbf{r}}$ is equal to the image of $\Psi_T$. For this, we will link the parameter spaces of both maps by realizing that
\begin{equation*}
    \mathrm{Rep}(T) \overset{\Theta}{\longrightarrow} \mathcal{P}_{\mathscr{T}, \mathbf{V}, \mathbf{r}} \overset{\widehat{\Phi}_{\mathscr{T}, \mathbf{V}, \mathbf{r}}}{\longrightarrow} W,
\end{equation*}
with $\Psi_T = \widehat{\Phi}_{\mathscr{T}, \mathbf{V}, \mathbf{r}} \circ \Theta$ and where $\Theta \, : \, \mathrm{Rep}(T) \longrightarrow \mathcal{P}_{\mathscr{T}, \mathbf{V}, \mathbf{r}}$ is surjective.
\\
To see this, we split $T$ into subtrees. These are obtained by cutting the tree $T$ at the  rank nodes $h_v$, such that the result is a collection of star trees $T_v$ for each $v \in \mathscr{T}$. The tree $T_\lambda$ for the leaf $\lambda \in \mathcal{L}$ is just the edge $\{\lambda, h_\lambda\}$. For a non-leaf non-rank node $v$, the tree $T_v$ is the star tree with $v$ as its unique internal node and the set of leaves $\mathcal{L}(T_v) = (h_u)_{u \in \mathrm{ch}(v)} \cup h_v$, which we call adjacent rank nodes. We note that when $v=\rho$ the leaf $h_\rho$ does not exist. Observe that
for each non-rank node $v$ the parameter space $\mathrm{Rep}(T_v)$ consists of the set of matrices that are associated to the edges included in the subtree $T_v$. The edge sets $\mathcal{E}(T_v)$ are pairwise disjoint and their union is $\mathcal{E}(T)$. Therefore, we can write $\mathrm{Rep}(T) \simeq \prod_{v\in \mathscr{T}}\mathrm{Rep}(T_v)$. With this factorization it is natural to define $\Theta:=\prod_{v \in \mathscr{T}} \Psi_{T_v}$. The image of $\Psi_{T_\lambda}$ consists of $A^\lambda \in \mathrm{Mat}_{ d_\lambda\times r_\lambda} \simeq \mathcal{A}_\lambda$ 
for each leaf $\lambda \in \mathcal L(T)$, as $T_\lambda$ consists of only one edge. For $v\in\mathscr{N}\setminus\{\rho\}$, the image of
$\Psi_{T_v}$ consists of (see Proposition \ref{prop:spaced-tree-parametrization} and \eqref{eq:psi_map_explicit})
\begin{equation}\label{eq:matrix_space_T_v}
    M^{T_v} := \sum_{j_v=1}^{n_v} \left(\bigotimes_{u \in \mathrm{ch}(v)}  M^{h_u}_{j_v} \right)\otimes M^{v}_{j_v}\in \left( \bigotimes_{u \in \mathrm{ch}(v)} E_u \right)\otimes E_v^* = \mathcal A_v,
\end{equation}
where $M^\bullet_{j_v}$ is the $j_v$-th column of $M^\bullet$. At the root,
\begin{equation*}
    M^{T_\rho}
:=
\sum_{j_\rho=1}^{n_\rho}
\bigotimes_{u\in\operatorname{ch}(\rho)}M_{j_\rho}^{h_u}
\in
\bigotimes_{u\in\operatorname{ch}(\rho)}E_u
=
\mathcal A_\rho.
\end{equation*}

Since $n_v
=
\dim(\mathcal A_v)
=
(\prod_{u\in\operatorname{ch}(v)} r_u)r_v$,
a product basis of $\mathcal A_v$ consists of exactly $n_v$ pure
tensors and every element of $\mathcal A_v$ is a linear
combination of at most $n_v$ pure tensors. Choosing the factors in
such a basis expansion as the columns of the matrices $M^{h_u}$
and $M^v$, and absorbing each scalar coefficient into one of these
columns, realizes the given tensor through
\eqref{eq:matrix_space_T_v}. Thus, $\operatorname{im}(\Psi_{T_v})=\mathcal A_v$.
The root case is analogous, with the factor $E_v^*$ omitted.
Consequently, each map $\Psi_{T_v}$ is surjective onto
$\mathcal A_v$, and hence 
the product map
$\Theta
=
\prod_{v\in\mathscr T}\Psi_{T_v}$
is surjective onto $\mathcal{P}_{\mathscr{T}, \mathbf{V}, \mathbf{r}}$.

The image of $\Psi_T$ for $A \in \mathrm{Rep}(T)$ can be computed using
Proposition \ref{prop:spaced-tree-parametrization}. Applied to our spaced tree $T$, the sum in \eqref{eq:spaced-tree-param} is over all internal nodes in $T$ (both the internal nodes in $\mathscr{T}$ and the rank nodes) and 
the matrices appearing in the formula are $A^\lambda$ where $\lambda$ is a leaf, $M^v$ where $v$ is 
an internal node of $T$ that is not a rank node, and $M^{h_u}$ where $u$ is a child of such an internal node $v$:
\begin{align}\label{eq:proof_eq_correspondence}
    (\Psi_T(A))_{(j_\lambda)_{\lambda \in \mathcal{L}}} 
    &= \sum_{(\alpha_v)_{v \in \mathscr{T}\setminus \{\rho\}}} 
\sum_{(j_v)_{v \in \mathscr{N}}}
\left( \prod_{u \in \mathrm{ch}(\rho)} M^{h_u}_{\alpha_u,j_\rho}
\prod_{v\in\mathscr{N}\setminus\{\rho\}}
\left( \prod_{u \in \mathrm{ch}(v)} M^{h_u}_{\alpha_u, j_v} 
\right)
M^v_{\alpha_v, j_v}
\right)
\prod_{\lambda\in\mathscr L}    A^\lambda_{j_\lambda,\alpha_\lambda} \nonumber\\
&= \sum_{(\alpha_v)_{v \in \mathscr{T}\setminus \{\rho\}}} 
A^\rho_{(\alpha_u)_{u\in\operatorname{ch}(\rho)}}
    \prod_{v\in\mathscr N\setminus\{\rho\}}
    A^v_{(\alpha_u)_{u\in\operatorname{ch}(v)},\alpha_v}
    \prod_{\lambda\in\mathscr L}
    A^\lambda_{j_\lambda,\alpha_\lambda}.
\end{align}
Here we again used $(\alpha_v)_{v \in \mathscr{T}\setminus \{\rho\}} \in \prod_{v \in \mathscr{T}\setminus \{\rho\}}[n_{h_{v}}]$ and $(j_v)_{v \in \mathscr{N}} \in \prod_{v \in \mathscr{N}}[n_v]$.
The second equality follows by applying $\Theta$ to the representation $A \in \mathrm{Rep}(T)$ which gives
\begin{equation*}
    \sum_{j_\rho=1}^{n_\rho} \prod_{u \in \mathrm{ch}(\rho)} M^{h_u}_{\alpha_u, j_\rho} =A^\rho_{(\alpha_u)_{u \in \mathrm{ch}(\rho)}} \quad \mbox{  and  } \quad
\sum_{j_v=1}^{n_v} 
\left(
\prod_{u \in \mathrm{ch}(v)} M^{h_u}_{\alpha_u, j_v} \right) \cdot M^v_{\alpha_v,j_v} = A^v_{(\alpha_u)_{u \in \mathrm{ch}(v)}, \alpha_v}
\end{equation*}
for $\rho$ and  $v \in \mathscr{N}\setminus \{\rho\}$, respectively. We recognize \eqref{eq:TTN_param} in \eqref{eq:proof_eq_correspondence} and together with the surjectivity of $\Theta$, we have therefore shown that 
$\mathrm{im}(\Psi_T) = \mathrm{im}(\widehat{\Phi}_{\mathscr{T}, \mathbf{V}, \mathbf{r}} \circ \Theta) 
    = \mathrm{im}(\widehat{\Phi}_{\mathscr{T}, \mathbf{V}, \mathbf{r}})$.
We can conclude that
\begin{equation*}
    \widehat{\mathcal{V}}_T = \overline{\mathrm{im}(\Psi_T)} = \overline{\mathrm{im}(\widehat{\Phi}_{\mathscr{T}, \mathbf{V}, \mathbf{r}})} = \widehat{\mathrm{TTN}}_{\mathscr{T}, \mathbf{V}, \mathbf{r}}.
\end{equation*}
This shows that as projective varieties $\mathrm{TTN}_{\mathscr{T}, \mathbf{V}, \mathbf{r}} = \V_T$.
\end{proof}

\begin{example} We present an extended example to illustrate the proof of Theorem \ref{thm:TTN=Spaced_tree}. For this we use the tree tensor network from Example \ref{ex:main_TTN} based on Figure \ref{fig:TTN_combinatorial_tree} which we replicate in Figure \ref{fig:tree_tensor_network}. Again we show that the image of $\Psi_T$ for the corresponding spaced tree $T$ coincides with the image of the parametrization $\widehat{\Phi}_{\mathscr T,\mathbf V,\mathbf r}$ for a corresponding choice of vector spaces.
Let us first recall the TTN parametrization for the tree defined in Figure \ref{fig:TTN_vs_Markov_tree_model} (see \eqref{eq:example_TTN_N_4})
\begin{equation}\label{eq:TTN_param_ex}
    (\widehat{\Phi}_{\mathscr T,\mathbf V,\mathbf r}(\mathbf{A}))_{j_{\lambda_1},j_{\lambda_2},j_{\lambda_3},j_{\lambda_4}} = \sum_{\substack{\beta_1, \beta_2 \\ \alpha_1, \alpha_2, \alpha_3, \alpha_4}} A^\rho_{\beta_1,\beta_2}A^{v_1}_{\alpha_1,\alpha_2, \beta_1}A^{v_2}_{\alpha_3,\alpha_4, \beta_2} A^{\lambda_1}_{j_{\lambda_1},\alpha_1} A^{\lambda_2}_{j_{\lambda_2},\alpha_2}A^{\lambda_3}_{j_{\lambda_3},\alpha_3} A^{\lambda_4}_{j_{\lambda_4},\alpha_4},
\end{equation}
for $\mathbf{A} \in \mathcal{P}_{\mathscr{T}, \mathbf{V}, \mathbf{r}}$ with $A^v \in \mathcal{A}_v$ for $v \in \mathscr{N}$ and $A^\lambda \in \mathcal{A}_\lambda$, and $\beta_i \in [r_{v_i}]$, $\alpha_i \in [r_{\lambda_i}]$. We again use the same notational simplification for the indices as in Example \ref{ex:main_TTN}.
Our goal is to realize this parametrization via a spaced tree $T$. 
The data attached to $T$ consists of the description of $T$ as a combinatorial tree together with the various vector spaces attached to each vertex in $T$. The combinatorial tree is obtained from $\mathscr T$ using the procedure outlined in the proof of Theorem \ref{thm:TTN=Spaced_tree}. The result is depicted in Figure \ref{fig:Markov_spaced_tree}. 
Next, we equip the nodes of this tree with vector spaces.  In Figure \ref{fig:TTN_spaced_tree_w_spaces}, we present the same spaced tree as in Figure \ref{fig:Markov_spaced_tree}, only with the nodes exchanged for the associated vector spaces. 
\begin{figure}[htbp]
\centering
\setlength{\abovecaptionskip}{3pt}
\setlength{\belowcaptionskip}{-3pt}

\begin{subfigure}[t]{0.48\textwidth}
\centering
\begin{tikzpicture}[
  scale=0.85,
  transform shape,
  x=1.0cm, y=1.0cm,
  every node/.style={font=\small},
  leaf/.style={circle, fill, inner sep=1.6pt},
  internal/.style={circle, draw, inner sep=1.6pt},
  rank/.style={circle, draw, fill=gray!50, inner sep=1.6pt},
  edge/.style={line width=0.6pt}
]
\node[internal, label=above:$V_{\rho}$] (q) at (0,0) {};

\node[rank, label=left:$E_{v_1}$] (h11) at (-1,-1) {};
\node[rank, label=right:$E_{v_2}$] (h12) at (1,-1) {};

\node[internal, label=left:$V_{v_1}$] (q11) at (-2,-2) {};
\node[internal, label=right:$V_{v_2}$] (q12) at (2,-2) {};

\node[rank, label=left:$E_{\lambda_1}$] (h21) at (-3,-3) {};
\node[rank, label=right:$E_{\lambda_2}$] (h22) at (-1,-3) {};
\node[rank, label=left:$E_{\lambda_3}$] (h23) at (1,-3) {};
\node[rank, label=right:$E_{\lambda_4}$] (h24) at (3,-3) {};

\node[leaf, label=below:$V_{\lambda_1}$] (l1) at (-3,-4) {};
\node[leaf, label=below:$V_{\lambda_2}$] (l2) at (-1,-4) {};
\node[leaf, label=below:$V_{\lambda_3}$] (l3) at (1,-4) {};
\node[leaf, label=below:$V_{\lambda_4}$] (l4) at (3,-4) {};

\draw[edge, green, line width=1pt] (q) -- (h11);
\draw[edge, green, line width=1pt] (q) -- (h12);
\draw[edge, blue, line width=1pt] (q11) -- (h11);
\draw[edge] (q12) -- (h12);
\draw[edge, blue, line width=1pt] (q11) -- (h21);
\draw[edge, blue, line width=1pt] (q11) -- (h22);
\draw[edge] (q12) -- (h23);
\draw[edge] (q12) -- (h24);
\draw[edge, red, line width=1pt] (l1) -- (h21);
\draw[edge] (l2) -- (h22);
\draw[edge] (l3) -- (h23);
\draw[edge] (l4) -- (h24);

\end{tikzpicture}
\caption{Markov spaced tree with associated vector spaces.}
\label{fig:TTN_spaced_tree_w_spaces}
\end{subfigure}

\vspace{1em}

\begin{subfigure}[t]{0.30\textwidth}
\centering
\begin{tikzpicture}[
  scale=0.9,
  transform shape,
  x=1.0cm, y=1.0cm,
  every node/.style={font=\small},
  leaf/.style={circle, fill, inner sep=1.6pt},
  internal/.style={circle, draw, inner sep=1.6pt},
  rank/.style={circle, draw, fill=gray!50, inner sep=1.6pt},
  edge/.style={line width=0.6pt}
]
\node[rank, label=below:$E_{\lambda_1}$] (h21) at (0,0) {};
\node[leaf, label=below:$V_{\lambda_1}$] (l1) at (-1,0) {};
\draw[edge, red, line width=1pt] (h21) -- (l1);
\end{tikzpicture}
\caption{Subtree $T_{\lambda_1}$.}
\label{fig:Tl1}
\end{subfigure}
\hfill
\begin{subfigure}[t]{0.30\textwidth}
\centering
\begin{tikzpicture}[
  scale=0.9,
  transform shape,
  x=1.0cm, y=1.0cm,
  every node/.style={font=\small},
  leaf/.style={circle, fill, inner sep=1.6pt},
  internal/.style={circle, draw, inner sep=1.6pt},
  rank/.style={circle, draw, fill=gray!50, inner sep=1.6pt},
  edge/.style={line width=0.6pt}
]
\node[rank, label=below:$E_{v_1}$] (h11) at (-1,0) {};
\node[rank, label=below:$E_{v_2}$] (h12) at (1,0) {};
\node[internal, label=below:$V_{\rho}$] (q0) at (0,0) {};

\draw[edge, green, line width=1pt] (h11) -- (q0);
\draw[edge, green, line width=1pt] (h12) -- (q0);
\end{tikzpicture}
\caption{Subtree $T_{\rho}$.}
\label{fig:Tq0}
\end{subfigure}
\hfill
\begin{subfigure}[t]{0.30\textwidth}
\centering
\begin{tikzpicture}[
  scale=0.9,
  transform shape,
  x=1.0cm, y=1.0cm,
  every node/.style={font=\small},
  leaf/.style={circle, fill, inner sep=1.6pt},
  internal/.style={circle, draw, inner sep=1.6pt},
  rank/.style={circle, draw, fill=gray!50, inner sep=1.6pt},
  edge/.style={line width=0.6pt}
]
\node[rank, label=above:$E_{\lambda_1}$] (h21) at (-1,0) {};
\node[rank, label=right:$E_{\lambda_2}$] (h22) at (0,-1) {};
\node[rank, label=above:$E_{v_1}$] (h11) at (1,0) {};
\node[internal, label=above:$V_{v_1}$] (q11) at (0,0) {};

\draw[edge, blue, line width=1pt] (h21) -- (q11);
\draw[edge, blue, line width=1pt] (h22) -- (q11);
\draw[edge, blue, line width=1pt] (h11) -- (q11);
\end{tikzpicture}
\caption{Subtree $T_{v_1}$.}
\label{fig:T_q11}
\end{subfigure}

\vspace{0.5em}

\caption{The spaced tree associated to the TTN in Figure \ref{fig:TTN_combinatorial_tree}, together with its associated vector spaces and the highlighted subtrees $T_{\lambda_1}$, $T_{\rho}$, and $T_{v_1}$.}
\label{fig:TTN_spaced_tree_combined}
\end{figure}
\\
We can now deconstruct $T$ into subtrees. Each subtree will give a representation space $\mathrm{Rep}(T_v)$ for $v \in \{\rho, v_1, v_2, \lambda_1, \lambda_2, \lambda_3, \lambda_4\}$. 
For the terminal leaves $\lambda_i$ all subtrees $T_{\lambda_i}$ have the same structure. As an example, we highlight the subtree $T_{\lambda_1}$ in Figure \ref{fig:Tl1}. Since these subtrees only include one edge $\{\lambda_i, h_{\lambda_i}\}$, they are represented via $\mathrm{Rep}(T_{\lambda_i}) = \{A^{\lambda_i}: A^{\lambda_i} \in \mathrm{Mat}_{n_{\lambda_i} \times n_{h_{\lambda_i}}}\}$ and 
\begin{equation*}
    \Psi_{T_{\lambda_i}}(A^{\lambda_i}) = A^{\lambda_i} = (A^{\lambda_i}_{j_{\lambda_i},\alpha_i}) \in V_{\lambda_i} \otimes E_{\lambda_i}^* = \mathcal{A}_{\lambda_i}.
\end{equation*}
In the next layer, we consider the subtree $T_{v_1}$ together with the associated matrices on the three edges: $M^{h_{\lambda_1}}\in\mathrm{Mat}_{n_{h_{\lambda_1}} \times n_{v_1}}$, $M^{h_{\lambda_2}}\in\mathrm{Mat}_{n_{h_{\lambda_2}} \times n_{v_1}}$, $M^{v_1}\in\mathrm{Mat}_{n_{h_{v_1}} \times n_{v_1}}$. The representation map for this star tree is 
\begin{equation*}
    \Psi_{T_{v_1}}(M^{h_{\lambda_1}}, M^{h_{\lambda_2}}, M^{v_1}) = M^{T_{v_1}} = (M^{T_{v_1}}_{\alpha_1, \alpha_2, \beta_1})\in E_{\lambda_1} \otimes E_{\lambda_2} \otimes E_{v_1}^* = \mathcal{A}_{v_1},
\end{equation*}
with $M^{T_{v_1}}_{\alpha_1, \alpha_2,\beta_1} = \sum_{j_{v_1}=1}^{n_{v_1}}M^{h_{\lambda_1}}_{\alpha_1, j_{v_1}}M^{h_{\lambda_2}}_{\alpha_2, j_{v_1}}M^{v_1}_{\beta_1,j_{v_1}}$. Note that this corresponds exactly to the construction in \eqref{eq:matrix_space_T_v}. Similarly, the representation map of the star tree $T_{v_2}$ centered around node $v_2$ is 
\begin{equation*}
    \Psi_{T_{v_2}}(M^{h_{\lambda_3}}, M^{h_{\lambda_4}}, M^{v_2}) = M^{T_{v_2}} = (M^{T_{v_2}}_{\alpha_3, \alpha_4, \beta_2})\in E_{\lambda_3} \otimes E_{\lambda_4} \otimes E_{v_2}^* = \mathcal{A}_{v_2},
\end{equation*}
Finally, we consider the star tree centered around $\rho$. This gives the representation map
\begin{equation*}
    \Psi_{T_{\rho}}(M^{h_{v_1}}, M^{h_{v_2}}) = M^{T_{\rho}} = (M^{T_{\rho}}_{\beta_1, \beta_2})\in E_{v_1} \otimes E_{v_2} = \mathcal{A}_\rho.
\end{equation*}
With this we have the complete set of maps for the subtrees:
\begin{equation*}
    \{\Psi_{T_{\rho}},\Psi_{T_{v_1}}, \Psi_{T_{v_2}}, \Psi_{T_{\lambda_1}}, \Psi_{T_{\lambda_2}}, \Psi_{T_{\lambda_3}}, \Psi_{T_{\lambda_4}}\}.
\end{equation*}
Given the set of subtrees we can now construct the full map $\Psi_T$ by applying Proposition \ref{prop:spaced-tree-parametrization}:
\begin{align*}
    &(\Psi_T(A))_{j_{\lambda_1},j_{\lambda_2},j_{\lambda_3},j_{\lambda_4}} = \sum_{(j_v)_{v\in T \setminus \mathcal{L}(T)}} \prod_{\{u,w\} \in \mathcal{E}(T)} (A_{uw})_{j_u j_w} \\
    &= \sum_{\substack{j_{v_1}, j_{v_2}, j_\rho, \\ j_{h_{\lambda_1}}, j_{h_{\lambda_2}}, j_{h_{\lambda_3}}, j_{h_{\lambda_4}}, \\ j_{h_{v_1}}, j_{h_{v_2}}}}
    M^{h_{v_1}}_{j_\rho, j_{h_{v_1}}} M^{h_{v_2}}_{j_\rho, j_{h_{v_2}}}M^{v_1}_{j_{h_{v_1}}, j_{v_1}}M^{v_2}_{j_{h_{v_2}}, j_{v_2}}M^{h_{\lambda_1}}_{j_{v_1}, j_{h_{\lambda_1}}}M^{h_{\lambda_2}}_{j_{v_1}, j_{h_{\lambda_2}}}M^{h_{\lambda_3}}_{j_{v_2}, j_{h_{\lambda_3}}}M^{h_{\lambda_4}}_{j_{v_2}, j_{h_{\lambda_4}}} \\ & \qquad\qquad\qquad\qquad\qquad \times   A^{\lambda_1}_{j_{\lambda_1},j_{h_{\lambda_1}}}A^{\lambda_2}_{j_{\lambda_2},j_{h_{\lambda_2}}}A^{\lambda_3}_{j_{\lambda_3},j_{h_{\lambda_3}}}A^{\lambda_4}_{j_{\lambda_4},j_{h_{\lambda_4}}}\\
\end{align*}
Using the map $\Theta$, more specifically the maps $\Psi_{T_{\rho}},\Psi_{T_{v_1}}$ and $\Psi_{T_{v_2}}$, we obtain elements in the parameter spaces $\mathcal{A}_v,\,\mathcal{A}_\rho$ of $\mathcal{P}_{\mathscr{T}, \mathbf{V}, \mathbf{r}}$ from elements of $\mathrm{Rep}(T)$
\begin{align*}
    M^{T_{v_1}}_{j_{h_{v_1}}, j_{h_{\lambda_1}}, j_{h_{\lambda_2}}} &= \sum_{j_{v_1}=1}^{n_{v_1}}M^{v_1}_{j_{h_{v_1}}, j_{v_1}}M^{h_{\lambda_1}}_{j_{v_1}, j_{h_{\lambda_1}}}M^{h_{\lambda_2}}_{j_{v_1}, j_{h_{\lambda_2}}}, \\
    M^{T_{v_2}}_{j_{h_{v_2}}, j_{h_{\lambda_3}}, j_{h_{\lambda_4}}} &= \sum_{j_{v_2}=1}^{n_{v_2}}M^{v_2}_{j_{h_{v_2}}, j_{v_2}}M^{h_{\lambda_3}}_{j_{v_2}, j_{h_{\lambda_3}}}M^{h_{\lambda_4}}_{j_{v_2}, j_{h_{\lambda_4}}}, \\
    M^\rho_{j_{h_{v_1}}, j_{h_{v_2}}} &= \sum_{j_\rho=1}^{n_\rho}M^{h_{v_2}}_{j_\rho, j_{h_{v_2}}}M^{h_{v_1}}_{j_\rho, j_{h_{v_1}}},
\end{align*}
and rename $j_{h_{\lambda_i}} = \alpha_i$, $j_{h_{v_i}} = \beta_i$. Translating the spaced-tree vertex dimensions into TTN ranks, that is, $r_{v_i} = n_{h_{v_i}}$ and $r_{\lambda_i} = n_{h_{\lambda_i}}$, we can rewrite the expression as
\begin{align*}
    (\Psi_T(A))_{j_{\lambda_1},j_{\lambda_2},j_{\lambda_3},j_{\lambda_4}} = \sum_{\substack{\beta_1, \beta_2 \\ \alpha_1, \alpha_2, \alpha_3, \alpha_4}} M^\rho_{\beta_1, \beta_2}M^{T_{v_1}}_{\beta_1, \alpha_1, \alpha_2}M^{T_{v_2}}_{\beta_2, \alpha_3, \alpha_4}A^{\lambda_1}_{j_{\lambda_1},\alpha_1} A^{\lambda_2}_{j_{\lambda_2},\alpha_2}A^{\lambda_3}_{j_{\lambda_3},\alpha_3} A^{\lambda_4}_{j_{\lambda_4},\alpha_4}.
\end{align*}
Identifying $M^{T_{\rho}} = A^\rho$, $M^{T_{v_1}} = A^{v_1}$ and $M^{T_{v_2}} = A^{v_2}$ from the parametrization in \eqref{eq:TTN_param_ex}, we see that the image of $\widehat{\Phi}_{\mathscr T,\mathbf V,\mathbf r}$ is equal to the image of $\Psi_T$. 
\end{example}

\section{Equations of tree tensor network varieties}  \label{sec:equations}
Our aim in this section is to prove that 
the equations defining the tree tensor network variety $\mathrm{TTN}_{\mathscr{T}, \mathbf{V}, \mathbf{r}}$ set-theoretically actually generate the prime ideal $\mathcal{I}(\mathrm{TTN}_{\mathscr{T}, \mathbf{V}, \mathbf{r}})$.  Recall from Proposition \ref{prop:TTN_set_theoretic} that $\mathrm{TTN}_{\mathscr{T}, \mathbf{V}, \mathbf{r}}$ is set-theoretically cut out by the polynomials
\begin{equation} \label{eq:ideal-generators}
     \bigcup_{v \in \mathscr{T}\setminus \{\rho\}} \{(r_v+1)\textrm{-minors of  } \psi^{(v)}\},
\end{equation}
where $\psi^{(v)}$ is a flattening \eqref{eq:flattening} of the 
tensor $\psi \in \bigotimes_{\lambda \in \mathscr{L}} V_\lambda = W$.
In coordinates, 
$\psi^{(v)}$ is a 
$(\prod_{\lambda \in \mathscr{L}(v)} \dim(V_\lambda)) \times 
(\prod_{\lambda \in\mathscr{L}\setminus \mathscr{L}(v)}\dim(V_\lambda))$ matrix where the 
entry in the row indexed by $(j_\lambda \, : \, \lambda \in \mathscr{L}(v))$ and the column indexed
by $(j_\lambda \, : \, \lambda \in \mathscr{L} \setminus \mathscr{L}(v))$ is 
equal to $\psi_{(j_\lambda)_{\lambda \in \mathscr{L}}}$. 

We prove the statement by using Theorem \ref{thm:TTN=Spaced_tree} which states that as projective varieties $\mathrm{TTN}_{\mathscr{T},\mathbf{V}, \mathbf{r}} = \mathcal{V}_T$. Therefore, it suffices to show that the homogeneous prime ideal of the spaced tree $T$ corresponding to $\mathrm{TTN}_{\mathscr{T},\mathbf{V}, \mathbf{r}}$ is generated by the above polynomials. This transition from a TTN parametrization to the general Markov model on a spaced tree allows us to apply Theorem \ref{thm:ideal}. 
\\
Before stating the result, we make some observations that will be necessary. Recall Definition \ref{def:contracted_star_tree} of the contracted star tree $\mathfrak{s}_q(T)$ for 
$q \in \mathrm{int}(T)$. The tree $\mathfrak{s}_q(T)$ comes with a set of associated leaf classes $\mathcal C(q)$, each leaf of $\mathfrak{s}_q(T)$ is labeled by one of these classes, and $D_q = |\mathcal C(q)|$ is the degree of the vertex $q$. Let $\mathcal{V}_{\mathfrak{s}_q(T)}$ be the associated spaced-tree variety (see Remark \ref{rmk:s_q_variety}). We distinguish two kinds of vertices in the following. First, we consider the case when $q=h_v$ is a rank node with $D_{h_v}=2$ and $\mathrm{dim}(V_{h_v}) = n_{h_v} =: r_v$, where $v$ is the unique child of $h_v$. For a rank node, the star tree $\mathfrak{s}_{h_v}(T)$ introduces leaf classes $\mathcal C(h_v) = \{C, \overline{C}\}$, where $C$ is an element in $\mathcal C(v)$ and $\overline{C}$ is the union of the remaining classes of $v$. The associated variety $\V_{\mathfrak{s}_{h_v}(T)}$ is
\begin{align*}
    \V_{\mathfrak{s}_{h_v}(T)} &= \sigma_{r_v}\left(\mathrm{Seg}\left( \PP\left(K_C\right)\times \PP\left(K_{\overline{C}}\right)\right)\right) \subseteq \PP\left( K_T \right) \\
    &=\{[M] \in \PP\left(K_C  \otimes K_{\overline{C}}\right) : \mathrm{rank}(M) \leq r_v \},
\end{align*}
that is, it is the determinantal variety of matrices with rank at most $r_v$. Again, we denoted $K_C =\bigotimes_{v\in C} V_v$. These varieties are cut out by the homogeneous prime ideals generated by the minors of size $(r_v +1)$ \cite{bruns1988}, 
\begin{equation*}
    I_v :=\mathcal I(\V_{\mathfrak{s}_{h_v}(T)})
= \Big\langle (r_v + 1)\textrm{-minors of }\psi^{(v)} \Big\rangle.
\end{equation*}
These ideals are generated by precisely the polynomials that appear in 
 \eqref{eq:ideal-generators}.
Second, we consider the case of non-rank non-leaf vertices $q = v \in \mathrm{int}(T)$ with $\mathrm{dim}(V_v) = n_v$. The star tree $\mathfrak{s}_v(T)$ introduces leaf
classes $\mathcal C(v) = \{C_1, ..., C_{D_v}\}$ with $D_v \geq 2$. The variety is given as the $n_v$-th secant variety of the Segre variety of $D_v$ projective spaces,
\begin{equation} \label{eq:star-tree-secant}
    \V_{\mathfrak{s}_v(T)} = \sigma_{n_v}\left(\mathrm{Seg}\left(\bigtimes_{C \in \mathcal{C}(v)} \PP\left(K_C\right)\right)\right) \subseteq \PP\left( K_T \right),
\end{equation}
or, equivalently, as the projective variety of order-$D_v$ tensors of border rank at most $n_v$. It contains order-$D_v$ tensors of tensor rank at most $n_v$ as a Zariski dense set.  
Since $\V_{\mathfrak{s}_v(T)}$ is in general not a determinantal variety, there is no uniform determinantal description of this ideal in general. In this case we simply denote the ideal as $\mathcal I(\V_{\mathfrak{s}_v(T)})$. 
At this point we know that the ideal for the general Markov model on spaced tree $T$ is (Theorem \ref{thm:ideal})
\begin{equation*}
    \mathcal{I}(\mathcal{V}_T) = \sum_{q \in \mathrm{int}(T)}\mathcal{I}(\V_{\mathfrak{s}_q(T)}),
\end{equation*}
where the sum includes both the rank nodes $h$ and the non-rank internal nodes $v$. 
Comparing this with \eqref{eq:ideal-generators}, we need to show that the unknown ideals $\mathcal I(\V_{\mathfrak{s}_v(T)})$ are irrelevant in the above sum, and, in fact, already included in the ideals $I_v$.
To do this we will first prove two lemmas and a corollary.
\begin{lemma} \label{lem:bound-tensor-rank}
Let $S \in \bigotimes_{i=1}^n \CC^{d_i}$ be a tensor where $n\geq 2$, and let $S^{(i)}$ be the flattening corresponding to the partition $\{i\} \mid \{1, \ldots, \hat{i}, \ldots, n\}$ for $i=1,\ldots, n$. If $\rank(S^{(i)}) \leq r_i$ with $r_i \leq d_i$ for $i=1,\ldots,n$, then $\mathrm{rank}(S) \leq \prod_{i=1}^{n} r_i$.
\end{lemma}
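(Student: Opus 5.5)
The plan is to bound the tensor rank through a Tucker-type (subspace) factorization of $S$, exactly mirroring Proposition \ref{prop:TTN_set_theoretic} and Proposition \ref{prop:contraction parametrization} for the star tree of Proposition \ref{prop:S_is_TTN}. For each $i\in[n]$ set $U_i:=\operatorname{im}(S^{(i)})\subseteq \CC^{d_i}$, so that $s_i:=\dim U_i=\rank(S^{(i)})\leq r_i$. The intersection argument already used in the proof of Proposition \ref{prop:TTN_set_theoretic} gives
\[
S\in \bigcap_{i=1}^n\Big(U_i\otimes \bigotimes_{j\neq i}\CC^{d_j}\Big)=\bigotimes_{i=1}^n U_i .
\]
I would justify this intersection identity by choosing, for each $i$, a basis of $\CC^{d_i}$ that extends a basis of $U_i$. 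In the induced product basis, membership of $S$ in the $i$-th space says that every coefficient whose $i$-th index lies outside the basis of $U_i$ vanishes. Intersecting over all $i$ then leaves only coefficients whose every index lies in the chosen basis of the corresponding $U_i$.

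Next, fix bases $\{u^i_1,\ldots,u^i_{s_i}\}$ of $U_i$ and expand
\[
S=\sum_{\alpha_1=1}^{s_1}\cdots\sum_{\alpha_n=1}^{s_n} C_{\alpha_1,\ldots,\alpha_n}\, u^1_{\alpha_1}\otimes\cdots\otimes u^n_{\alpha_n},
\]
with a core tensor $C\in\bigotimes_i\CC^{s_i}$. Each summand is a rank-one tensor (possibly zero). There are $\prod_i s_i\leq\prod_i r_i$ summands, so $\rank(S)\leq\prod_{i=1}^n r_i$. The hypothesis $r_i\leq d_i$ is not actually needed for the bound, though it is consistent with $s_i\leq d_i$. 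Degenerate cases cause no trouble: if $S=0$ the rank is $0$, and if some $s_i=0$ then $S=0$ as well.

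The only step needing care is the intersection identity $\bigcap_i (U_i\otimes\bigotimes_{j\ne i}\CC^{d_j})=\bigotimes_i U_i$, which the paper used without proof. The adapted-basis coefficient argument above settles it directly, so I do not expect any genuine obstacle.
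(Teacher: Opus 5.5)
Your proposal is correct and follows the paper's proof: take $U_i\subseteq\CC^{d_i}$ with $\dim U_i\leq r_i$ and $S\in U_i\otimes\bigotimes_{j\neq i}\CC^{d_j}$, conclude $S\in\bigotimes_i U_i$, and expand in bases to get at most $\prod_i r_i$ rank-one summands. The only differences are cosmetic. You use actual bases of size $s_i$ where the paper uses zero-padded spanning families of size $r_i$. You also give an adapted-basis justification of the intersection identity, which the paper uses without proof.
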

\begin{proof}
For all $i=1, \ldots, n$, there exists a subspace $U_i \subseteq \CC^{d_i}$ with $\dim(U_i) \leq r_i$ such that
\begin{equation*}
    S \in U_i \otimes \bigotimes_{j\neq i} \CC^{d_j}.
\end{equation*}
Hence $S \in \bigotimes_{i=1}^n U_i$. Now, let $\{u^i_1, ..., u^i_{r_i}\}$ be a zero-padded spanning family for $U_i$. Then we can write 
\begin{equation*}
    S = \sum_{j_1=1}^{r_1}\cdots \sum_{j_n=1}^{r_n}\alpha_{j_1,\ldots, j_n} u^1_{j_1} \otimes 
    \cdots \otimes u^{n}_{j_n} \,\, \in \bigotimes_{i = 1}^{n} U_i.
\end{equation*}
This means that the tensor rank of $S$ is at most $\prod_{i=1}^n r_i$.
\end{proof}
\begin{corollary} \label{cor:variety-inclusion}
Let $v \in \mathrm{int}(T)$ 
be a non-rank vertex
in the spaced tree $T$  with $D_v\geq 2$, 
and let 
$\{h_1, ..., h_{D_v}\}$ be the adjacent rank nodes. Then
\begin{equation} \label{eq:star-tree-containment} 
\bigcap_{i=1}^{D_v} \V_{\mathfrak{s}_{h_i}(T)} \subseteq \V_{\mathfrak{s}_v(T)} \quad \text{and} \quad \mathcal{I}(\V_{\mathfrak{s}_v(T)}) \subseteq  = 
     \sqrt{\sum_{i=1}^{D_v} \mathcal{I}(\V_{\mathfrak{s}_{h_i}(T)})}.
\end{equation}  
\end{corollary}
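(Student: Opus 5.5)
The plan is to prove the set-theoretic inclusion first and then deduce the ideal statement from it via the Nullstellensatz.

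Fix a non-rank internal vertex $v$ with leaf classes $\mathcal C(v)=\{C_1,\ldots,C_{D_v}\}$ and adjacent rank nodes $h_1,\ldots,h_{D_v}$. By construction of $T$, each $h_i$ lies on the edge from $v$ toward the component containing the leaves of $C_i$. The contracted star tree $\mathfrak{s}_{h_i}(T)$ therefore has exactly the two leaf classes $C_i$ and $\overline{C_i}=\bigcup_{j\neq i}C_j$. Hence, as recalled above,
\[
\V_{\mathfrak{s}_{h_i}(T)}=\{[\psi]\in\PP(K_T):\ \rank(\psi^{C_i\mid\overline{C_i}})\le n_{h_i}\},
\]
where $n_{h_i}=\dim V_{h_i}$ is the rank attached to that edge ($r_u$ for a child $u$, or $r_v$ for the edge to the parent).

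For the inclusion, take $[\psi]$ in the intersection and view $\psi$ as an order-$D_v$ tensor in $\bigotimes_{i=1}^{D_v}K_{C_i}$. The $i$-th one-versus-rest flattening of this order-$D_v$ tensor is exactly $\psi^{C_i\mid\overline{C_i}}$, so its rank is at most $n_{h_i}$. Lemma \ref{lem:bound-tensor-rank} then gives that the tensor rank of $\psi$ in $\bigotimes_i K_{C_i}$ is at most $\prod_{i=1}^{D_v} n_{h_i}$. This product is exactly $n_v=(\prod_{u\in\mathrm{ch}(v)}r_u)\,r_v$ for non-root $v$, and $\prod_{u\in\mathrm{ch}(\rho)}r_u=n_\rho$ at the root.

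One detail needs care: the lemma assumes $r_i\le d_i$. If $n_{h_i}>\dim K_{C_i}$, I would replace $n_{h_i}$ by $\min(n_{h_i},\dim K_{C_i})$. The rank bound still holds, and the resulting product is still at most $n_v$. A tensor of rank at most $n_v$ lies in the image of the star-tree parametrization of Remark \ref{rmk:s_q_variety}, hence in $\V_{\mathfrak{s}_v(T)}$. This proves the first containment.

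For the ideal statement, I would apply $\mathcal I(\cdot)$ to the first containment, which reverses it:
\[
\mathcal I(\V_{\mathfrak{s}_v(T)})\subseteq \mathcal I\Big(\bigcap_i \V_{\mathfrak{s}_{h_i}(T)}\Big).
\]
The intersection is the projective zero set of the homogeneous ideal $\sum_i\mathcal I(\V_{\mathfrak{s}_{h_i}(T)})$. By the projective Nullstellensatz, its ideal equals the radical of that sum; the possible empty/irrelevant case is harmless because the intersection always contains the rank-one tensors, so it is nonempty. This yields the second statement, which should read $\mathcal I(\V_{\mathfrak{s}_v(T)})\subseteq\sqrt{\sum_i \mathcal I(\V_{\mathfrak{s}_{h_i}(T)})}$.

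The main obstacle is the bookkeeping in the first step. One has to check that the classes of $\mathfrak{s}_{h_i}(T)$ are precisely $C_i\mid\overline{C_i}$, which uses the fact that $h_i$ has degree two in $T$ and separates $C_i$ from the rest. One also has to check the dimension identity $n_v=\prod_i n_{h_i}$, including the root case where $h_\rho$ is absent. Both follow directly from the construction of $T$ in Section \ref{sec:TTN_are_spaced_trees}.
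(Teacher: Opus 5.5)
Your proposal is correct and follows the paper's proof. You view $\psi$ as an order-$D_v$ tensor over the leaf classes and apply Lemma \ref{lem:bound-tensor-rank} to the one-versus-rest flattening bounds, which gives tensor rank at most $\prod_i n_{h_i}=n_v$ and hence membership in the secant variety; you then pass to ideals via the Nullstellensatz, correctly reading the typo as $\subseteq$. Your extra bookkeeping is harmless and fills in details the paper leaves implicit: the two leaf classes of $\mathfrak{s}_{h_i}(T)$, the hypothesis $r_i\le d_i$ in the lemma (which its zero-padding proof does not actually need), and nonemptiness for the projective Nullstellensatz.
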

\begin{proof}
The leaves of the star tree $\mathfrak{s}_v(T)$ are 
labeled by classes $C_1, \ldots, C_{D_v}$. 
 Consider a tensor 
 $S \in 
 \PP(\bigotimes_{i=1}^{D_v} K_{C_i}) $ such that $S \in \V_{\mathfrak{s}_{h_i}}(T)$ for all $i=1,\ldots, D_v$.
 In other words, the flattening $S^{(i)}$ corresponding to the partition $\{i\} \mid \{1, \ldots, \hat{i}, \ldots, D_v\}$ has rank at most $r_i = r_{h_i}$ for all $i=1, \ldots, D_v$. By Lemma \ref{lem:bound-tensor-rank}, 
 the tensor rank of $S$ is bounded by $n_v := \prod_{i=1}^{D_v} r_i$ and 
 hence 
 \begin{equation*}
    S \in \V_{\mathfrak{s}_v(T)}=\sigma_{n_v}\left(\mathrm{Seg}\left(\bigtimes_{i=1}^{D_v} \PP\left(K_{C_i}\right)\right)\right). 
\end{equation*}
The second statement about the ideal inclusion follows from Hilbert's Nullstellensatz.
\end{proof}

The next lemma can be viewed as a statement about the primeness of the natural determinantal ideal which defines a subspace variety  set-theoretically (see Corollary \ref{cor: S flattening}). This result has been known in the literature (see \cite{landsberg2007, oeding2016}). We give a novel proof using the language of Markov models on spaced trees.
 \begin{lemma} \label{lem:sum-of-determinantal}
  Let $S = (S)_{j_1,\ldots, j_n}$ be a $d_1 \times \cdots \times d_n$-tensor consisting of variables and let $I_i = \langle (r_i+1)\textrm{-minors of  } S^{(i)}\rangle$ for $i=1,\ldots, n$ and $r_i \in \NN$, where $S^{(i)}$ is the flattening of $S$
  corresponding to the partition $\{i\} \mid \{1, \ldots, \hat{i}, \ldots, n\}$ and $r_i \leq d_i$ for $i=1,\ldots,n$. Then $\sum_{i=1}^n I_i$ is a prime ideal.
 \end{lemma}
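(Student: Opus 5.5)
Since the statement is about subspace varieties, the plan is to realize $\sum_{i=1}^n I_i$ as the ideal of a general Markov model on a spaced tree and then apply Theorem \ref{thm:ideal}. Take the star tree $\mathscr T$ of Proposition \ref{prop:S_is_TTN}: one root $\rho$, leaves $\lambda_1,\dots,\lambda_n$, $V_{\lambda_i}=\CC^{d_i}$, ranks $r_{\lambda_i}=r_i$. For $n\ge 2$ we have $|\mathrm{ch}(\rho)|\ge 2$. The sequence is admissible because $r_i\le d_i$. Let $T$ be the spaced tree built at the start of Section \ref{sec:TTN_are_spaced_trees}. Its internal vertices are the rank nodes $h_{\lambda_1},\dots,h_{\lambda_n}$ with $V_{h_{\lambda_i}}=\CC^{r_i}$, and the single non-rank node $\rho$ with $n_\rho=\prod_i r_i$. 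By Theorem \ref{thm:TTN=Spaced_tree}, $\V_T=\mathrm{TTN}_{\mathscr T,\mathbf V,\mathbf r}=\mathrm S_{\mathbf d,\mathbf r}$. Since $\V_T$ is the closure of the image of a polynomial map from an affine space, its ideal $\mathcal I(\V_T)$ is prime.

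Next, apply Theorem \ref{thm:ideal}:
\[
\mathcal I(\V_T)=\mathcal I(\V_{\mathfrak s_\rho(T)})+\sum_{i=1}^n \mathcal I(\V_{\mathfrak s_{h_{\lambda_i}}(T)}).
\]
The star tree $\mathfrak s_{h_{\lambda_i}}(T)$ has two leaf classes, $\{\lambda_i\}$ and the rest. By Remark \ref{rmk:s_q_variety} and the determinantal primality result \cite{bruns1988} quoted above, $\mathcal I(\V_{\mathfrak s_{h_{\lambda_i}}(T)})$ is generated by the $(r_i+1)$-minors of $S^{(i)}$. Hence it equals $I_i$.

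It remains to show that the extra summand $\mathcal I(\V_{\mathfrak s_\rho(T)})$ is contained in $\sum_i I_i$. This is the main obstacle, since Corollary \ref{cor:variety-inclusion} only places it inside $\sqrt{\sum_i I_i}$. Here the classes of $\mathfrak s_\rho(T)$ are the singletons $\{\lambda_i\}$, so $\V_{\mathfrak s_\rho(T)}=\sigma_{n_\rho}(\mathrm{Seg}(\PP^{d_1-1}\times\cdots\times\PP^{d_n-1}))$ with $n_\rho=\prod_i r_i$. The plan is to show this variety fills the ambient space, so its ideal is zero. Two cases cover everything.

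\emph{Case 1: $r_i=d_i$ for all $i$.} Then $n_\rho=\prod_i d_i$, and every tensor in $\bigotimes_i\CC^{d_i}$ has rank at most $\prod_i d_i$ (expand in the standard basis). So $\V_{\mathfrak s_\rho(T)}$ is all of $\PP(\bigotimes_i \CC^{d_i})$ and its ideal vanishes.

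\emph{Case 2: some $r_i<d_i$.} Here the secant variety need not fill the space, so instead I would change the spaced tree. Keep $T$ and the rank-node spaces $V_{h_{\lambda_i}}=\CC^{r_i}$, but compute $\mathfrak s_\rho$ in an auxiliary tree in which the rank nodes become leaves. Concretely, $\V_T$ is the image of the $\mathrm{GL}$-action, i.e. the matrices $A^{\lambda_i}$ applied to the core space $\bigotimes_i\CC^{r_i}$, and the core space is filled by rank $\prod_i r_i$ as in Case 1. Then I would appeal to the DK09 gluing principle: the ideal of a tree obtained by gluing is the sum of the ideals of the pieces together with the flattening ideals at the gluing nodes. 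Applied to gluing a full star (ideal zero, by Case 1) to the edges $\{\lambda_i,h_{\lambda_i}\}$, this gives $\mathcal I(\V_T)=\sum_i I_i$ directly.

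If that reduction proves delicate, the fallback is to cite \cite{landsberg2007} for the inclusion $\mathcal I(\V_{\mathfrak s_\rho(T)})\subseteq\sum I_i$. The cleanest route is to note that Theorem \ref{thm:ideal} applied to $T$ equals Theorem \ref{thm:ideal} applied to the tree $T'$ in which $V_\rho$ is enlarged to $\bigotimes_i\CC^{d_i}$. Enlarging $V_\rho$ does not change $\V_T$, because the rank nodes still cap the ranks. In $T'$ the star at $\rho$ is full by Case 1, so its ideal is $0$. With $\mathcal I(\V_T)=\sum_i I_i$ prime, the lemma follows.
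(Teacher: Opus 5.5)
Your final ``cleanest route'' is correct and is exactly the paper's proof: you enlarge the space at the central node to dimension $\prod_i d_i$, so that $\V_{\mathfrak{s}_\rho(T')}$ fills $\PP(\bigotimes_i \CC^{d_i})$ and contributes the zero ideal in Theorem \ref{thm:ideal}, which leaves $\mathcal{I}(\V_{T'})=\sum_i I_i$, prime by irreducibility. The paper builds that enlarged spaced tree from the start (with $n_q=m\geq\prod_i d_i$) and uses irreducibility of its Markov model directly, so your Case 1/Case 2 split, the unsupported ``gluing principle'' paragraph, the fallback citation, and the check that enlarging $V_\rho$ leaves $\V_T$ unchanged are all unnecessary.
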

 \begin{proof}
 We first construct a spaced tree. Let $T$ be the tree with an internal node $q$ connected to rank nodes $h_1, \ldots, h_n$, and let $\lambda_1, \ldots, \lambda_n$ be leaves of $T$ where $\lambda_i$ is connected to $h_i$. Attach vector spaces $\CC^{d_i}$ to the leaves $\lambda_i$ and $\CC^{r_i}$ to the rank nodes $h_i$ for $i=1,\ldots,n$. Furthermore, attach $\CC^m$ to the node $q$, that is, $n_q = m$. By Theorem \ref{thm:ideal}, the ideal of 
 the general Markov model on this spaced tree is
 $$ \mathcal{I}(\mathcal{V}_T) =  \mathcal{I}(\mathcal{V}_{\mathfrak{s}_q(T)}) + \sum_{i=1}^n \mathcal{I}(\mathcal{V}_{\mathfrak{s}_{h_i}(T)}) = 
 \mathcal{I}(\mathcal{V}_{\mathfrak{s}_q(T)}) + \sum_{i=1}^n I_i.$$
 Note that the above is true for any choice of $m$. 
 At the same time $\mathcal{V}_{\mathfrak{s}_q(T)}$
 contains all tensors in $\PP(\bigotimes_{i=1}^n \CC^{d_i})$ of tensor rank at most $m$. Therefore, if we choose $m \geq \prod_{i=1}^{n}d_i \geq \prod_{i=1}^n r_i$, this secant variety will be equal to the entire tensor space $\PP(\bigotimes_{i=1}^n \CC^{d_i})$. For this choice, 
 $\mathcal{I}(\mathcal{V}_{\mathfrak{s}_q(T)})=0$ and  $\mathcal{I}(\mathcal{V}_T) = \sum_{i=1}^n I_i$. We emphasize that $I_i$ are the same for any choice of $m$. Since $\mathcal{V}_T$ is irreducible, its ideal $\mathcal{I}(\mathcal{V}_T)$ is prime and so is $\sum_{i=1}^n I_i$. This proves the result.
 \end{proof}

Finally we are ready to prove our main theorem in this section.
\main*

\begin{proof}
Using Corollary \ref{cor:variety-inclusion}, we know that 
\begin{equation*}
     \mathcal{I}(\V_{\mathfrak{s}_v(T)}) \subseteq \sqrt{\sum_{i=1}^{D_v} \mathcal{I}(\V_{\mathfrak{s}_{h_i}(T)})}
\end{equation*}
for every non-rank non-leaf $v$ of the spaced tree $T$
corresponding to $\mathrm{TTN}_{\mathscr{T}, \mathbf{V}, \mathbf{r}}$ and its adjacent rank nodes $\{h_1, ..., h_{D_v}\}$.
Lemma \ref{lem:sum-of-determinantal} implies that
the sum of the determinantal ideals above is prime. This means that 
\begin{equation*}
     \mathcal{I}(\V_{\mathfrak{s}_v(T)}) \subseteq \sum_{i=1}^{D_v} \mathcal{I}(\V_{\mathfrak{s}_{h_i}(T)})
\end{equation*}
for every non-rank non-leaf $v$. Now Theorem \ref{thm:ideal} gives the result. 
\end{proof}

We would like to record two immediate corollaries of this theorem in the cases of tensor train and subspace varieties via Corollary \ref{cor: TT flattening} and Corollary \ref{cor: S flattening}, respectively.
\begin{corollary} \label{cor: main-TT}
For fixed $\mathbf{d}=(d_1, \ldots, d_N)$ and $\mathbf{r}=(r_0=1, r_1, \ldots, r_{N-1}, r_N=1)$, let $\psi$ be a $d_1 \times \cdots \times d_N$ tensor of indeterminates and let $\psi^{(i)}$ be the flattening corresponding to the partition $\{1,\ldots, i\} \mid \{i+1, \ldots, N\}$. Then the homogeneous prime ideal of $\mathrm{TT}_{\mathbf{d}, \mathbf{r}}$ in $\mathbb{C}[\psi]$ is
\[
\mathcal I(\mathrm{TT}_{\mathbf{d}, \mathbf{r}})
= \sum_{i=1}^{N-1} \Big\langle (r_i + 1)\textrm{-minors of }\psi^{(i)} \Big\rangle.
\]    
\end{corollary}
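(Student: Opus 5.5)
The plan is to specialize Theorem \ref{thm: main} to the tree $\mathscr{T}$ of Proposition \ref{prop:TT_is_TTN} and then rewrite each resulting determinantal ideal in terms of the tensor-train flattenings $\psi^{(i)}$. First, Proposition \ref{prop:TT_is_TTN} identifies $\mathrm{TT}_{\mathbf d,\mathbf r}$ with $\mathrm{TTN}_{\mathscr T,\mathbf V,\mathbf r}$. Here the leaves are $\lambda_1,\dots,\lambda_N$, the internal nodes $v_1=\rho,v_2,\dots,v_N$ form a path, the ranks are $r_{\lambda_i}=d_i$, and $r_{v_i}=r_{i-1}$ for $i\ge 2$. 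One should note that this rank sequence need not be admissible in the sense of Section \ref{sec:TTN}. This is harmless: Theorem \ref{thm: main} is stated for an arbitrary rank sequence, and the constructions in Section \ref{sec:TTN_are_spaced_trees} never use admissibility. Applying Theorem \ref{thm: main} therefore gives
\[
\mathcal I(\mathrm{TT}_{\mathbf d,\mathbf r})=\sum_{i=1}^N\big\langle (d_i+1)\text{-minors of }\psi^{(\lambda_i)}\big\rangle+\sum_{i=2}^N\big\langle (r_{i-1}+1)\text{-minors of }\psi^{(v_i)}\big\rangle .
\]

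Next I would show that the leaf summands vanish. The flattening $\psi^{(\lambda_i)}$ has exactly $d_i=\dim V_{\lambda_i}$ rows. Hence it has no minors of size $d_i+1$, and the ideal they generate is the zero ideal.

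Finally I would treat the remaining summands. As in the proof of Corollary \ref{cor: TT flattening}, $\mathscr L(v_i)=\{\lambda_i,\dots,\lambda_N\}$, so $\psi^{(v_i)}$ is the transpose of $\psi^{(i-1)}$. A matrix and its transpose have the same set of $k$-minors up to sign, so the two matrices generate the same minor ideals. Reindexing $i-1\mapsto i$ turns the second sum into $\sum_{i=1}^{N-1}\langle (r_i+1)\text{-minors of }\psi^{(i)}\rangle$, which is the claimed ideal.

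None of these steps is a real obstacle. The only point needing care is the bookkeeping just described: the leaf ranks $r_{\lambda_i}=d_i$ contribute nothing, and the non-admissibility of the rank sequence does not affect the hypotheses of Theorem \ref{thm: main}.
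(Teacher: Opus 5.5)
Your proposal is correct and follows the paper's own route. The paper records this corollary as an immediate consequence of Theorem \ref{thm: main}, applied to the tree of Proposition \ref{prop:TT_is_TTN} with flattenings translated as in Corollary \ref{cor: TT flattening}. Your explicit checks that the leaf ideals vanish (since $r_{\lambda_i}=d_i$) and that admissibility is never used are bookkeeping the paper leaves implicit.
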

\begin{corollary} \label{cor: main-subspace}
For fixed $\mathbf{d}=(d_1, \ldots, d_N)$ and $\mathbf{r} = (r_1, \ldots, r_N)$, let $\psi$ be a $d_1 \times \cdots \times d_N$ tensor of indeterminates and let $\psi^{(i)}$ be the flattening corresponding to the partition $\{i\} \mid \{1, \ldots, \widehat{i}, \ldots, N\}$.
 Then the homogeneous prime ideal of $\mathrm{S}_{\mathbf{d}, \mathbf{r}}$ in $\mathbb{C}[\psi]$ is
\[
\mathcal I(\mathrm{S}_{\mathbf{d}, \mathbf{r}})
= \sum_{i=1}^{N} \Big\langle (r_i + 1)\textrm{-minors of }\psi^{(i)} \Big\rangle.
\]    
\end{corollary}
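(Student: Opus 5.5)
The plan is to deduce Corollary \ref{cor: main-subspace} directly from Theorem \ref{thm: main}, using Proposition \ref{prop:S_is_TTN} and Corollary \ref{cor: S flattening}. By Proposition \ref{prop:S_is_TTN}, $\mathrm{S}_{\mathbf d,\mathbf r}=\mathrm{TTN}_{\mathscr T,\mathbf V,\mathbf r}$ for the star tree $\mathscr T$. This tree has the single internal node $\rho$, leaves $\lambda_1,\ldots,\lambda_N$, spaces $V_{\lambda_i}=\CC^{d_i}$ and ranks $r_{\lambda_i}=r_i$. Equal varieties have equal homogeneous prime ideals, so $\mathcal I(\mathrm S_{\mathbf d,\mathbf r})=\mathcal I(\mathrm{TTN}_{\mathscr T,\mathbf V,\mathbf r})$.

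Next I apply Theorem \ref{thm: main} to this tree. The non-root vertices are exactly the leaves $\lambda_1,\ldots,\lambda_N$, so the theorem gives
\[
\mathcal I(\mathrm{TTN}_{\mathscr T,\mathbf V,\mathbf r})=\sum_{i=1}^N \Big\langle (r_{\lambda_i}+1)\textrm{-minors of }\psi^{(\lambda_i)}\Big\rangle .
\]
For the leaf $\lambda_i$ we have $\mathscr L(\lambda_i)=\{\lambda_i\}$. Hence by \eqref{eq:flattening} the flattening $\psi^{(\lambda_i)}$ is the $d_i\times\prod_{j\neq i}d_j$ matrix of the partition $\{i\}\mid\{1,\ldots,\widehat i,\ldots,N\}$, which is the $\psi^{(i)}$ of the statement. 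Substituting $r_{\lambda_i}=r_i$ then gives the claimed formula.

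The only point that needs care is a hypothesis check: whether Theorem \ref{thm: main} requires an admissible rank sequence. For this tree admissibility reads $r_i\le d_i$. If some $r_i>d_i$, the condition $\rank(\psi^{(i)})\le r_i$ is vacuous, and the $(r_i+1)$-minors of the $d_i\times\prod_{j\ne i}d_j$ matrix form the empty set. So I can replace $r_i$ by $\min(r_i,d_i)$ without changing either side of the formula. I would also note that $|\mathrm{ch}(\rho)|=N\ge 2$, as the setup requires. I expect no real obstacle beyond these bookkeeping checks. As an independent confirmation for admissible ranks, the same conclusion follows from Lemma \ref{lem:sum-of-determinantal} together with Corollary \ref{cor: S flattening} and the Nullstellensatz: the sum of the $I_i$ is prime and cuts out $\mathrm S_{\mathbf d,\mathbf r}$ set-theoretically, so it equals $\mathcal I(\mathrm S_{\mathbf d,\mathbf r})$.
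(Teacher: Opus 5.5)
Your proposal is correct and follows the paper's own route: the paper records this as an immediate corollary of Theorem~\ref{thm: main}, applied to the star tree of Proposition~\ref{prop:S_is_TTN}, with the leaf flattenings $\psi^{(\lambda_i)}$ identified with $\psi^{(i)}$ via Corollary~\ref{cor: S flattening}. Your reduction of a non-admissible $r_i$ to $\min(r_i,d_i)$ and your alternative check via Lemma~\ref{lem:sum-of-determinantal} plus the Nullstellensatz are sound extras; the paper itself notes that this lemma is the primeness statement for subspace varieties.
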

\begin{remark}
Corollary \ref{cor: main-TT} settles the first part of Conjecture 5.10 in \cite{BFHP25}, which was based on a conjecture in unpublished notes of Bernd Sturmfels. Corollary \ref{cor: main-subspace} has been established via a different derivation in \cite{landsberg2007, oeding2016}. 
\end{remark}

\section{Gr\"obner bases of order $3$ tensor train varieties}\label{sec: GB}
It is natural to ask whether the minors generating the ideal
$\mathcal{I}(\mathrm{TTN}_{\mathscr{T}, \mathbf{V}, \mathbf{r}})$ as in Theorem \ref{thm: main} form a Gr\"obner basis of this ideal with respect to some term order. It is well known that for an $p\times q$ matrix $X$ and for $t \leq \min(p,q)$ the set of $t$-minors of $X$ is a Gr\"obner basis with respect to any diagonal term order; see \cite{N86,CGG90,M94,St90} and \cite[Chapter 4]{BCRV22}.
Since $\mathcal{I}(\mathrm{TTN}_{\mathscr{T}, \mathbf{V}, \mathbf{r}})$ is 
a sum of precisely such determinantal ideals one hopes for a similar result. 

In this last section, we focus on tensor train varieties and report what is already known in this case. As in Section~\ref{sec:TTN} we fix  positive integer vectors $\mathbf{d} = (d_1,\dots,d_N)$ and $\mathbf{r} = (r_0=1, r_1,\dots,r_{N-1}, r_N=1)$ and let $\mathrm{TT}_{\mathbf{d},\mathbf{r}}$ be the tensor train variety defined by this data. Corollary \ref{cor: main-TT} gives the equations defining $\mathcal{I}(\mathrm{TT}_{\mathbf{d}, \mathbf{r}})$. They are the union of the $(r_i+1)$-minors of the flattening $\psi^{(i)}$ for $i=1, \ldots, N-1$. Recall that $\psi^{(i)}$ is the $(d_1\cdots d_i) \times (d_{i+1} \cdots d_N) $ matrix where $(\psi^{(i)})_{(j_1 \cdots j_i),(j_{i+1} \cdots j_N)}$, the entry in the row indexed by $(j_1, \ldots, j_i) \in [d_1] \times \cdots \times [d_i]$ and in the column indexed by 
$(j_{i+1}, \ldots, j_N) \in [d_{i+1}] \times \cdots \times [d_N]$, is equal to $\psi_{j_1 \cdots j_N}$. First, we state a conjecture that is a refinement of the second part of Conjecture 5.10 in \cite{BFHP25} which states that the generating minors of $\mathcal{I}(\mathrm{TT}_{\mathbf{d}, \mathbf{r}})$ form a Gr\"obner basis. 
\begin{conjecture} \label{conj: minors-form-GB}
With respect to a diagonal term order, the polynomials
$$ \bigcup_{i=1}^{N-1} \{ (r_i +1)\text{-minors of } \psi^{(i)}\}$$
form a Gr\"obner basis of $\mathcal{I}(\mathrm{TT}_{\mathbf{k}, \mathbf{r}})$.
\end{conjecture}
 
We now describe the term order in the conjecture. For this we organize each flattening $\psi^{(i)}$ as follows.  For each choice of the indices $(j_2,\dots,j_{N-1}) \in [d_2] \times \cdots \times [d_{N-1}]$, we define 
$A_{j_2 \cdots j_{N-1}}$ to be the $d_1 \times d_N$ matrix of indeterminates 
\[  
  \bigl(A_{j_2\cdots j_{N-1}}\bigr)_{j_1 j_N} \;=\; \psi_{j_1 j_2 \cdots j_{N-1} j_N}.
\]
Inspired by \cite{illian2025grobner}, we will call $A_{j_2\cdots j_{N-1}}$ a \emph{page}. 
Then, $\psi^{(i)}$ is exactly the block matrix obtained by arranging these pages in a grid whose block-rows are indexed by $(j_2,\dots,j_{i})$ and whose block-columns are indexed by
$(j_{i+1},\dots,j_{N-1})$, both in lexicographic order.

Note that $\psi^{(1)}$ has a single block-row and
its block-columns are indexed by $(j_2,\cdots,j_{N-1})$, and $\psi^{(N-1)}$ has  a single block-column
and its block-rows are also indexed by $(j_2, \cdots, j_{N-1})$. 
\begin{example}\label{ex:concrete-flattening}
Suppose $(d_1,d_2,d_3,d_4)=(3,2,3,2)$. The six pages $A_{pq}$, $p\in\{1,2\}$, $q\in\{1,2,3\}$, are
$$A_{pq} = \begin{pmatrix} \psi_{1pq1} & \psi_{1pq2} \\ \psi_{2pq1} & \psi_{2pq2} \\ \psi_{3pq1} & \psi_{3pq2}\end{pmatrix},$$
and the flattenings of $\psi$ are as follows. First, $\psi^{(1)}$ is the single block-row
$$
\psi^{(1)} = \left(\begin{array}{cc|cc|cc|cc|cc|cc}
\psi_{1111} & \psi_{1112} & \psi_{1121} & \psi_{1122} & \psi_{1131} & \psi_{1132} & \psi_{1211} & \psi_{1212} & \psi_{1221} & \psi_{1222} & \psi_{1231} & \psi_{1232} \\
\psi_{2111} & \psi_{2112} & \psi_{2121} & \psi_{2122} & \psi_{2131} & \psi_{2132} & \psi_{2211} & \psi_{2212} & \psi_{2221} & \psi_{2222} & \psi_{2231} & \psi_{2232} \\
\psi_{3111} & \psi_{3112} & \psi_{3121} & \psi_{3122} & \psi_{3131} & \psi_{3132} & \psi_{3211} & \psi_{3212} & \psi_{3221} & \psi_{3222} & \psi_{3231} & \psi_{3232}
\end{array}\right).
$$
Next, $\psi^{(2)}$ is the $2\times3$ grid of pages
$$
\psi^{(2)} = \left(\begin{array}{cc|cc|cc}
\psi_{1111} & \psi_{1112} & \psi_{1121} & \psi_{1122} & \psi_{1131} & \psi_{1132} \\
\psi_{2111} & \psi_{2112} & \psi_{2121} & \psi_{2122} & \psi_{2131} & \psi_{2132} \\
\psi_{3111} & \psi_{3112} & \psi_{3121} & \psi_{3122} & \psi_{3131} & \psi_{3132} \\ \hline
\psi_{1211} & \psi_{1212} & \psi_{1221} & \psi_{1222} & \psi_{1231} & \psi_{1232} \\
\psi_{2211} & \psi_{2212} & \psi_{2221} & \psi_{2222} & \psi_{2231} & \psi_{2232} \\
\psi_{3211} & \psi_{3212} & \psi_{3221} & \psi_{3222} & \psi_{3231} & \psi_{3232}
\end{array}\right).
$$
Finally, $\psi^{(3)}$ is the single block-column obtained by stacking the six pages $A_{11},A_{12},A_{13},A_{21},A_{22},A_{23}$ vertically, in that order. 
\end{example}

\begin{definition}[A diagonal order]\label{defn: diagonal order}
Order the entries of the last flattening $\psi^{(N-1)}$ by reading it row by row, left to right in each row: that is, $\psi_{j_1\cdots j_N} \succ \psi_{j_1'\cdots j_N'}$ if $(j_2,\ldots,j_{N-1}) <_{\mathrm{lex}} (j_2',\ldots,j_{N-1}')$, or if these tuples are equal and $j_1<j_1'$, or if these tuples are equal and $j_1=j_1'$ and $j_N<j_N'$. 
\end{definition}
In other words, $\psi_{j_1\cdots j_N} \succ \psi_{j_1' \cdots j_N'}$ if the first variable is in an earlier page, or if both variables are in the same page and the first variable is in an earlier row in that page, or if both variables are in the same row of the same page, the first variable is in an earlier column. Since every variable $\psi_{j_1\cdots j_N}$ occurs exactly once among the entries of $\psi^{(N-1)}$, this defines a single total order $\succ$ on all the entries of $\psi$,
hence induces a lexicographic term order on the polynomial ring $\CC[\psi]$. 
\begin{proposition}\label{prop:global-order}
The monomial order $\succ$ of Definition~\ref{defn: diagonal order} is a diagonal order on 
each $\psi^{(i)}$, $i=1,\ldots, N-1$. 
 In other words, 
 for any minor in any of the flattening matrices, the leading monomial with respect to $\succ$ is equal to the product of the diagonal entries of the submatrix whose determinant is the minor.
\end{proposition}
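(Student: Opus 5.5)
The plan is to use the standard sufficient criterion for a lexicographic order to be diagonal on a matrix. Suppose that for every $2\times 2$ submatrix with rows $a<b$ and columns $c<d$ of $\psi^{(i)}$, the entry at $(a,c)$ is the $\succ$-largest of the four entries. Then for any $t\times t$ submatrix, the largest variable in the whole submatrix sits in its top-left corner. The lex leading term of the determinant must contain this largest variable, and among the terms that do, the problem reduces to the complementary minor. Induction on $t$ then shows that the leading monomial is the diagonal product. Strictly, a weaker property is what is needed: for every submatrix, the largest variable lies at position $(1,1)$ of that submatrix. This holds as soon as, for rows $a<b$ and columns $c<d$, we have $\psi_{a,c}\succ\psi_{b,d}$, $\psi_{a,c}\succ \psi_{a,d}$ and $\psi_{a,c}\succ\psi_{b,c}$. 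Equivalently, $\succ$ must be strictly decreasing along each row and each column of $\psi^{(i)}$ when rows and columns are listed in lexicographic order. All entries are distinct variables, so there are no ties.

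So the key step is to check monotonicity along rows and along columns of each $\psi^{(i)}$ under the fixed lexicographic ordering of row indices $(j_1,\ldots,j_i)$ and column indices $(j_{i+1},\ldots,j_N)$. For this I will use the block description given before the definition. Row indices of $\psi^{(i)}$ are ordered so that block-rows (indexed by $(j_2,\ldots,j_i)$) come first and $j_1$ varies inside a block. This requires reordering the lex order of rows as $(j_2,\ldots,j_i,j_1)$, which is harmless because permuting rows and columns consistently does not change the set of minors. It only changes which positions count as the diagonal, and the claim must be read with respect to this page-grid arrangement. Similarly, columns are ordered by $(j_{i+1},\ldots,j_{N-1},j_N)$.

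Now take two entries in the same column with row indices $R<R'$ in this order. Either they lie in different block-rows, in which case $(j_2,\ldots,j_i)<_{\mathrm{lex}}(j_2',\ldots,j_i')$ with the same suffix $(j_{i+1},\ldots,j_{N-1})$. Then $(j_2,\ldots,j_{N-1})<_{\mathrm{lex}}(j_2',\ldots,j'_{N-1})$, so the first variable lies on an earlier page and is larger. Or they lie in the same page with $j_1<j_1'$ and equal $j_N$, so the first is larger by the second clause of Definition~\ref{defn: diagonal order}.

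Two entries in the same row with columns $C<C'$ behave the same way. If they lie in different block-columns, the page tuple has equal prefix and a lex-smaller suffix $(j_{i+1},\ldots,j_{N-1})$, so the page is earlier. If they lie in the same block-column, they share the page and $j_1$, and $j_N<j_N'$, which is the third clause.

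Finally, I will assemble these facts into the induction on minor size described in the first paragraph. The main obstacle is the bookkeeping that the block arrangement of rows and columns agrees with the concatenated lexicographic page order. In particular, the comparison across different block-rows must never depend on $j_1$ or $j_N$, and the page tuple must be the concatenation of the row-block index and the column-block index. Once this is checked, the statement follows.
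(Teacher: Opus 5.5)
Your proof is correct. It uses the same set-up as the paper: rows of $\psi^{(i)}$ are ordered by $(j_2,\ldots,j_i)$ and then $j_1$, and columns by $(j_{i+1},\ldots,j_{N-1})$ and then $j_N$. The key step, however, differs.

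The paper asserts that the entries of a submatrix $M$ are totally ordered in row-major reading order, $\psi_{r_1c_1}\succ\cdots\succ\psi_{r_1c_t}\succ\psi_{r_2c_1}\succ\cdots$, and concludes from this directly. That chain is stronger than what is true. Take $N=3$, $i=1$, and the submatrix with rows $j_1=1,2$ and columns $(j_2,j_3)=(1,2),(2,1)$. The chain would require $\psi_{121}\succ\psi_{212}$. But $\psi_{212}$ lies on the earlier page $A_1$, so $\psi_{212}\succ\psi_{121}$. In general the chain fails whenever two rows of $M$ share a block-row while the columns of $M$ span several block-columns.

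Your criterion avoids this. You require strict decrease along every row and every column, so the top-left entry of every submatrix is its largest variable; you then expand along that entry and induct on the complementary minor. This uses only comparisons that actually hold. Your two-case check verifies exactly these comparisons: different pages are decided by the concatenated page index, and the same page is decided by the second or third clause of the definition of $\succ$.

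So your route gives a valid proof. It is the monotonicity property, not the total row-major order, that the paper's argument actually needs.
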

\begin{proof}
The claim is  clear for $i=N-1$. Otherwise, identify each row of $\psi^{(i)}$ with a tuple $(j_1,\ldots,j_i)$ and each column with the tuple $(j_{i+1},\ldots,j_N)$. Now let $f$ be a $t$-minor of $\psi^{(i)}$ that is the determinant of the submatrix $M$ with rows $r_1 > \cdots >r_t$ and columns $c_1> \cdots >c_t$. 
Observe that  $r_k = (j_1, \ldots, j_i) > r_\ell = (j_1', \ldots, j_i')$ if $(j_2, \ldots, j_i) <_{\mathrm{lex}} (j_2', \ldots, j_i')$, 
or if these tuples are equal and $j_1 < j_1'$. Similarly, $c_k = (j_{i+1}, \ldots, j_N) > c_\ell = (j_{i+1}', \ldots, j_N')$ if $(j_{i+1}, \ldots, j_{N-1}) <_{\mathrm{lex}} (j_{i+1}', \ldots, j_{N-1}')$, 
or if these tuples are equal and $j_N < j_N'$.
Now we see that 
$$\psi_{r_1c_1} \succ \psi_{r_1c_2} \succ \cdots \succ \psi_{r_1c_t} \succ \psi_{r_2c_1} \succ \cdots \psi_{r_2c_t} \succ \cdots \succ \psi_{r_tc_1} \succ \cdots \succ \psi_{r_tc_t}.$$
In other words, the entries of $M$ are ordered lexicographically row by row, left to right in each row. Such an ordering picks the product of the diagonal terms of $M$ as the initial term of $f$. 
\end{proof}

While Conjecture \ref{conj: minors-form-GB} is open in general, it is true for tensor trains where $N=3$, and 
$\mathbf{d}=(d_1, d_2, d_3)$ and $\mathbf{r} = (r_1, r_2)$ are arbitrary. We refer to this case as an order $3$ tensor train. In this case, the ideal $\mathcal{I}(\mathrm{TT}_{\mathbf{d}, \mathbf{r}})$ is a \emph{double determinantal ideal}.
\begin{theorem} \label{thm: order-3-GB}
\cite[Theorem 4.1]{FK20}, \cite[Theorem 1.2]{illian2025grobner} For an order $3$ tensor train with $\mathbf{d} = (d_1,d_2,d_3)$ and $\mathbf{r} = (r_1, r_2)$ the polynomials
\begin{equation*}
    \{(r_1+1)\text{-minors of } \psi^{(1)} \} \,\, \cup \, \, 
\{(r_2+1)\text{-minors of } \psi^{(2)} \},
\end{equation*}
where $\psi^{(1)}$ is the $d_1 \times (d_2d_3)$ flattening of $\psi$ and $\psi^{(2)}$ is the $(d_1d_2) \times d_3$ flattening of $\psi$, form a Gr\"obner basis with respect to the diagonal order in Definition \ref{defn: diagonal order}. 
\end{theorem}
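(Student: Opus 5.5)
The plan is to recognize $\mathcal{I}(\mathrm{TT}_{\mathbf{d},\mathbf{r}})$ for $N=3$ as a \emph{double determinantal ideal}, and then to check that the term order of Definition \ref{defn: diagonal order} is one of the diagonal orders for which the cited Gr\"obner basis results apply. With $N=3$ there are exactly $d_2$ pages $A_{1},\ldots,A_{d_2}$, each a generic $d_1\times d_3$ matrix with $(A_{j_2})_{j_1j_3}=\psi_{j_1j_2j_3}$. By the description preceding Example \ref{ex:concrete-flattening}, $\psi^{(1)}=(A_1\mid A_2\mid\cdots\mid A_{d_2})$ is the horizontal concatenation of the pages. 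Likewise $\psi^{(2)}$ is the vertical stack of $A_1,\ldots,A_{d_2}$. The ideal generated by the $(r_1+1)$-minors of the horizontal concatenation together with the $(r_2+1)$-minors of the vertical concatenation of $d_2$ generic $d_1\times d_3$ matrices is, by definition, the double determinantal ideal studied in \cite{FK20} and \cite{illian2025grobner}. By Corollary \ref{cor: main-TT}, it equals $\mathcal{I}(\mathrm{TT}_{\mathbf{d},\mathbf{r}})$.

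The second step is to match term orders. By Proposition \ref{prop:global-order}, $\succ$ is simultaneously diagonal on $\psi^{(1)}$ and on $\psi^{(2)}$. Concretely, $\succ$ is the lexicographic order that reads page $A_1$ row by row, then page $A_2$, and so on. I would verify directly that this is precisely, or refines to, the order in \cite[Theorem 4.1]{FK20}, which is a lex order with pages ordered and each page read row by row. Since the Gr\"obner basis statement there is formulated for any order that is diagonal on both concatenations, this also covers the order in \cite[Theorem 1.2]{illian2025grobner}.

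If I had to reprove the Gr\"obner basis property rather than quote it, the main obstacle would be showing that the initial ideal of $\mathcal{I}(\mathrm{TT}_{\mathbf{d},\mathbf{r}})$ is no larger than the ideal $J$ generated by the diagonal leading terms of the minors. Inclusion $J\subseteq \mathrm{in}_\succ(\mathcal{I})$ is immediate from Proposition \ref{prop:global-order}. My first attempt at the reverse inclusion would be a geometric vertex decomposition, as in \cite{illian2025grobner}, that degenerates with respect to the $\succ$-smallest variable $\psi_{d_1d_2d_3}$ (the last entry of the last page). I would then show by induction on $(d_1,d_2,d_3,r_1,r_2)$ that the ideal splits into a link ideal and a deletion ideal, each again of double determinantal type with smaller parameters, and that the minors remain a Gr\"obner basis at each stage. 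The base cases are ordinary determinantal ideals, where the minors form a Gr\"obner basis for diagonal orders \cite{St90}.

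An alternative route is to compare Hilbert functions. The Stanley--Reisner complex of $J$ is described by nonintersecting lattice paths on the page grid, and its dimension and degree can be matched with those of $\mathrm{TT}_{\mathbf{d},\mathbf{r}}$ computed from the parametrization $\Psi_{\mathbf d,\mathbf r}$. Equality of Hilbert functions together with $J\subseteq\mathrm{in}_\succ(\mathcal{I})$ then forces $J=\mathrm{in}_\succ(\mathcal{I})$. The delicate point in either route is controlling the interaction between minors straddling several pages in $\psi^{(1)}$ and those in $\psi^{(2)}$; that interaction is exactly what the induction, or the lattice-path count, must handle.
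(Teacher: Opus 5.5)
Your proposal matches the paper, which gives no independent proof of this statement. The paper identifies the order-$3$ tensor train ideal with the double determinantal ideal of the $d_2$ pages (their horizontal concatenation $\psi^{(1)}$ and vertical concatenation $\psi^{(2)}$), relies on Proposition~\ref{prop:global-order} to see that $\succ$ is diagonal on both flattenings, and then cites \cite[Theorem 4.1]{FK20} and \cite[Theorem 1.2]{illian2025grobner}. Your fallback reproof sketches are not needed; note only that the Hilbert-function route would be circular here, since this paper obtains the degree of $\mathrm{TT}_{\mathbf{d},\mathbf{r}}$ from this very Gr\"obner basis rather than knowing it independently.
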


The proofs of Theorem 4.1 in \cite{FK20} and Theorem 1.2 in \cite{illian2025grobner} are quite different. We do not know whether either can be generalized to prove Conjecture \ref{conj: minors-form-GB}. However, we like to remark that the techniques in \cite{illian2025grobner} do not extend in a straightforward way: we know that at least one crucial intermediate result (\cite[Lemma 2.12]{illian2025grobner}) is not true in its obvious generalization. 

\subsection{Degree of an order $3$ tensor train variety}
The degree of an embedded projective variety is one of its most important invariants. However, the degree of tree tensor network varieties is, in general, not known at this moment. Even for the special cases of tensor train and subspace varieties there is no general explicit formula that expresses the degree in terms of the data, for instance,  in terms of $\mathbf{d}$ and $\mathbf{r}$ in the case of tensor train varieties. At the same time, there is a nearly explicit formula for subspace varieties (Theorem 1.1 in \cite{Breiding_2024}) where the formula depends on computing the coefficients of a particular multivariate polynomial. This is derived by computing the volume of the smooth stratum of the subspace variety. In a similar spirit, a nearly explicit formula for tensor train varieties is presented in Theorem 1 of \cite{RS26}. 

Here we present a purely combinatorial procedure to compute the degree of an order $3$ tensor train variety. Our result depends on an intermediate result that appears in the proof of \cite[Theorem 4.1]{FK20}; see also Remark 4.3 in the same paper. 
\begin{proposition} For an order $3$ tensor train with $\mathbf{d} = (d_1,d_2,d_3)$ and $\mathbf{r} = (r_1, r_2)$ let $J_\succ$ be the initial ideal 
of $\mathcal{I}(\mathrm{TT}_{\mathbf{d}, \mathbf{r}})$ with respect to a diagonal term order. Then $J_\succ$ is Cohen-Macaulay. This means that $\mathcal{I}(\mathrm{TT}_{\mathbf{d}, \mathbf{r}})$
is also Cohen-Macaulay.
\end{proposition}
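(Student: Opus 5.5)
The plan is to combine Theorem \ref{thm: order-3-GB} with the structure of the initial ideal $J_\succ$ as a squarefree monomial ideal, and then transfer Cohen--Macaulayness from $J_\succ$ to $\mathcal{I}(\mathrm{TT}_{\mathbf{d}, \mathbf{r}})$ by the standard semicontinuity argument.

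First, by Theorem \ref{thm: order-3-GB} and Proposition \ref{prop:global-order}, $J_\succ$ is generated by the diagonal products of the $(r_1+1)$-minors of $\psi^{(1)}$ and of the $(r_2+1)$-minors of $\psi^{(2)}$. Each such diagonal product is a product of distinct variables, so $J_\succ$ is squarefree and hence the Stanley--Reisner ideal of a simplicial complex $\Delta$ on the vertex set $\{\psi_{j_1j_2j_3}\}$. The faces of $\Delta$ are the sets of entries containing no "$\succ$-diagonal" chain of length $r_1+1$ in $\psi^{(1)}$ and none of length $r_2+1$ in $\psi^{(2)}$. Since both flattenings are block matrices built from the same pages $A_{j_2}$, these are sets avoiding two families of chains in a common poset on the entries. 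By Reisner's criterion, or more conveniently by shellability, it suffices to show that $\Delta$ is shellable.

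Following the intermediate result in the proof of \cite[Theorem 4.1]{FK20} (cf.\ their Remark 4.3), I would describe the facets of $\Delta$ as unions of non-intersecting lattice paths, in analogy with the classical one-matrix case. For a single matrix, the facets of the diagonal initial complex of $t$-minors are unions of $t-1$ nonintersecting paths, and these are shellable by ordering the path families suitably. Here I expect the facets to be families of $r_1$ paths through the grid of $\psi^{(1)}$ which, read in $\psi^{(2)}$, also decompose into $r_2$ paths. I would then show that $\Delta$ is pure of dimension $\dim \mathrm{TT}_{\mathbf{d},\mathbf{r}}$ and exhibit a shelling by a lexicographic order on path families, checking the usual condition that each new facet meets the union of the previous ones in a pure codimension-one subcomplex generated by "corner swaps" of a single path.

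Given that $J_\succ$ is Cohen--Macaulay, the last sentence follows because $\mathcal{I}$ degenerates flatly to $\mathrm{in}_\succ(\mathcal{I})=J_\succ$ via a Gröbner deformation, and depth can only drop under specialization. Thus $\mathrm{depth}\, \CC[\psi]/\mathcal{I}\ge \mathrm{depth}\, \CC[\psi]/J_\succ$, while the Krull dimensions agree, so $\CC[\psi]/\mathcal{I}$ is Cohen--Macaulay (\cite[Chapter 3]{BCRV22}). The main obstacle is the shelling of $\Delta$, since the interaction of the two families of diagonal chains across pages must be controlled. If a direct shelling is too messy, my fallback is to take the vertex decomposability of $\Delta$ from the liaison/vertex-decomposition argument of \cite{FK20} as the intermediate result the paper cites, and deduce Cohen--Macaulayness from it.
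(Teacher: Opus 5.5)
The easy steps in your proposal are correct. By Theorem~\ref{thm: order-3-GB} and Proposition~\ref{prop:global-order}, $J_\succ = I^{(1)}_\succ + I^{(2)}_\succ$ is squarefree. Its Stanley--Reisner complex is $\Delta=\Delta_1\cap\Delta_2$, where $\Delta_i$ is the classical initial complex of the $(r_i+1)$-minors of $\psi^{(i)}$. Your last step is also the standard correct transfer: the Gr\"obner degeneration gives $\operatorname{depth}\CC[\psi]/\mathcal{I} \geq \operatorname{depth}\CC[\psi]/J_\succ$ with equal Krull dimensions.

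\textbf{Gap in the main route.} The direct shelling is where all the content lies, and it rests on a wrong picture of the facets. Every facet of $\Delta_1\cap\Delta_2$ is a maximal set of the form $F\cap G$, with $F$ a union of $r_1$ non-intersecting paths in $\psi^{(1)}$ and $G$ a union of $r_2$ non-intersecting paths in $\psi^{(2)}$. The sizes do not match:
\begin{itemize}
\item A full path family $F$ has $r_1(d_1+d_2d_3-r_1)$ entries.
\item The top-dimensional facets of $\Delta$ have as many vertices as the Krull dimension $r_1d_1+r_1r_2d_2+r_2d_3-r_1^2-r_2^2$.
\item The excess of $|F|$ over this is $(d_3-r_2)(r_1d_2-r_2)$; symmetrically, $|G|$ exceeds it by $(d_1-r_1)(r_2d_2-r_1)$.
\item For $\mathbf d=(3,3,3)$, $\mathbf r=(2,2)$ this is $20$ versus $16$ on both sides.
\end{itemize}
So, outside degenerate cases where a rank condition is redundant, facets are proper subsets of path families in \emph{both} flattenings. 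They are not path families that ``also decompose'' in the other flattening. The single-path corner-swap shelling from the one-matrix case therefore has nothing to act on.

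Purity of $\Delta$ is not evident either. The paper argues the other way: in Proposition~\ref{prop:degree-of-order-3} it deduces that the facets are exactly the full-dimensional $F\cap G$ \emph{from} the Cohen--Macaulay property.

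\textbf{What the paper does.} The paper gives no independent combinatorial argument. It imports the Cohen--Macaulayness of the diagonal initial ideal from the proof of \cite[Theorem 4.1]{FK20} (see their Remark 4.3). It then treats the passage to $\mathcal{I}(\mathrm{TT}_{\mathbf d,\mathbf r})$ as the standard degeneration fact you quote. Your fallback is exactly this, so your proposal is complete only through it.

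The shelling route as sketched should be dropped. To make it work you would need to describe the facets correctly as maximal intersections $F\cap G$, and supply a genuinely new combinatorial argument controlling how the two path systems interact.
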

Our combinatorial procedure depends on the following result. 
\begin{proposition} \label{prop:degree-of-order-3} For an order $3$ tensor train with $\mathbf{d} = (d_1,d_2,d_3)$ and $\mathbf{r} = (r_1, r_2)$ let $J_\succ$ be the initial ideal 
of $\mathcal{I}(\mathrm{TT}_{\mathbf{d}, \mathbf{r}})$ with respect to a diagonal term order. Let $I^{(1)}_\succ$ and $I^{(2)}_\succ$ be the initial ideals 
of 
$$I_1 = \langle (r_1+1)\text{-minors of } \psi^{(1)}\rangle \, \, \mbox{   and   }  \,\,I_2 = \langle (r_2+1)\text{-minors of } \psi^{(2)}\rangle$$
with respect to the same term order. 
Then the minimal primes of $J_\succ$ 
form the set
$$ \{ M+N \, : \, M \in \mathrm{minAss}(I^{(1)}_\succ), \,\, 
N \in \mathrm{minAss}(I^{(2)}_\succ) \mbox{  and } \dim(M+N) = \dim(J)  \}
$$
where $\dim(J) = r_1d_1 + r_1r_2d_2 + r_2d_3 -r_1^2-r_2^2$.
Moreover, the degree of the tensor train variety $TT_{\mathbf{d}, \mathbf{r}}$ is the cardinality of this set.  
\end{proposition}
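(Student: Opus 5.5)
By Theorem \ref{thm: order-3-GB}, the union of the two sets of minors is a Gr\"obner basis of $J=\mathcal{I}(\mathrm{TT}_{\mathbf d,\mathbf r})=I_1+I_2$ with respect to the diagonal order. The plan is to exploit this in two steps. First, the initial ideal of the sum equals the sum of the initial ideals:
\[
J_\succ = I^{(1)}_\succ + I^{(2)}_\succ .
\]
Each $I^{(k)}_\succ$ is the squarefree monomial ideal generated by the main diagonals of the $(r_k+1)$-minors of $\psi^{(k)}$. This holds because a diagonal order picks out diagonal leading terms on each flattening (Proposition \ref{prop:global-order}), and the minors of a single generic matrix form a Gr\"obner basis. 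Consequently $J_\succ$ is squarefree, i.e.\ the Stanley--Reisner ideal of a simplicial complex, and hence radical.

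Next I determine the minimal primes. For squarefree monomial ideals the primes are generated by variables, and a prime $P$ contains $I^{(1)}_\succ+I^{(2)}_\succ$ if and only if it contains a minimal prime $M$ of $I^{(1)}_\succ$ and a minimal prime $N$ of $I^{(2)}_\succ$. Hence every minimal prime of $J_\succ$ has the form $M+N$, and $M+N$ is itself a monomial prime. The minimal primes of $J_\succ$ are therefore the inclusion-minimal elements among the sums $M+N$.

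The key remaining point is to show that minimality coincides with having the top dimension. Here I use the preceding proposition that $J_\succ$ is Cohen--Macaulay. A Cohen--Macaulay ideal is unmixed, so every minimal prime of $J_\succ$ has dimension $\dim(\CC[\psi]/J_\succ)$. This dimension equals $\dim(\CC[\psi]/J)$, since a Gr\"obner degeneration preserves the Hilbert function.

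For the converse, take any $M+N$ of that dimension. It contains some minimal prime $P$, and $P$ has the same dimension as $M+N$. Since both are prime and $P\subseteq M+N$ with equal dimension, $M+N=P$. This identifies the set in the statement exactly.

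It remains to compute $\dim J$ in the affine cone. I would count parameters: the cores have dimensions $d_1r_1$, $d_2r_1r_2$ and $r_2d_3$, and the gauge group $GL_{r_1}\times GL_{r_2}$ acts with generically finite stabilizer. This gives
\[
r_1d_1+r_1r_2d_2+r_2d_3-r_1^2-r_2^2 .
\]
Justifying this count requires an admissibility/genericity assumption (e.g.\ $r_1\le d_1$, $r_1\le d_2r_2$, and so on) so that generic cores have full rank; I expect to take this from the known dimension formula for TT varieties in \cite{BFHP25}.

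For the degree, the degree of $J$ equals that of $J_\succ$, which has the same Hilbert function. For a radical monomial ideal, the degree is the number of minimal primes of top dimension, each of which is a linear coordinate subspace of degree $1$. By unmixedness, all minimal primes have top dimension, so the degree is the cardinality of the set.

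The main obstacle is the Cohen--Macaulay input. Without it, some minimal $M+N$ could have lower dimension, or some top-dimensional sum might fail to be minimal. Since that input is supplied by the earlier proposition (from \cite{FK20}), the rest of the argument is formal.
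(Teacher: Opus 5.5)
Your proposal is correct and follows essentially the same route as the paper. Both arguments obtain $J_\succ = I^{(1)}_\succ + I^{(2)}_\succ$ from Theorem \ref{thm: order-3-GB}, show that the minimal primes have the form $M+N$, use the Cohen--Macaulay property for unmixedness, take the dimension from \cite{BFHP25}, and read off the degree as the number of top-dimensional coordinate primes. The only difference is minor: you identify the minimal primes by prime containment rather than through the decomposition $J_\succ=\bigcap_{M,N}(M+N)$, and you explicitly check that every top-dimensional $M+N$ is minimal, a step the paper leaves implicit.
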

\begin{proof}
Both $I^{(1)}_\succ$ 
and $I^{(2)}_\succ$ are squarefree monomial ideals (hence Stanley-Reisner ideals of certain simplicial complexes; see Definition \ref{defn:non-intersecting paths}) we have
$$ I^{(1)}_\succ = \bigcap_{M \in \mathrm{minAss}(I^{(1)}_\succ)} M  \quad \quad  \mbox{  and  }  \quad \quad 
I^{(2)}_\succ = \bigcap_{N \in \mathrm{minAss}(I^{(2)}_\succ)} N.
$$
We note that both $I^{(1)}_\succ$
 and $I^{(2)}_\succ$ are also Cohen-Macaulay (see \cite[Chapter 4]{BCRV22}), 
 so the above prime decompositions are unmixed.
By Theorem \ref{thm: order-3-GB}, $J_\succ = I^{(1)}_\succ + I^{(2)}_\succ$, and therefore
$$J_\succ = \bigcap_{\substack{ M \in \mathrm{minAss}(I^{(1)}_\succ) \\ N \in \mathrm{minAss}(I^{(2)}_\succ)}} M + N.$$
This follows by proving the two inclusions. The inclusion $\subseteq$ is true for any two arbitrary ideals which are intersections of two collections of other ideals. For the reverse inclusion 
it is enough to prove that any monomial $m$ contained on the right hand side is also in $J_\succ$. If $m \in M$ for every minimal prime $M$ of $I^{(1)}_\succ$, clearly $m \in J_\succ$. If $m \not \in M$ for some minimal prime of $I^{(1)}_\succ$, then it must be divisible by some variable in each minimal prime $N$ of $I^{(2)}_\succ$. This means $m \in I^{(2)}_\succ$ and therefore $m \in J_\succ$.
Since $J_\succ$ is Cohen-Macaulay, its unmixed prime decomposition is given by 
the intersection of those sums $M+N$ whose dimension is equal to  
$\dim(J_\succ) = \dim(\mathcal{I}(\mathrm{TT}_{\mathbf{d}, \mathbf{r}}))$.
This dimension is equal to $r_1d_1 + r_1r_2d_2 + r_2d_3 - r_1^2-r_2^2$ by Corollary 6.5 in \cite{BFHP25}. Now the degree of $J_\succ$ is equal to the number of its distinct minimal primes, and this is also equal to the degree of $\mathcal{I}(\mathrm{TT}_{\mathbf{d}, \mathbf{r}})$. 
 \end{proof}
Although Proposition \ref{prop:degree-of-order-3} is algebraic, in order to compute the minimal primes and the degree of $J_\succ$ we do not have to do any algebraic computations since there is a beautiful combinatorial description of the minimal primes of $I^{(1)}_\succ$ and $I^{(2)}_\succ$ which gives rise to a combinatorial description of the minimal primes of $J_\succ$. 

\begin{definition} \label{defn:non-intersecting paths}
 For $\ell=1, \ldots, r_1$, let $P_\ell$ be 
a path on the flattening matrix $\psi^{(1)}$ that connects the $\ell$th entry on the last row  to the $\ell$th entry on the last column of this matrix while the path makes only "up" or "right" steps. We call a collection of the paths $(P_1, P_2, \ldots, P_{r_1})$ non-intersecting if no pair of paths contain the same variable. The minimal primes of $I^{(1)}_\succ$ are in bijection with the set of all non-intersecting collection of paths \cite[Proposition 4.4.1]{BCRV22}. In fact, the minimal prime corresponding to a non-intersecting collection of paths $(P_1, \ldots, P_{r_1})$  is
 $M = \langle \psi_{ijk}\, : \, \psi_{ijk} \not \in \cup_{\ell=1}^{r_1} P_\ell \rangle$. Similarly, for $\ell=1,\ldots, r_2$, let $Q_\ell$ be 
a path on the flattening matrix $\psi^{(2)}$ that connects the $\ell$th entry on the last row  to the $\ell$th entry on the last column of this matrix with only "up" or "right" steps. Non-intersecting collection of paths $(Q_1,\ldots, Q_{r_2})$ correspond bijectively to minimal primes of $I^{(2)}_\succ$ where such a collection gives the minimal prime $N = \langle \psi_{ijk}\, : \, \psi_{ijk} \not \in  \cup_{\ell=1}^{r_2} Q_\ell \rangle$.
\end{definition}
The following corollary allows one to compute the degree of $\mathrm{TT}_{\mathbf{d}, \mathbf{r}}$ in the order $3$ case purely combinatorially. 
\begin{corollary}
 The minimal primes of $J_\succ$ are of the form 
 $$ \langle \psi_{ijk} \, : \, \psi_{ijk} \not \in (\cup_{\ell=1}^{r_1} P_\ell) \cap  (\cup_{\ell=1}^{r_2} Q_\ell) \mbox{  with  } |(\cup_{\ell=1}^{r_1} P_\ell) \cap  (\cup_{\ell=1}^{r_2} Q_\ell)| = \dim(J) \rangle$$
 where $\dim(J)= r_1d_1 + r_1r_2d_2 +r_2d_3-r_1^2-r_2^2$ and $(P_1, \ldots, P_{r_1})$ and $(Q_1, \ldots, Q_{r_2})$ are non-intersecting collections of paths on $\psi^{(1)}$ and $\psi^{(2)}$, respectively. 
\end{corollary}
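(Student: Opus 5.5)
The corollary follows from Proposition \ref{prop:degree-of-order-3} once the minimal primes $M$ of $I^{(1)}_\succ$ and $N$ of $I^{(2)}_\succ$ are replaced by their path descriptions from Definition \ref{defn:non-intersecting paths}. By that definition (via \cite[Proposition 4.4.1]{BCRV22}), every minimal prime $M$ of $I^{(1)}_\succ$ is $M=\langle \psi_{ijk} : \psi_{ijk}\notin \mathcal P\rangle$ with $\mathcal P=\cup_{\ell=1}^{r_1}P_\ell$ for a non-intersecting family $(P_1,\ldots,P_{r_1})$ on $\psi^{(1)}$. Likewise $N=\langle \psi_{ijk} : \psi_{ijk}\notin \mathcal Q\rangle$ with $\mathcal Q=\cup_{\ell=1}^{r_2}Q_\ell$ for a non-intersecting family on $\psi^{(2)}$. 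The one identification needed is that $\psi^{(1)}$ and $\psi^{(2)}$ are matrices in the same set of variables $\{\psi_{ijk}\}$, each variable appearing exactly once in each. So $\mathcal P$ and $\mathcal Q$ can both be read as subsets of this common variable set.

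Next I compute $M+N$. Each of $M$ and $N$ is generated by variables, so $M+N$ is the prime generated by the variables outside $\mathcal P$ together with those outside $\mathcal Q$. That is, $M+N$ is generated by the complement of $\mathcal P\cap\mathcal Q$:
\[
M+N=\langle \psi_{ijk} : \psi_{ijk}\notin \mathcal P\cap \mathcal Q\rangle .
\]
The Krull dimension of $\mathbb C[\psi]/(M+N)$ is the number of variables left free, which is $|\mathcal P\cap\mathcal Q|$. So the condition $\dim(M+N)=\dim(J)$ in Proposition \ref{prop:degree-of-order-3} becomes $|\mathcal P\cap\mathcal Q|=\dim(J)=r_1d_1+r_1r_2d_2+r_2d_3-r_1^2-r_2^2$.

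Substituting these descriptions into the set of minimal primes given in Proposition \ref{prop:degree-of-order-3} gives exactly the claimed form. The only point needing care is the dimension convention. If $\dim(J)$ denotes the affine cone dimension (the dimension of $\mathbb C[\psi]/J_\succ$), then the count $|\mathcal P\cap\mathcal Q|$ must be compared with it directly. I would check this on a small case, for example $r_1=r_2=1$: there $\mathrm{TT}_{\mathbf d,\mathbf r}$ is the Segre variety with affine dimension $d_1+d_2+d_3-2$, which agrees with the formula. Beyond this, the argument is a direct translation and I do not expect a genuine obstacle.
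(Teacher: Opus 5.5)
Your proposal is correct and is essentially the paper's argument: the paper gives no separate proof, and the corollary is exactly what you do. You substitute the path descriptions of $M$ and $N$ from Definition~\ref{defn:non-intersecting paths} into Proposition~\ref{prop:degree-of-order-3}, observe that $M+N$ is generated by the variables outside $\mathcal P\cap\mathcal Q$, and note that it has Krull dimension $|\mathcal P\cap\mathcal Q|$; your check with $r_1=r_2=1$ correctly confirms that $\dim(J)$ there is the affine (Krull) dimension.
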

In Table \ref{tab:tt-degrees} we compute the degrees of some  tensor train varieties of order $3$ with a {\tt Julia} code, as provided in \cite{TensorTrainDegrees}. The data for $(r_1, r_2) = (r,1)$ or $(r_1, r_2) = (1,r)$ is not displayed since there is a formula in these cases. 

\begin{proposition}
Let $\mathrm{TT}_{\mathbf{d}, \mathbf{r}}$ be an order $3$ tensor train variety with $\mathbf{d} =(d_1, d_2, d_3)$ and $(r_1,r_2) = (r,1)$ or $(r_1, r_2) = (1, r)$.  
 Then
 $$ \deg(\mathrm{TT}_{\mathbf{d}, \mathbf{r}}) = \binom{(d_1+d_2-r)r+d_3-2}{d_3-1}\prod_{i=1}^r \frac{(d_1+d_2-2r+i-1)! (i-1)!}{(d_1-i)! (d_2-i)!}$$
 or respectively
 $$ \deg(\mathrm{TT}_{\mathbf{d}, \mathbf{r}}) = \binom{(d_2+d_3-r)r+d_1-2}{d_1-1}\prod_{i=1}^r \frac{(d_2+d_3-2r+i-1)! (i-1)!}{(d_2-i)! (d_3-i)!}.$$
\end{proposition}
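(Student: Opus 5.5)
The plan is to identify the variety geometrically as a Segre product of a classical determinantal variety with a projective space, and then multiply known degrees. By the symmetry $(d_1,d_2,d_3)\leftrightarrow(d_3,d_2,d_1)$, which reverses the tensor train and swaps $\psi^{(1)}$ with the transpose of $\psi^{(2)}$, it suffices to treat $(r_1,r_2)=(r,1)$; the second formula then follows by relabeling.

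\emph{Step 1: identify the variety.} By Corollary \ref{cor: TT flattening}, $[\psi]\in\mathrm{TT}_{\mathbf d,\mathbf r}$ if and only if $\rank(\psi^{(2)})\le 1$ and $\rank(\psi^{(1)})\le r$. The condition $\rank(\psi^{(2)})\le1$ on the $(d_1d_2)\times d_3$ flattening means exactly $\psi=M\otimes w$ with $M\in\CC^{d_1}\otimes\CC^{d_2}$ and $w\in\CC^{d_3}$, both nonzero. For such $\psi$, the $d_1\times(d_2d_3)$ flattening $\psi^{(1)}$ is the block row $(w_1M\mid\cdots\mid w_{d_3}M)$. Since some $w_k\neq0$, its rank equals $\rank(M)$. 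Hence
\[
\mathrm{TT}_{\mathbf d,\mathbf r}=\mathrm{Seg}\big(D_r\times\PP^{d_3-1}\big),
\]
where $D_r\subseteq\PP(\CC^{d_1}\otimes\CC^{d_2})$ is the variety of $d_1\times d_2$ matrices of rank at most $r$. This is an equality of irreducible reduced varieties, which is all the degree requires.

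\emph{Step 2: degree of a Segre product.} For a projective variety $X$ of dimension $a$ and $\PP^m$, the Hilbert function of $\mathrm{Seg}(X\times\PP^m)$ is $h_X(t)\binom{t+m}{m}$. Comparing leading coefficients gives
\[
\deg\mathrm{Seg}(X\times\PP^m)=\binom{a+m}{m}\deg X.
\]
Take $X=D_r$ with $a=\dim D_r=r(d_1+d_2-r)-1$ and $m=d_3-1$. The binomial factor becomes $\binom{(d_1+d_2-r)r+d_3-2}{d_3-1}$, as claimed.

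\emph{Step 3: degree of $D_r$.} I will quote the classical formula for the degree of the determinantal variety of $d_1\times d_2$ matrices of rank at most $r$ (Giambelli–Harris, Herzog–Trung). Written as $\prod_{i=1}^{r}\frac{(d_1+d_2-2r+i-1)!\,(i-1)!}{(d_1-i)!\,(d_2-i)!}$, it is one of its standard forms. As a consistency check, at $r=1$ it reduces to $\binom{d_1+d_2-2}{d_1-1}$, the degree of the Segre variety $\PP^{d_1-1}\times\PP^{d_2-1}$.

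The main obstacle is Step 3: matching the product in the statement with whichever standard form of the determinantal degree one cites. The usual forms are products over $i=0,\dots,d_2-r-1$ of ratios $\frac{(d_1+i)!\,i!}{(r+i)!\,(d_1-r+i)!}$. I would reconcile these by a telescoping manipulation of factorials, or by counting nonintersecting lattice paths as in Definition \ref{defn:non-intersecting paths} via Gessel–Viennot. Steps 1 and 2 are routine.
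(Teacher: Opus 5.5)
Your proposal is correct and follows essentially the same route as the paper. Both identify $\mathrm{TT}_{\mathbf d,\mathbf r}$ with the Segre embedding of the rank-$\le r$ determinantal variety times $\PP^{d_3-1}$, apply $\deg\mathrm{Seg}(W_1\times W_2)=\binom{k+\ell}{\ell}\deg W_1\deg W_2$, and quote the determinantal degree. The only differences are that you derive the Segre identification from Corollary \ref{cor: TT flattening} and the degree formula from Hilbert functions, where the paper cites \cite[Lemma 5.6]{BFHP25} and the general formula; the Step 3 reconciliation you flag is also not a real obstacle. The paper cites the product in exactly the stated form from \cite[Theorem 4.4.2]{BCRV22}, and rewriting both your $\prod_{i=0}^{d_2-r-1}$ form and the stated $\prod_{i=1}^{r}$ form in terms of superfactorials $\prod_{j<k} j!$ shows they are literally the same expression.
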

\begin{proof}
We treat the case $(r_1, r_2) = (r,1)$ since a symmetric argument applies to the other case. By \cite[Lemma 5.6]{BFHP25}, $\mathrm{TT}_{\mathbf{d}, \mathbf{r}}$ is equal to the Segre embedding of 
$V(d_1, d_2; r) \times \PP^{d_3-1}$ where $V(d_1,d_2;r) \subseteq \PP^{d_1d_2-1}$ is the determinantal variety of all $d_1 \times d_2$ matrices of rank $\leq r$. It is known that 
$\dim(V(d_1, d_2;r)) = (d_1+d_2-r)r-1$ \cite[Theorem 3.4.6]{BCRV22} and 
$\deg(V(d_1,d_2;r)) = \prod_{i=1}^r \frac{(d_1+d_2-2r+i-1)! (i-1)!}{(d_1-i)! (d_2-i)!}$ \cite[Theorem 4.4.2]{BCRV22}.  In general, 
for projective varieties $W_1 \subset \PP^{n_1-1}$
and $W_2 \subset \PP^{n_2-1}$ with $\dim(W_1) = k$
and $\dim(W_2)=\ell$, the degree of the Segre embedding of $W_1 \times W_2$ in $\PP^{n_1n_2-1}$ 
is equal to $\binom{k+\ell}{\ell} \deg(W_1)\deg(W_2)$. We use this with $\dim(\PP^{d_3-1})=d_3-1$ and 
$\deg(\PP^{d_3-1})=1$ to obtain the desired formula.  
\end{proof}

\begin{table}[h!]
\centering
\scriptsize
\begin{minipage}[t]{0.48\textwidth}
\centering
\begin{tabular}{@{}ccrrr@{}}
\toprule
$\mathbf{d}=(d_1,d_2,d_3)$ & $\mathbf{r}=(r_1,r_2)$ & $\dim$ & degree & time (s) \\
\midrule
$(3,3,3)$ & $(2,2)$ & 16 & 306       & 0.024 \\
$(3,4,3)$ & $(2,2)$ & 20 & 990       & 0.010 \\
$(3,3,4)$ & $(2,3)$ & 23 & 984       & 0.018 \\
$(3,6,3)$ & $(2,2)$ & 28 & 5{,}103   & 0.057 \\
$(4,2,4)$ & $(2,2)$ & 16 & 1{,}830   & 0.203 \\
$(4,2,4)$ & $(2,3)$ & 19 & 1{,}008   & 0.043 \\
$(4,2,4)$ & $(3,3)$ & 24 & 652       & 0.023 \\
$(4,4,3)$ & $(2,2)$ & 22 & 14{,}640  & 0.432 \\
$(4,4,3)$ & $(3,2)$ & 29 & 4{,}680   & 0.101 \\
\bottomrule
\end{tabular}
\end{minipage}
\hfill
\begin{minipage}[t]{0.48\textwidth}
\centering
\begin{tabular}{@{}ccrrr@{}}
\toprule
$\mathbf{d}=(d_1,d_2,d_3)$ & $\mathbf{r}=(r_1,r_2)$ & $\dim$ & degree & time (s) \\
\midrule
$(4,3,4)$ & $(2,2)$ & 20 & 32{,}316  & 5.579 \\
$(4,3,4)$ & $(2,3)$ & 25 & 19{,}584  & 0.687 \\
$(4,3,4)$ & $(3,3)$ & 33 & 9{,}712   & 0.186 \\
$(3,4,5)$ & $(2,2)$ & 24 & 132{,}660 & 8.697 \\
$(3,4,5)$ & $(2,3)$ & 32 & 121{,}380 & 10.019 \\
$(3,4,5)$ & $(2,4)$ & 38 & 10{,}650  & 2.664 \\
$(4,4,4)$ & $(2,2)$ & 24 & 261{,}560 & 61.593 \\
$(4,4,4)$ & $(2,3)$ & 31 & 151{,}440 & 6.200 \\
$(4,4,4)$ & $(3,3)$ & 42 & 61{,}520  & 1.368 \\
\bottomrule
\end{tabular}
\end{minipage}
\caption{Degrees of order 3 tensor train varieties $\mathrm{TT}_{\mathbf{d},\mathbf{r}}$, computed via the non-intersecting path count of Proposition~\ref{prop:degree-of-order-3}, with running times.}
\label{tab:tt-degrees}
\end{table}

\bibliographystyle{alpha}
\bibliography{bibliography}

\noindent{\bf Authors' addresses:}
\smallskip
\small

\noindent Serkan Hoşten,
San Francisco State University
\hfill {\tt serkan@sfsu.edu}

\noindent Niharika Chakrabarty Paul, Max Planck Institute, Leipzig 
\hfill {\tt niharika.paul@mis.mpg.de}

\noindent Otto T.~P.~Schmidt, 
Max Planck Institute, Leipzig
\hfill {\tt otto.schmidt@mis.mpg.de}

\noindent Dmitry Skurt,
San Francisco State University
\hfill {\tt dskurt@sfsu.edu}
\end{document}